\newcommand{\PP}{\mathbb{P}}
\newcommand{\ZZ}{\mathbb{Z}}
\newcommand{\Ptre}{\PP^3}
\newcommand{\Pfour}{\PP^4}
\newcommand{\Pn}{\PP^n}
\newcommand{\OO}{\mathcal{O}}
\newcommand{\OPuno}{\OO_{\PP^1}}
\newcommand{\Odue}{\OO_{Q_2}}
\newcommand{\Otre}{\OO_{Q_3}}
\newcommand{\On}{\OO_{Q_n}}
\newcommand{\OD}{\OO_D}
\newcommand{\EH}{E\vert_H}
\newcommand{\ED}{E\vert_D}
\newcommand{\OH}{\mathcal{O}_{H}}
\newcommand{\shI}{\mathcal{I}}
\newcommand{\Ext}{\mathrm{Ext}}
\newcommand{\mm}{\mathfrak{m}}
\newcommand{\CH}{\mathrm{CH}}
\newcommand{\N}{\mathcal{N}}
\newcommand{\F}{\mathcal{F}}
\newcommand{\Ss}{\mathcal{S}}
\newcommand{\coker}{\mathrm{coker}\kern.5pt}
\theoremstyle{plain}
\newtheorem{theorem}{Theorem}[section]
\newtheorem{corollary}[theorem]{Corollary}
\newtheorem{proposition}[theorem]{Proposition}
\newtheorem{lemma}[theorem]{Lemma}
\newtheorem{thm}{Theorem}
\newtheorem*{theo}{Theorem}
\theoremstyle{definition}
\newtheorem{definition}[theorem]{Definition}
\newtheorem{remark}[theorem]{Remark}
\begin{document}

\title{On Buchsbaum bundles on quadric hypersurfaces}

\author{E. Ballico \and F. Malaspina \and P. Valabrega \and M. Valenzano
\thanks{The paper was written while all authors were members of INdAM-GNSAGA. \hfill\break Lavoro eseguito con il supporto del progetto PRIN \lq\lq Geometria delle variet\`a algebriche e dei loro spazi di moduli\rq\rq, cofinanziato dal MIUR (cofin 2008).}}

\date{}

\maketitle

\begin{abstract}
Let $E$ be an indecomposable rank two vector bundle on the projective space $\PP^n, n \ge 3$, over an algebraically closed field of characteristic zero.  It is well known that $E$ is arithmetically Buchsbaum if and only if  $n=3$ and $E$ is a null-correlation bundle. 
In the present paper we establish an analogous result for rank two indecomposable arithmetically Buchsbaum vector bundles on the smooth quadric hypersurface $Q_n\subset\PP^{n+1}$, $n\ge 3$. We give in fact a full classification and prove that   $n$ must be at most $5$. 
As to $k$-Buchsbaum rank two vector bundles on $Q_3$, $k\ge2$, we prove two boundedness results.
\\[5pt]
\textbf{Keywords:} arithmetically Buchsbaum rank two vector bundles, smooth quadric hypersurfaces.
\\[5pt]
\textbf{MSC\,2010:} 14F05.
\end{abstract}

\section{Introduction}

Many papers have been written on $k$-Buchsbaum indecomposable rank two vector bundles on projective $n$-spaces, $n\ge3$, (see for instance \cite{Chang}, \cite{es}, \cite{EF}, \cite{els},  \cite{mkr}). 
\\
We recall that a vector bundle is called \emph{arithmetically Cohen-Macaulay} (i.e.\ 0-Buchsbaum) if it has no intermediate cohomology, while it is called \emph{arithmetically Buchsbaum} (i.e.\ 1-Buchsbaum) if it has all the intermediate cohomology modules with trivial structure. Moreover, we say that a bundle is \emph{properly} arithmetically Buchsbaum if it is so, but it is not arithmetically Cohen-Macaulay.
The following  result about rank two vector bundles on a projective space is well-known (see \cite{Chang} and \cite{EF}):

\begin{theo}
Let $E$ be an arithmetically Buchsbaum, normalized, rank 2 vector bundle on the projective space $\Pn$, $n\ge3$. Then $E$ is one of the following:
\begin{enumerate}
\item $n\ge3\colon$ $E$ is a split bundle;
\item $n=3\colon$ $E$ is stable with $c_1=0$, $c_2=1$, i.e.\ $E$ is a null-correlation bundle.
\end{enumerate}
\end{theo}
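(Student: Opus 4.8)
The plan is to exploit the Serre correspondence: pass from $E$ to the codimension-two subvariety cut out by a section of a suitable twist of $E$, reinterpret the arithmetically Buchsbaum condition as a pair of $\mm$-annihilation conditions on that subvariety, classify, and reduce the case $n\ge4$ to $n=3$ by restriction to a general $\Ptre$. First I would normalise $E$ so that $c_1(E)\in\{-1,0\}$, and recall that for a rank two bundle $E^{\vee}\cong E(-c_1(E))$, so that Serre duality on $\Pn$ gives a reflection isomorphism between $H^{i}_{*}(E)$ and $H^{n-i}_{*}(E)$ (with a degree shift depending on $n$ and $c_1(E)$). In particular $H^{1}_{*}(E)=0$ forces $H^{n-1}_{*}(E)=0$; when $n=3$ this says $E$ has no intermediate cohomology at all, and Horrocks' splitting criterion lands us in case (1). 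From now on $H^{1}_{*}(E)\neq0$ and, since $E$ is arithmetically Buchsbaum, every module $H^{i}_{*}(E)$, $1\le i\le n-1$, is nonzero, of finite length, and killed by $\mm$ --- hence a direct sum of shifted copies of the residue field.

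The heart of the argument is the case $n=3$. Choosing $m$ with $H^{0}(E(m))\neq0$, a general section of $E(m)$ vanishes along a purely one-dimensional, locally complete intersection curve $C\subset\Ptre$, giving the Serre extension $0\to\OO\to E(m)\to\shI_{C}(c_1(E)+2m)\to0$. Up to twist this identifies $H^{1}_{*}(E)$ with the Rao module $H^{1}_{*}(\shI_{C})$ of $C$ and $H^{2}_{*}(E)$ with $H^{1}_{*}(\OO_{C})\cong H^{2}_{*}(\shI_{C})$, so the hypothesis says precisely that both the Rao module of $C$ and $H^{1}_{*}(\OO_{C})$ are annihilated by $\mm$. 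The goal is to show that, together with local freeness of $E$ (which forces the extension class to be locally a generator of the relevant $\Ext^{1}$) and with the normalisation and indecomposability of $E$, this pins $C$ down to a disjoint union of two lines and $E$ to the null-correlation bundle $\N$ with $c_1=0$, $c_2=1$, i.e.\ case (2). Concretely, I expect the local-freeness condition to force the Rao module to be cyclic, which together with $\mm$-triviality leaves $H^{1}_{*}(\shI_{C})\cong k$ concentrated in a single degree; by Rao's theorem $C$ then lies in the even liaison class of two skew lines, and scanning the degrees and genera of such curves compatible with $E$ being normalised of rank two isolates $C$ itself, hence $E=\N$.

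For $n\ge4$ I would restrict to a general linear subspace $\Lambda\cong\Ptre$. The sequences $0\to E(t-1)\to E(t)\to E|_{H}(t)\to0$, combined with the vanishing of multiplication by a linear form on every intermediate $H^{i}_{*}(E)$, give short exact sequences $0\to H^{i}_{*}(E)\to H^{i}_{*}(E|_{H})\to H^{i+1}_{*}(E)(-1)\to0$ for $1\le i\le n-2$; using the reflection symmetry one checks that the resulting extensions stay $\mm$-trivial, so after $n-3$ steps $E|_{\Lambda}$ is arithmetically Buchsbaum on $\Ptre$, hence split or a twist of $\N$ by the previous step. The second alternative is ruled out: since a twist of $\N$ has even first Chern class it can occur only when $c_1(E)=0$, and then $E$ has $c_1=0$, $c_2=1$ on $\Pn$; the zero locus of a section of a suitable twist of $E$ would be a codimension-two locally complete intersection subvariety of $\Pn$ of small degree whose general $\Ptre$-section is a pair of skew lines, and such a subvariety is forced to be degenerate, which makes $E$ an extension of line bundles --- hence split, because $\Ext^{1}(\OO(a),\OO(b))=0$ on $\Pn$ --- contradicting indecomposability. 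Therefore $E|_{\Lambda}$ is split, so reading the restriction sequences upward shows $E$ has no intermediate cohomology, and $E$ is split: case (1).

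The step I expect to be the main obstacle is the $n=3$ classification above: extracting enough rigidity from the two $\mm$-annihilation conditions --- in particular excluding the \emph{a priori} possibilities in which the Rao module is $k^{r}$ or has generators in several degrees --- and then eliminating every curve other than a pair of skew lines using only the rank-two normalisation and indecomposability of $E$. A lesser but nontrivial point, which is precisely where the reflection symmetry $E^{\vee}\cong E(-c_1(E))$ is needed, is the verification that restriction to a general $\Ptre$ genuinely preserves the arithmetically Buchsbaum property, rather than only the weaker property of being $2$-Buchsbaum.
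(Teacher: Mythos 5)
First, a point of reference: the paper does not prove this statement at all --- it is quoted in the introduction as a known result with a pointer to \cite{Chang} and \cite{EF} --- so the only internal comparison available is with the paper's proof of the quadric analogue (Theorem~\ref{aBonQ3}), whose strategy is genuinely different from yours: it first shows every properly arithmetically Buchsbaum bundle is stable, then bounds the first relevant level $a$ of $\EH$, then uses the restriction sequences to a hyperplane and to a line/conic together with Serre duality to show $h^1$ survives in a single degree, and only then invokes $\chi(E(1))\ge 0$ and the Serre correspondence to list the curves. Your plan inverts this order, and that is where it breaks.

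The genuine gap is the pivotal claim in your $n=3$ step that local freeness of $E$ forces the Rao module $H^1_*(\shI_C)$ to be cyclic, hence (being killed by $\mm$) isomorphic to $k$ in a single degree. Local freeness only constrains the extension class in $\Ext^1(\shI_C(\cdot),\OO)\cong H^0(\omega_C(\cdot))$ to generate $\omega_C$ at every point of $C$; it imposes nothing on the module structure of $H^1_*(\shI_C)$, and the claim is false as a general principle. For instance, a 't~Hooft instanton bundle $E$ on $\Ptre$ with $c_1=0$, $c_2=2$ (Serre-associated to three skew lines) is locally free, yet $h^1(E(-2))=0$ and $h^1(E(-1))=2$, so $H^1_*(E)$ needs at least two minimal generators. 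In other words, nothing you have said excludes Rao modules of the form $k^r$ or spread over several degrees: that exclusion is exactly the content of the arithmetically Buchsbaum hypothesis and must be extracted from it by an argument of the paper's type (vanishing of the multiplication maps plus restriction to a hyperplane and a line, plus a non-vanishing theorem to locate the nonzero degree and to rule out non-stable indecomposable bundles --- a case your sketch never addresses). Until that is supplied, the $n=3$ classification, which you yourself identify as the heart of the matter, is not a proof but a restatement of the difficulty. Two lesser remarks on the $n\ge4$ reduction: (i) with the paper's definition of $k$-Buchsbaum (which quantifies over all general linear sections of dimension $\ge3$) the preservation of the property under restriction is automatic, whereas with the weaker definition you appear to use, your exact sequences $0\to H^i_*(E)\to H^i_*(\EH)\to H^{i+1}_*(E)(-1)\to 0$ only yield $\mm^2\cdot H^i_*(\EH)=0$, and the promised verification via ``reflection symmetry'' is not given; (ii) in ruling out $c_1=0$, $c_2=1$ on $\Pfour$, the degree-two surface is not ``forced to be degenerate'': the only degree-two surface in $\Pfour$ whose general hyperplane section is two skew lines is a pair of $2$-planes meeting in one point, which is non-degenerate --- the actual contradiction is that such a union fails to be a local complete intersection at the meeting point. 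The conclusion there is correct, but for a different reason than the one you state.
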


\noindent
Therefore, on projective spaces the only properly arithmetically Buchsbaum rank two vector bundles are the null-correlation bundles on $\Ptre$.

In the present paper we investigate arithmetically Buchsbaum rank two vector bundles on a smooth quadric hypersurface $Q_n\subset\PP^{n+1}$ and can give a full classification, getting the following:

\begin{thm}\label{main}
Let $E$ be an arithmetically Buchsbaum, normalized, rank 2 vector bundle on a smooth quadric hypersurface $Q_n$, $n\ge3$. Then $E$ is one of the following:
\\[5pt]
i) arithmetically Cohen-Macaulay bundles:
\begin{enumerate}
\item $n\ge3\colon$ $E$ is a split bundle;
\item $n=3\colon$ $E$ is stable with $c_1=-1$, $c_2=1$, i.e.\ $E$ is a spinor bundle;
\item $n=4\colon$ $E$ is stable with $c_1=-1$, $c_2=(1,0)$ or $(0,1)$, i.e.\ $E$ is a spinor bundle;
\end{enumerate}
ii) properly arithmetically Buchsbaum bundles:
\begin{enumerate}
\item[4.] $n=3\colon$ $E$ is stable with $c_1=-1$, $c_2=2$, i.e.\ $E$ is associated to two skew lines or to a double line;
\item[5.] $n=3\colon$ $E$ is stable with $c_1=-1$, $c_2=3$, and $H^0(Q_3,E(1))=0$, i.e.\ $E$ is associated to a smooth elliptic curve of degree $7$ in $Q_3\subset\Pfour$;
\item[6.] $n=4\colon$ $E$ is stable with $c_1=-1$, $c_2=(1,1)$, i.e.\ $E$ is the restriction of a Cayley bundle to $Q_4$;
\item[7.] $n=5\colon$ $E$ is stable with $c_1=-1$, $c_2=2$, i.e.\ $E$ is a Cayley bundle;
\item[8.] $n\ge6\colon$ no properly arithmetically Buchsbaum bundle exists.
\end{enumerate}
\end{thm}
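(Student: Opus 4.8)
The plan is to treat separately the arithmetically Cohen--Macaulay (ACM) case and the properly arithmetically Buchsbaum case, and in the latter to descend to quadrics of small dimension by cutting with hyperplanes. For the ACM part, $E$ has no intermediate cohomology, so after normalization the structure theory of ACM sheaves on a quadric hypersurface (matrix factorisations, or a cohomological d\'evissage) forces an indecomposable such $E$ to be a twist of a spinor bundle. Since the spinor bundle on $Q_n$ has rank $2^{\lfloor(n-1)/2\rfloor}$, rank two occurs only for $n=3$ (the spinor bundle, $c_1=-1$, $c_2=1$) and $n=4$ (the two spinor bundles, $c_1=-1$, $c_2=(1,0)$ or $(0,1)$); for $n\ge5$ every spinor bundle has rank $\ge4$, so the only rank two ACM bundles are split. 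Riemann--Roch then pins down the Chern classes and gives items (1)--(3).

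For the properly Buchsbaum case, let $E$ be normalized of rank two. A preliminary step, combining stability with Riemann--Roch and Serre duality, rules out $c_1=0$; hence $c_1=-1$ and $E^\vee\cong E(1)$. Let $H\subset\PP^{n+1}$ be a general hyperplane, so $Q_{n-1}:=Q_n\cap H$ is a smooth quadric, and consider
\[
0\longrightarrow E(-1)\longrightarrow E\longrightarrow E\vert_{Q_{n-1}}\longrightarrow 0.
\]
Since the linear form cutting out $H$ acts as zero on each annihilated module $H^i_\ast(E)$ with $1\le i\le n-1$, the long exact cohomology sequence breaks into short exact sequences
\[
0\to H^j\!\big(E(t)\big)\to H^j\!\big(E\vert_{Q_{n-1}}(t)\big)\to H^{j+1}\!\big(E(t-1)\big)\to 0\qquad(1\le j\le n-2),
\]
and a diagram chase shows that $\mm^2$ annihilates the intermediate cohomology of $E\vert_{Q_{n-1}}$. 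Using $E^\vee\cong E(1)$ together with Serre duality on $Q_n$ to constrain the (now very short) cohomology table, one upgrades this to: $E\vert_{Q_{n-1}}$ is either ACM or \emph{properly} arithmetically Buchsbaum on $Q_{n-1}$. This yields a downward induction, so it suffices to classify on $Q_3$ and then, at each level, to decide which bundles lift from $Q_{n-1}$ to $Q_n$.

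On $Q_3$ I would invoke the Hartshorne--Serre correspondence: after a suitable twist a normalized stable rank two bundle with $c_1=-1$ is the bundle of a curve $C\subset Q_3\subset\Pfour$, and the arithmetically Buchsbaum property of $E$ is equivalent to the Hartshorne--Rao module of $C$ being killed by $\mm$, hence supported in a short interval of degrees; feeding this into the degree/genus restrictions for curves on $Q_3$ and into Riemann--Roch bounds $c_2\le3$, the extra condition $H^0(Q_3,E(1))=0$ appearing exactly at $c_2=3$. Identifying the bundles so obtained (two skew lines or a double line for $c_2=2$, a smooth elliptic septic for $c_2=3$) gives items 4 and 5, and nothing survives for $c_2\ge4$. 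Ascending with the restriction analysis above, the $c_2=2$ bundle lifts to the restriction to $Q_4$ of a Cayley bundle (item 6), which is itself the general hyperplane section of the Cayley bundle on $Q_5$ (item 7), while the elliptic septic bundle, and everything else, fails to lift to $Q_4$. Finally, a properly arithmetically Buchsbaum rank two bundle on $Q_n$ with $n\ge6$ would restrict, step by step, to a bundle on $Q_5$ that must be the Cayley bundle; one then shows the Cayley bundle on $Q_5$ admits no extension to a vector bundle on $Q_6$ — equivalently, that any rank two bundle on $Q_6$ carrying the Chern classes forced by the restriction has an inconsistent cohomology table — whence item 8.

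The step I expect to be the main obstacle is twofold: (i) sharpening ``$2$-Buchsbaum after restriction'' to ``ACM or properly Buchsbaum after restriction'', which is what makes the induction close, and (ii) the explicit non-extendability of the Cayley bundle beyond $Q_5$ (the obstruction living in an $H^1$ of $\mathcal{E}nd(E)$ twisted by $\OO(-1)$). Both reduce to pushing the finiteness coming from the Buchsbaum hypothesis, Serre duality, and Bogomolov-type and Kleiman-type stability inequalities until the possible cohomology tables become completely rigid; the curve-theoretic bound $c_2\le3$ on $Q_3$ is the remaining delicate point.
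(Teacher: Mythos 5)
Your overall architecture coincides with the paper's: classify the ACM case via the structure theorem for bundles without intermediate cohomology on quadrics, restrict a properly Buchsbaum bundle to a general $Q_3$, classify there, and then decide which $Q_3$-bundles lift, using Ottaviani's results on spinor and Cayley bundles for the positive lifts. But the two places you yourself flag as ``delicate'' are exactly where the paper does its real work, and your proposal does not supply the arguments. First, the bound $c_2\le3$ on $Q_3$ does not follow from the Rao module being ``supported in a short interval of degrees'': one needs $h^1(Q_3,E(t))=0$ for all $t\ne0$, since the inequality comes from $6-2c_2=\chi(E(1))=h^0(Q_3,E(1))\ge0$, which is useless if $h^1(Q_3,E(1))$ could be nonzero. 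The paper gets the degree-$0$ concentration by first proving every properly Buchsbaum bundle is stable (your sketch silently assumes stability, but non-stable non-split bundles must be excluded, which rests on a non-vanishing theorem for $h^1$), deducing $a=1$ from a non-vanishing result plus the Ein--Sols restriction theorem, and then restricting further to a general conic $D\cong\PP^1$, where $\ED\cong\OPuno(-1)^{\oplus 2}$ forces $h^1(Q_2,\EH(t))=0$ for $t\ge1$ and hence $h^1(Q_3,E(t))=0$ for $t\ge1$. Second, the claim that ``the elliptic septic bundle fails to lift to $Q_4$'' is asserted with no mechanism; the paper proves it by showing a putative extension $F$ would be $2$-regular by Castelnuovo--Mumford, so a general section of $F(2)$ vanishes on a degree-$7$ del Pezzo surface $S\subset Q_4$ with $\omega_S\cong\OO_S(-1)$ and $h^1(\shI_S(1))=0$, whence $h^0(S,\OO_S(1))=8>6=h^0(\OO_{Q_4}(1))$, a contradiction. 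Neither step is a routine consequence of the finiteness you invoke.

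A further structural point: your descending induction, with its ``upgrade from $2$-Buchsbaum after restriction to ACM or properly Buchsbaum,'' is solving a problem the paper's definition avoids. Here \emph{arithmetically Buchsbaum} is defined by requiring $\mm\cdot H^p_*(Q',E\vert_{Q'})=0$ for \emph{all} general linear sections $Q'$ of dimension $3\le q\le n$, so the restriction to a general $Q_3$ is Buchsbaum by hypothesis and no diagram chase or upgrade is needed. If you instead work with the weaker definition (annihilation only on $Q_n$ itself), the promotion from $\mm^2$-annihilation to $\mm$-annihilation of the restricted cohomology is a genuine unproved claim, and I see no reason it should hold in general; the paper's closing remark shows the authors are aware that the relation between the ambient and restricted Buchsbaum conditions is not automatic. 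Your Knörrer-type ACM analysis and the rank count for spinor bundles are fine, as is the appeal to Ottaviani for the non-extendability of the Cayley bundle beyond $Q_5$ (which is a quoted theorem, not something you need to reprove via $H^1$ of the twisted endomorphism bundle).
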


As for $k$-Buchsbaum bundles, $k\ge2$, on a quadric threefold $Q_3$, we prove two boundedness results for the second Chern class $c_2$, both in the stable and in the non-stable case.

\section{General facts and notation}

We work over an algebraically closed field $k$ of characteristic zero.

\subsection{Quadric hypersurfaces $Q_n \subset \PP^{n+1}$}

We denote by $Q_n$ any smooth quadric hypersurface in the projective space $\PP^{n+1}$, with $n\ge2$.
We recall some well known facts about quadric hypersurfaces. 
Firstly, for the first and second Chow groups of $Q_n$ there are natural isomorphisms
$$\CH^i(Q_n) \cong \ZZ \quad i=1,2$$
with the following exceptions:
$$\CH^1(Q_2) \cong \ZZ\oplus\ZZ\quad\text{and}\quad\CH^2(Q_5)\cong\ZZ\oplus\ZZ,$$
so an element of these particular Chow groups is identified with a pair $(a,b)$ of integers.
\\
Now, let $Q_3\subset\PP^4$ be a smooth quadric threefold. Take a general hyperplane section $Q_2$ and a general conic section $D$ of $Q_3$, i.e.\ 
$$Q_2 = Q_3 \cap K, \qquad D = Q_3 \cap K \cap K' = Q_2 \cap K',$$
where $K$ and $K'$ are two general hyperplanes in $\PP^4$.
We have the following two inclusion maps $D \lhook\!\xrightarrow{\;j\;} Q_2 \lhook\!\xrightarrow{\;i\;} Q_3$, so it holds, for the \lq\lq hyperplane section\rq\rq\ line bundles,
$$i^* \Otre(1) = \Odue(1,1)$$
and
$$j^* \Odue(1,1) = \OD(1) \simeq \OPuno(2),$$
since the irreducible conic $D$ is the isomorphic image through the 2-fold Veronese embedding of the projective line $\PP^1$.  
The above notation $\Odue(1,1)$ has the usual meaning
$$\Odue(1,1) \simeq \pi_1^*\OPuno(1) \otimes \pi_2^*\OPuno(1),$$
where $\pi_1$ and $\pi_2$ are the projections onto the two factors in the standard isomorphism
$Q_2 \cong \PP^1 \times \PP^1,$ 
and moreover, for every integer $t$,
$$\Odue(t,t) = \Odue(1,1)^{\otimes t} \simeq \pi_1^*\OPuno(t) \otimes \pi_2^*\OPuno(t).$$

\subsection{Rank 2 vector bundles on $Q_n$}

Let $E$ be a rank 2 vector bundle on a smooth quadric hypersurface $Q_n \subset \PP^{n+1}$ with $n\ge 3$.

\begin{definition}
The bundle $E$ is called \emph{normalized} if it has first Chern class $c_1\in\{0,-1\}$.
We define the \emph{first relevant level} of $E$ as the integer 
$$\alpha = \alpha(E) := \min\{t\in\ZZ \mid h^0(Q_n,E(t)) \ne 0\}.$$
Let $H$ be a general hyperplane section of $Q_n$. We set $\EH:=E\otimes \OH$, that is $\EH$ is the restriction of the bundle $E$ to the smooth subvariety $H \cong Q_{n-1}$.
So we define the first relevant level of the restricted bundle $\EH$ as the whole number
$$a = a(E) := \alpha(\EH) = \min\{t\in\ZZ \mid h^0(Q_{n-1},\EH(t)) \ne 0 \},$$
with the convention that, when $n=3$, $a$ is the first relevant level of $\EH$ with respect to the line bundle $\Odue(1,1)$.
\\
It is easy to see that for every vector bundle $E$ it holds: $a \le \alpha$.
\end{definition}

\begin{definition}
Let $E$ be a rank 2 vector bundle on $Q_n$ with first Chern class $c_1$ and first relevant level  $\alpha$. 
We say that $E$ is \emph{stable} if $2\alpha+c_1 >0$, or equivalently, if $\alpha >0$ when $E$ is normalized.
We say that $E$ is \emph{semistable} if $2\alpha+c_1 \ge0$, or equivalently, if $\alpha \ge -c_1$ when $E$ is normalized. 
Obviously every stable bundle is semistable. Conversely the only semistable bundles which are not stable are those with $c_1=\alpha=0$.
\\
We say that $E$ is \emph{non-stable} if $2\alpha+c_1\le 0$, that is if $\alpha\le 0$ when $E$ is normalized.
\end{definition}

\begin{definition}
We say that $E$ is an \emph{extendable bundle} or that it extends to a bundle on $Q_{n+1}$ if there exists a rank $2$ vector bundle $F$ on $Q_{n+1}$ such that $E=F\vert_{Q_n}$, where $Q_n\subset Q_{n+1}$ is a general hyperplane section of $Q_{n+1}$.
\end{definition}

\begin{definition}
We say that $E$ is a \emph{split bundle} if it is (isomorphic to) the direct sum of two line bundles, i.e.\ $E=\On(a) \oplus \On(b)$ for suitable integers $a$ and $b$. Obviously each split bundle is non-stable.
\end{definition}

\begin{definition}
Let $E$ be rank $2$ vector bundle on a smooth quadric $Q_n \subset \PP^{n+1}$, with $n\ge3$. We set 
$$R = \mathop\oplus_{t\ge0} H^0(Q_n,\On(t))\quad\text{and}\quad \mm = \mathop\oplus_{t>0} H^0(Q_n,\On(t)),$$
and also
$$H^i_*(Q_n,E) = \mathop\oplus_{t\in\ZZ} H^i(Q_n,E(t))\quad\text{for }i=0,\dots,n,$$
which are modules of finite length on the ring $R$. \\
We say that $E$ is \emph{$k$-Buchsbaum}, with $k\ge0$, if for all integers $p$, $q$ such that $1\le p\le q-1$ and $3\le q\le n$ it holds
$$\mm^k \cdot H^p_*(Q',E\vert_{Q'}) = 0,$$
where $Q'$ is a general $q$-dimensional linear section of $Q_n$, i.e.\ $Q'$ is a quadric hypersurface cut out on $Q_n$ by a general linear space $L\subset\PP^{n+1}$ of dimension $q+1$, that is $Q'=Q_n \cap L$.
\\
Obviously $E$ is $k$-Buchsbaum if and only if $E(t)$ is $k$-Buchsbaum for every $t\in\ZZ$. Moreover, if $E$ is $k$-Buchsbaum, then $E$ is $k'$-Buchsbaum for all $k' \ge k$.
So we say that $E$ is \emph{properly $k$-Buchsbaum} if it is $k$-Buchsbaum but not $(k-1)$-Buchsbaum.
\\
Notice that $E$ is $0$-Buchsbaum if and only if $E$ has no intermediate cohomology, i.e.\  $H^i(Q_n,E(t))=0$ for every $t\in\ZZ$ and $1\le i\le n-1$. Such a bundle is also called \emph{arithmetically Cohen-Macaulay}.
\\
Observe also that $E$ is $1$-Buchsbaum if and only if $E$ has every intermediate cohomology module with trivial structure over $R$. Such a bundle is also called  \emph{arithmetically Buchsbaum}.
\end{definition}

Now we recall some known facts.

\begin{theorem}[Castelnuovo-Mumford criterion]\label{CM}
Let $F$ be a coherent sheaf on $Q_n$ such that $H^i(Q_n,F(-i))=0$ for $i>0$. Then $F$ is generated by global sections and $H^i(Q_n,F(-i+j))=0$ for $i>0$, $j\ge0$.
\end{theorem}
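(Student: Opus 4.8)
The plan is to reduce the statement to the classical Castelnuovo--Mumford regularity theorem on the ambient projective space. Let $i\colon Q_n\hookrightarrow\PP^{n+1}$ be the closed embedding, so that $\On(1)=i^*\OO_{\PP^{n+1}}(1)$, and set $G:=i_*F$, a coherent sheaf on $\PP^{n+1}$. Since a closed immersion is an affine morphism, $R^qi_*(F(t))=0$ for all $q>0$ and all $t\in\ZZ$; combining this with the Leray spectral sequence and the projection formula $i_*(F(t))=i_*\bigl(F\otimes i^*\OO_{\PP^{n+1}}(t)\bigr)\cong G(t)$, we obtain natural isomorphisms
$$H^q(Q_n,F(t))\;\cong\;H^q(\PP^{n+1},G(t))\qquad\text{for all }q\ge0,\ t\in\ZZ.$$

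Under these identifications, the hypothesis $H^i(Q_n,F(-i))=0$ for $i>0$ says exactly that $G$ is $0$-regular on $\PP^{n+1}$ in the sense of Castelnuovo--Mumford. Then I would invoke Mumford's lemma on $\PP^{n+1}$: if a coherent sheaf $G$ is $0$-regular, then (a) $G$ is $m$-regular for every $m\ge0$, and (b) $G$ is generated by its global sections. Part (a) is obtained by induction on $m$ using a general hyperplane $H'\subset\PP^{n+1}$ and the exact sequences $0\to G(m-1)\to G(m)\to G(m)\vert_{H'}\to0$, together with $\dim H'<n+1$; part (b) follows from a similar hyperplane-section argument. From (a) we immediately get, back on $Q_n$,
$$H^i(Q_n,F(-i+j))\;\cong\;H^i(\PP^{n+1},G(-i+j))\;=\;0\qquad\text{for }i>0,\ j\ge0.$$

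It remains to transfer global generation from $G$ to $F$. Since $i$ is a closed immersion, $H^0(\PP^{n+1},G)=H^0(Q_n,F)$, and for each point $x\in Q_n$ the fibre $G\otimes k(i(x))$ is canonically identified with $F\otimes k(x)$, compatibly with the evaluation maps. By Nakayama's lemma the surjectivity of $H^0(\PP^{n+1},G)\otimes\OO_{\PP^{n+1}}\to G$ is equivalent to the surjectivity of all these fibre maps, and hence it forces $H^0(Q_n,F)\otimes\On\to F$ to be surjective as well, i.e.\ $F$ is globally generated.

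I do not expect a genuine obstacle here: the whole argument is a bookkeeping reduction to the well-known theorem on $\PP^{n+1}$. The only points requiring a little care are the compatibility of twists under $i_*$ (handled by the projection formula, using that $\On(1)$ is the restriction of $\OO_{\PP^{n+1}}(1)$) and the fibrewise translation of global generation. If instead one wanted a proof intrinsic to $Q_n$, the delicate step would be re-running Mumford's induction directly on the quadric, using general hyperplane sections $Q_{n-1}\subset Q_n$ and the sequences $0\to F(t-1)\to F(t)\to F(t)\vert_{Q_{n-1}}\to0$, together with a separate argument for global generation; but this merely reproves the imported statement and is not needed.
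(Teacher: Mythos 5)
Your argument is correct. The paper offers no proof of this statement at all --- it simply cites Hernandez--Sols \cite{HeS} --- so there is no internal argument to compare against; what you have written is the standard and complete reduction to Mumford's regularity lemma on the ambient projective space. All the transfer steps are sound: $i_*$ is exact for the closed immersion $i\colon Q_n\hookrightarrow\PP^{n+1}$, the projection formula with $\On(1)=i^*\OO_{\PP^{n+1}}(1)$ gives $H^q(Q_n,F(t))\cong H^q(\PP^{n+1},(i_*F)(t))$, so the hypothesis is precisely $0$-regularity of $i_*F$ on $\PP^{n+1}$; and global generation descends because $i_*F$ is supported on $Q_n$, its fibres at points of $Q_n$ coincide with those of $F$ (the $\OO_{\PP^{n+1}}$-module structure factoring through $\OO_{Q_n}$), and it vanishes elsewhere, so the Nakayama/fibrewise criterion applies verbatim. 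The only benefit of the alternative intrinsic route you mention at the end (re-running Mumford's induction on $Q_n$ with general hyperplane sections $Q_{n-1}$) would be independence from the ambient embedding, which is irrelevant here since the twist $\On(1)$ is by definition the restricted hyperplane class; your chosen reduction is the cleaner option and is presumably what \cite{HeS} does as well.
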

\begin{proof}
See \cite{HeS}.
\end{proof}

\begin{theorem}\label{acm}
Let $F$ be a vector bundle on $Q_n$, $n\ge3$. If $F$ is arithmetically Cohen-Macaulay, i.e.\ it has no intermediate cohomology, then $F$ is a direct sum of line bundles and twisted spinor bundles.
\end{theorem}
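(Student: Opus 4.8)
The plan is to turn the geometric statement into the classification of maximal Cohen--Macaulay modules over the homogeneous coordinate ring of $Q_n$, which is the point of view of Buchweitz--Eisenbud--Herzog and of Kn\"orrer. Write $S=k[x_0,\dots,x_{n+1}]$ and $A=S/(q)$, where $q$ is the nondegenerate quadratic form cutting out $Q_n$. If $F$ is a bundle on $Q_n$ with no intermediate cohomology, then $M:=H^0_*(Q_n,F)$ is a finitely generated graded $A$-module with $\mathrm{depth}_{\mm}M=n+1=\dim A$: indeed $H^0_{\mm}(M)=H^1_{\mm}(M)=0$ because $M$ is saturated, while for $2\le i\le n$ the local cohomology $H^i_{\mm}(M)$ is, up to an internal shift, the intermediate cohomology of $F$, which vanishes by hypothesis. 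Thus $M$ is MCM and locally free on the punctured spectrum, $F=\widetilde M$, and these two operations are mutually inverse. So it suffices to prove that every graded MCM $A$-module which is locally free on the punctured spectrum is a finite direct sum of shifts $A(t)$ and of degree-shifts of one or two fixed ``spinor'' modules.

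First I would reduce to matrix factorizations: by Eisenbud's theorem, (reduced) matrix factorizations $(\varphi,\psi)$ of $q$ over $S$ --- pairs of homogeneous matrices with $\varphi\psi=\psi\varphi=q\cdot\mathrm{Id}$ --- classify the non-free indecomposable graded MCM $A$-modules, via $M=\mathrm{coker}\,\varphi$. Next, the classical dictionary between quadratic forms and Clifford algebras reads such a matrix factorization of the degree-two form $q$ as a finite-dimensional $\ZZ/2$-graded module over the Clifford algebra $C=\mathrm{Cl}(q)$, the identity $\varphi\psi=q\cdot\mathrm{Id}$ being exactly the Clifford relation; this is an equivalence of categories. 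Since $k$ is algebraically closed and $q$ is nondegenerate, $C$ is semisimple --- a matrix algebra, or a product of two matrix algebras, according to the parity of $n+2$ --- so it has, up to isomorphism, very few (graded) simple modules. Unwinding this produces, up to equivalence and internal degree shift, exactly one nonsplit indecomposable matrix factorization of $q$ when $n$ is odd and exactly two when $n$ is even; a direct check (ranks $2^{\lfloor(n-1)/2\rfloor}$, Chern classes, or the defining linear resolution) identifies their sheafifications with the spinor bundle, respectively the two spinor bundles, of $Q_n$, while the free module $A$ sheafifies to $\OO_{Q_n}$. Finally, Krull--Schmidt for finitely generated graded $A$-modules, applied grading by grading, shows that $M$ --- hence $F=\widetilde M$ --- is a finite direct sum of shifted copies of $A$ and of the spinor module(s); sheafifying, $F$ is a direct sum of line bundles $\OO_{Q_n}(t)$ and twisted spinor bundles.

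The step I expect to be the real obstacle is the grading-sensitive version of the Clifford classification: one must track the internal $\ZZ$-grading through Eisenbud's correspondence and the Clifford-module equivalence to be sure nothing appears beyond degree-shifts of the spinor module(s), and one must correctly match the ``one versus two'' count (which is the parity-shift ambiguity on the Clifford side, i.e.\ $\mathrm{coker}\,\varphi$ versus $\mathrm{coker}\,\psi$) with the known number of spinor bundles and with the self-duality $S^\vee\cong S(1)$ on odd quadrics. Two further routine but necessary verifications are the depth/local-cohomology bookkeeping for $F\leftrightarrow M$ and the identification of the spinor module's sheafification with the spinor bundle in the usual sense. An alternative approach, closer to the hyperplane-section methods used later in this paper, would be an induction on $n$: restriction to a general hyperplane section carries an ACM bundle on $Q_n$ to an ACM bundle on $Q_{n-1}$ (from the standard restriction sequence and the vanishing of intermediate cohomology), so by the inductive hypothesis $F|_{Q_{n-1}}$ decomposes; one then lifts this decomposition to $Q_n$ using Kapranov's resolution of the diagonal on $Q_n$, whose building blocks are precisely the twists $\OO_{Q_n}(-t)$ and the spinor bundle(s), the obstacle there being the vanishing of the $\mathrm{Ext}^1$ groups needed to make the lifting well defined.
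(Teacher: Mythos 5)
The paper does not actually prove this statement: it is quoted as a known result with a pointer to \cite{Sols}, so there is no internal argument to compare yours against. What you have written is, in substance, the standard proof from the literature (Kn\"orrer, and Buchweitz--Eisenbud--Herzog, ``Cohen--Macaulay modules on quadrics''): pass from ACM bundles to graded MCM modules over $A=S/(q)$ via $M=H^0_*(Q_n,F)$, invoke Eisenbud's equivalence with reduced graded matrix factorizations of $q$, identify those with $\ZZ/2$-graded Clifford modules, and use semisimplicity of the Clifford algebra over an algebraically closed field plus graded Krull--Schmidt. That outline is correct, and your local-cohomology bookkeeping ($H^0_{\mm}M=H^1_{\mm}M=0$ by saturation, $H^i_{\mm}M$ for $2\le i\le n$ being the intermediate cohomology) is right. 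The one step you should not wave at is the passage from ``reduced graded matrix factorization of a degree-two form'' to ``Clifford module'': the Clifford dictionary applies to \emph{linear} matrix factorizations, i.e.\ to MCM modules generated in a single internal degree with linear differential, and one must first prove that every indecomposable non-free graded MCM module over the quadric ring is of this form (equivalently, that its minimal resolution is linear). This is exactly the content of the Buchweitz--Eisenbud--Herzog argument and is where the nondegeneracy of $q$ is used in an essential way; you correctly flag the grading as the delicate point, but it deserves to be named as this concrete linearity statement rather than only as ``tracking the internal grading.'' Your alternative sketch by induction on $n$ via the hyperplane restriction and the spinor-based resolution of the diagonal is also viable and is closer to the approach of \cite{Ot2} and to the methods this paper uses elsewhere; the trade-off is that the Clifford route gives the clean count of indecomposables (one spinor module for $n$ odd, two for $n$ even) for free from semisimplicity, while the inductive route must control extension groups to lift the decomposition.
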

\begin{proof}
See \cite{Sols}.
\end{proof}

\begin{remark}
For an account of spinor bundles on quadrics see \cite{Ot2} and \cite{Sols}.
\end{remark}

\begin{definition}\label{Cb1}
As introduced in \cite{Ott}, a \emph{Cayley bundle} $C$ on a smooth quadric hypersurface  $Q_5$ is a bundle arising from the following two exact sequences
$$0 \to \On \to S^\ast \to G \to 0$$
$$0 \to \On \to G^\ast(1) \to C(1) \to 0$$
where $S$ is the spinor bundle on $Q_5$ and $G$ is a stable rank 3 vector bundle with Chern classes $c_1=c_2=c_3=2$ defined by the generic section of $S^\ast$, and moreover $C$ is defined by a nowhere vanishing section of $G^\ast(1)$.
\end{definition}

\begin{theorem}\label{Cb2}
Each stable rank 2 vector bundle on $Q_5$ with Chern classes $c_1=-1$ and $c_2=1$ is a Cayley bundle, and these bundles do not extend to $Q_6$. Moreover, the zero locus of a general global section of $C(2)$, $C$ a Cayley bundle, is (isomorphic to) the complete flag threefold $F(0,1,2)$ of linear elements of $\PP^2$.
\end{theorem}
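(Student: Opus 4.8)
All three assertions are Ottaviani's \cite{Ott}; the plan is to recover each in turn, the third being the hard one.

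\emph{(i) The classification.} Let $E$ be stable of rank $2$ on $Q_5$ with $c_1=-1$ and $c_2=1$, so that $E^\vee\cong E(1)$, and, since $\omega_{Q_5}=\OO_{Q_5}(-5)$, Serre duality gives $h^i(Q_5,E(t))=h^{5-i}(Q_5,E(-4-t))$. First I would pin down the cohomology table of $\{E(t)\}_{t\in\ZZ}$: Hirzebruch--Riemann--Roch on $Q_5$ supplies the Euler characteristics, stability forces $H^0(Q_5,E)=0$ (hence $H^5(Q_5,E(t))=0$ for $t\ge-4$ by the symmetry), and the hyperplane-restriction sequences together with the cohomology of the (arithmetically Cohen--Macaulay, by \ref{acm}) spinor bundle $S$ confine the intermediate cohomology to a short band of twists with small prescribed dimensions. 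The decisive step is then to reconstruct the two sequences of Definition~\ref{Cb1}: a Riemann--Roch count gives $\mathrm{Hom}(S,E)\neq0$ and the vanishing of the higher cohomology of $E\otimes S^\vee$, or, more efficiently, one feeds the cohomology table into the Beilinson-type spectral sequence attached to the exceptional collection $\bigl(S,\OO_{Q_5},\OO_{Q_5}(1),\dots,\OO_{Q_5}(4)\bigr)$ on $Q_5$. After bookkeeping this yields the sequence
\[
0\longrightarrow E\longrightarrow G(-1)\longrightarrow\On\longrightarrow 0
\]
(the dual of the second sequence of Definition~\ref{Cb1}), with $G$ a rank $3$ bundle of Chern classes $c_1=c_2=c_3=2$ which one checks is stable, together with the surjection $S^\vee\to G$ with kernel $\On$. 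Dualising recovers Definition~\ref{Cb1}, so $E$ is a Cayley bundle. I expect the last point — identifying $G$ with a quotient of $S^\vee$ — to be the delicate one, since it rests on the precise vanishings that force the Beilinson resolution to take the expected shape.

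\emph{(ii) The zero locus.} Let $C$ be a Cayley bundle; $C(2)$ is globally generated (this follows from the second sequence of Definition~\ref{Cb1}), and since $c_2(C(2))=3h^2\neq0$ a general section vanishes on a smooth threefold $Y\subset Q_5$ of codimension $2$, with $\deg Y=\int_{Q_5}c_2(C(2))\cdot h^3=6$. Adjunction gives $\omega_Y=(\omega_{Q_5}\otimes\det C(2))\vert_Y=\OO_Y(-2)$, so $Y$ is a del Pezzo threefold of degree $6$, with fundamental divisor $\OO_Y(1)$ satisfying $h^0(\OO_Y(1))=8$. By the classification of del Pezzo threefolds, $Y$ is $\PP^1\times\PP^1\times\PP^1$ or $\PP(T_{\PP^2})\cong F(0,1,2)$. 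In either case $Y\subset\PP^6$ is a linear projection of the linearly normal embedding in $\PP^7$, which is an isomorphism onto its image only if the projection centre lies off the secant variety. But $\mathrm{Sec}(\PP^1\times\PP^1\times\PP^1)=\PP^7$ (the general $2\times2\times2$ tensor has rank $2$), whereas $\mathrm{Sec}(F(0,1,2))$ — contained in the hyperplane section, cut by $F(0,1,2)$, of the cubic hypersurface of singular $3\times3$ matrices — is proper in $\PP^7$. Therefore $Y\cong F(0,1,2)$.

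\emph{(iii) Non-extension to $Q_6$.} Any rank $2$ bundle $F$ on $Q_6$ with $F\vert_{Q_5}=C$ is stable — a destabilizing sub-line-bundle would give a nonzero section of $F(t)$ with $t\le0$, hence one of $C(t')$ with $t'\le0$, contradicting stability of $C$ — with $c_1=-1$ and $c_2=1$; so it suffices to show no stable rank $2$ bundle on $Q_6$ with these Chern classes exists. Here $F^\vee\cong F(1)$ and $\omega_{Q_6}=\OO_{Q_6}(-6)$. The first step is $H^0(Q_6,F(1))=0$: stability forbids divisorial zeros, so a nonzero section of $F(1)$ would vanish along a purely $2$-codimensional local complete intersection $Z$ of degree $\int_{Q_6}c_2(F(1))\cdot h^4=2$; since $Q_6$ contains no $\PP^4$, $Z$ is an irreducible quadric hypersurface in some $\PP^5\subset\PP^7$, and then the adjunction formula from $Z\subset Q_6$ (using $N_{Z/Q_6}=F(1)\vert_Z$) gives $\omega_Z=\OO_Z(-5)$ while hypersurface adjunction gives $\omega_Z=\OO_Z(-4)$ — impossible. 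Thus $\alpha(F)\ge2$, matching the value $\alpha(C)=2$ (the same argument on $Q_5$ gives $H^0(Q_5,C(1))=0$). To conclude, one compares the cohomology of $F$ with the explicitly known cohomology of its general hyperplane section $C$ (computed from the two sequences of Definition~\ref{Cb1}, where it has finite length and is annihilated by $\mm$), feeding the restriction sequences $0\to F(t-1)\to F(t)\to C(t)\to 0$ into Riemann--Roch on $Q_6$; the numerical incompatibility that surfaces — in essence Ottaviani's dimension count — shows $F$ cannot exist, so no Cayley bundle extends to $Q_6$. This final comparison is the main obstacle of the whole proof.
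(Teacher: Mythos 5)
The paper does not actually prove this statement: its ``proof'' is the single line ``See \cite{Ott}'', so there is no internal argument to compare yours with, and your plan has to be judged against what is genuinely needed. Parts (i) and (ii) are credible and essentially reproduce Ottaviani's route: the Beilinson-type spectral sequence over the exceptional collection containing the spinor bundle is the right tool for (i), and for (ii) the adjunction computation ($\deg Y=6$, $\omega_Y=\OO_Y(-2)$), the classification of del Pezzo threefolds of degree $6$, and the secant-variety argument ($\mathrm{Sec}(\PP^1\times\PP^1\times\PP^1)=\PP^7$, so that case admits no isomorphic projection into $\PP^6$) correctly single out $F(0,1,2)$.

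Part (iii) contains the genuine gap. After the correct (but, as it turns out, unnecessary) verification that $H^0(Q_6,F(1))=0$, you conclude by asserting that ``a numerical incompatibility surfaces'' when the restriction sequences are fed into Riemann--Roch; that is exactly the step that must be exhibited, and the natural geometric continuation of your own method is a trap: a section of $F(2)$ would vanish on a del Pezzo fourfold of degree $6$, i.e.\ on $\PP^2\times\PP^2$, and $F(0,1,2)$ genuinely \emph{is} a hyperplane section of $\PP^2\times\PP^2\subset\PP^8$, so no immediate contradiction arises there (one would still have to rule out an isomorphically projected $\PP^2\times\PP^2$ lying on a smooth $Q_6$, which is not obvious). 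The contradiction that actually works is purely cohomological and needs no Riemann--Roch: climbing the restriction sequences from $Q_3$, where the restricted bundle has $h^1(Q_3,E(t))=\delta_{t,0}$, one finds $H^2_*(Q_5,C)=H^3_*(Q_5,C)=0$ and $H^1_*(Q_5,C)\cong k$ concentrated in degree $0$; then for a putative extension $F$ on $Q_6$, the same sequences together with Serre duality ($F^\vee\cong F(1)$, $\omega_{Q_6}=\OO_{Q_6}(-6)$) and $\alpha(C)=2$ force $H^1_*(Q_6,F)=H^2_*(Q_6,F)=0$, whereupon the exact sequence $H^1(Q_6,F)\to H^1(Q_5,C)\to H^2(Q_6,F(-1))$ reads $0\to k\to 0$, which is absurd. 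Until that (or an equivalent) computation is written out, the non-extension claim remains unproven.
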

\begin{proof}
See \cite{Ott}.
\end{proof}

\begin{theorem}\label{Cb3}
Each stable rank 2 vector bundle $F$ on $Q_4$ with Chern classes $c_1=-1$ and $c_2=(1,1)$ extends in a unique way to a Cayley bundle on $Q_5$. Moreover, such a bundle has a global section of $F(1)$ vanishing on two disjoint 2-planes.
\end{theorem}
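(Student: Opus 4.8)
The plan is to start from a stable rank $2$ bundle $F$ on $Q_4$ with $c_1 = -1$, $c_2 = (1,1)$, and to produce a section of $F(1)$ whose zero locus is a disjoint union of two $2$-planes; the extension to $Q_5$ will then follow from Hartshorne–Serre together with Theorem~\ref{Cb2}. First I would normalize cohomology: since $F$ is stable with $\alpha(F) > 0$, one has $H^0(Q_4,F) = H^0(Q_4,F(-1)) = 0$, and by Serre duality (using $K_{Q_4} = \On(-4)$, so $F^\ast \cong F(1)$ up to the normalization $c_1=-1$) one controls $H^4$ and $H^3$ as well. The key numerical input is a Riemann–Roch / Euler-characteristic computation for $F(1)$ on $Q_4$ with the given Chern data: I expect $\chi(Q_4,F(1)) = 2$, which combined with vanishing of $H^i(Q_4,F(1))$ for $i \ge 2$ (obtained from the Castelnuovo–Mumford criterion, Theorem~\ref{CM}, once the relevant twists of the intermediate cohomology are shown to vanish) forces $h^0(Q_4,F(1)) \ge 2$. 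So $F(1)$ has nonzero global sections.

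Next I would analyze the zero locus $Z$ of a general section $s \in H^0(Q_4,F(1))$. Since $F$ is a rank $2$ bundle and $c_1(F(1)) = 1$, $c_2(F(1)) = c_2(F) + c_1(F)\cdot \On(1) + \binom{2}{2}\On(1)^2$ computed in $\CH^2(Q_4) \cong \ZZ$: with $c_2(F) = (1,1)$ interpreted as class $2$ (sum of the two ruling classes) one gets $c_2(F(1))$ equal to the class of a linear $2$-plane counted once, so $Z$ is a surface of degree… — more precisely $Z$ is a locally complete intersection surface in $Q_4$ whose class is that of a plane in each of the two families of planes on $Q_4$. The decomposition $c_2 = (1,1)$ is exactly what says $Z$ meets the two rulings in complementary ways; I would argue that $Z$ is smooth for general $s$ (Bertini-type argument, using global generation of $F(1)$ from the Castelnuovo–Mumford step) and then identify its components. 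Adjunction on $Z$ via $0 \to \On(-1) \to F^\ast \to \shI_{Z/Q_4}(-1) \to 0$, i.e.\ the Koszul/Hartshorne–Serre sequence for $F(1)$, gives $\omega_Z = \On(c_1(F(1)) - 4)\vert_Z = \OO_Z(-3)$; a connected surface with this canonical bundle and the computed degree would be forced to be $\PP^2$, but the numerics $c_2 = (1,1)$ (rather than $(2,0)$ or $(0,2)$) instead force $Z$ to be the disjoint union of two $2$-planes, one from each family.

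Finally, having $Z = \Pi_1 \sqcup \Pi_2$ with $\Pi_i \cong \PP^2$ disjoint, I would run the Hartshorne–Serre correspondence in the other direction to see that $F$ is determined by $Z$ together with the orientation data, hence is rigid enough to extend. Concretely: a general $Q_4 \subset Q_5$ is a hyperplane section, and one checks that the pair $(Z, \text{twist})$ on $Q_4$ lifts to a pair on $Q_5$ (a curve or suitable subscheme whose associated bundle $\widetilde F$ restricts to $F$) — alternatively, one shows $H^2(Q_5, \widetilde F(-1)) = 0$ and $H^1$ of the ideal-sheaf sequence vanishes, so the obstruction to extending the section, and hence $F$ itself, across the hyperplane vanishes; uniqueness of the extension follows from $H^0(Q_5,\N_{Q_4/Q_5} \otimes \text{End} F) $-type vanishing, or more cheaply from Theorem~\ref{Cb2}, which says the stable bundles with $c_1=-1$, $c_2=1$ on $Q_5$ form a single irreducible family of Cayley bundles, so any extension must be Cayley and the restriction map on these moduli is injective. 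The main obstacle I anticipate is the geometric identification step: showing that the general zero locus $Z$ is genuinely \emph{two disjoint planes} rather than a degenerate or reducible-but-connected configuration, and ruling out $c_2 = (2,0)$-type behavior — this requires carefully using the full strength of the $c_2 = (1,1)$ splitting together with smoothness/Bertini, and is where one must be most careful rather than formal.
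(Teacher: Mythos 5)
This statement is not proved in the paper at all: the paper's ``proof'' is the single line ``See \cite{Ott}'', i.e.\ Theorem~\ref{Cb3} is quoted verbatim from Ottaviani's article on Cayley bundles. So there is no internal argument to compare yours with; what matters is whether your sketch would stand on its own, and as written it would not.

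The geometric picture you set up is the right one (the class of the zero scheme of a section of $F(1)$ is indeed $(1,1)$, and adjunction does give $\omega_Z\simeq\OO_Z(-3)$, consistent with a union of planes), but the two claims that carry all the weight are exactly the ones you leave open. First, your cohomological bootstrap is circular: you propose to get the vanishing of $H^i(Q_4,F(1))$ for $i\ge2$ ``from the Castelnuovo--Mumford criterion, once the relevant twists of the intermediate cohomology are shown to vanish'' --- but producing those vanishings for an arbitrary stable $F$ with these Chern classes is precisely the work to be done, and Theorem~\ref{CM} cannot supply its own hypotheses. Likewise the identification of $Z$ as \emph{two disjoint planes} rather than two planes meeting along a line (the other possibility for a class-$(1,1)$ degree-$2$ surface, since planes from the two families of $Q_4$ either are disjoint or meet in a line) is flagged as ``where one must be most careful'' but never actually carried out. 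Second, and more seriously, the extension to $Q_5$ --- the main assertion of the theorem --- is not argued. A Hartshorne--Serre construction on $Q_5$ would require exhibiting a threefold $W\subset Q_5$ with $W\cap Q_4=Z$ and the correct twisted canonical sheaf; you produce no such $W$, and your alternative ``the obstruction to extending the section, and hence $F$ itself, across the hyperplane vanishes'' quantifies cohomology of a bundle $\widetilde F$ on $Q_5$ whose existence is the thing being proved. Finally, uniqueness is deduced from ``the restriction map on these moduli is injective'', which is a restatement of the uniqueness claim, not a proof of it. To make this rigorous you would essentially have to reproduce Ottaviani's argument (which identifies the moduli space on $Q_4$ with that of Cayley bundles on $Q_5$ via an explicit resolution by spinor bundles), and at that point the honest proof of this theorem is the citation the paper already gives.
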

\begin{proof}
See \cite{Ott}.
\end{proof}

\subsection{Rank 2 vector bundles on $Q_3$}

Let $E$ be a rank 2 vector bundle on a smooth quadric threefold $Q_3$. As usual we will use the notation $E(t)$ for the twisted bundle $E\otimes\Otre(t)$, for all $t\in\ZZ$.
\\
We identify the Chern classes $c_1$ and $c_2$ of $E$ with integers, and we recall the well known following formulas:
\begin{align*}
& c_1(E(t)) = c_1 + 2 t \\
& c_2(E(t)) = c_2 + 2 c_1 t + 2 t^2.
\end{align*}
Moreover, the Hilbert polynomial of the vector bundle $E$ is
\begin{equation}\label{chiE}
\chi(E(t)) = \begin{cases}  (2t +3)(2t^{2} + 6t + 4 - 3c_2)/6 & \text{if}\ \ c_1=0 \\[5pt]
(t +1)(2t^{2} + 4t + 3 - 3c_2)/3 & \text{if}\ \ c_1=-1
\end{cases}
\end{equation}
(see e.g.\ \cite{val}, Proposition~9 and Corollary~2).
In particular we have
$$\chi(E(-1)) = {} - \frac{c_2}{2}, \quad \chi(E) = 2 - \frac{3}{2} c_2, \quad \chi(E(1)) = 10 - \frac{5}{2} c_2,$$
when $c_1=0$, while
$$\chi(E(-1)) = 0, \quad \chi(E) = 1 - c_2, \quad \chi(E(1)) = 6 - 2 c_2.$$
when $c_1=-1$.
\\
If $E$ is a rank 2 vector bundle on $Q_3$, then we define
$$\EH := E\otimes\OH$$
the restriction of the bundle $E$ to a general hyperplane section $H\cong Q_2$ (zero locus of a general global section of $\Otre(1)$). Moreover we will use the following convention:
$$\EH(t) := \EH \otimes \Odue(t,t)\quad\forall\,t\in\ZZ.$$
Furthermore, we define the restriction $\ED$ of the bundle $E$, and hence of the bundle $\EH$, to a general conic section $D$ of the threefold $Q_3$ as:
$$\ED := E \otimes \OD = \EH \otimes \OD.$$
So we have, for each integer $t$, the following exact sequences
\begin{equation}\label{rH}
0 \to E(t-1) \to E(t) \to \EH(t) \to 0
\end{equation}
and
\begin{equation}\label{rD}
0 \to \EH(t-1) \to \EH(t) \to \ED(t) \to 0,
\end{equation}
the so called \emph{restriction sequences}.
\\
Recall that, by our convention, sequence (\ref{rH}) is in fact
$$0 \to E(t-1) \to E(t) \to \EH\otimes\Odue(t,t) \to 0,$$
while sequence (\ref{rD}) is in fact
$$0 \to \EH\otimes\Odue(t-1,t-1) \to \EH\otimes\Odue(t,t) \to \ED(t) \to 0.$$

Let $E$ be a normalized rank 2 vector bundle on $Q_3$ which is $k$-Buchsbaum, with $k\ge1$. Let $H$ be a general hyperplane section of $Q_3$ defined by the section $x\in H^0(Q_3,\Otre(1))$. 
We denote by $\phi_t:=\phi_{x,t}$ the multiplication map induced by the restriction sequence (\ref{rH}) on the first cohomology groups, i.e.
$$\phi_t \colon H^1(Q_3,E(t-1)) \to H^1(Q_3,E(t))$$
is the multiplication by $x$. 
By the long cohomology sequence we have 
$$\ker(\phi_t) \cong \coker\big(H^0(Q_3,E(t))\to H^0(Q_2,\EH(t))\big)$$
so we have
\begin{equation}\label{dimker}
\dim\ker(\phi_t) \le h^0(Q_2,\EH(t)) \qquad\forall\,t\in\ZZ.
\end{equation}
Taking the composition of $k$ successive such maps we obtain
$$\ker(\phi_t \circ \cdots \circ \phi_{t-k+1}) \subseteq H^1(Q_3,E(t-k));$$
so, if $E$ is $k$-Buchsbaum, we get for each integer $t$ the following equality
\begin{equation}\label{ker}
H^1(Q_3,E(t-k)) = \ker(\phi_t \circ \cdots \circ \phi_{t-k+1})
\end{equation}
since $\phi_t \circ \cdots \circ \phi_{t-k+1} \equiv 0$.

\bigskip

Now we recall some known results on rank 2 vector bundles on a quadric threefold $Q_3$.

\begin{theorem}[Splitting Criterion]\label{hsc}
Let $E$ be a rank 2 vector bundle on a smooth quadric threefold $Q_3$ with first Chern class $c_1$ and first relevant level $\alpha$. If $E$ is arithmetically Cohen-Macaulay, then $E$ splits, unless $0< c_1+2\alpha < 2$.
\end{theorem}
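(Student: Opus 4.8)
The plan is to reduce everything to the structure theorem for arithmetically Cohen--Macaulay bundles on quadrics, Theorem~\ref{acm}, and then to run a short numerical check on the two resulting model bundles. Since $E$ has rank two and $Q_3$ carries, up to twist, a single spinor bundle $\Ss$, which itself has rank two, Theorem~\ref{acm} leaves only two possibilities: either $E=\Otre(a)\oplus\Otre(b)$ is a direct sum of two line bundles, or $E\simeq\Ss(t)$ is a twist of the spinor bundle (a summand of the form $\Otre(a)\oplus\Ss(t)$ would have rank three, hence is excluded). So it suffices to compute $c_1+2\alpha$ in each of these two cases and take the contrapositive.

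First I would record that $c_1+2\alpha$ is invariant under twisting: replacing $E$ by $E(s)$ sends $c_1$ to $c_1+2s$ and $\alpha$ to $\alpha-s$, so $c_1+2\alpha$ is unchanged; this lets me work with normalized representatives. In the split case, choosing $a\le b$ one has $c_1=a+b$, and $h^0(Q_3,E(t))\ne0$ precisely when $b+t\ge0$, so $\alpha=-b$ and $c_1+2\alpha=a-b\le0$, which lies outside the open interval $(0,2)$; thus split bundles never fall in the exceptional range. In the spinor case I would invoke the standard cohomology of the normalized spinor bundle on $Q_3$ (see \cite{Ot2}, \cite{Sols}): $c_1(\Ss)=-1$, $H^0(Q_3,\Ss)=0$ and $H^0(Q_3,\Ss(1))\ne0$, whence $\alpha(\Ss)=1$ and $c_1(\Ss)+2\alpha(\Ss)=1$; by the twist-invariance above, $c_1(\Ss(t))+2\alpha(\Ss(t))=1$ for every $t$. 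Consequently, an arithmetically Cohen--Macaulay rank-two bundle on $Q_3$ that does not split must be a twist of the spinor bundle, and hence satisfies $0<c_1+2\alpha<2$; equivalently, if $c_1+2\alpha\notin(0,2)$ then $E$ splits, which is the assertion.

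I do not expect a genuine obstacle once Theorem~\ref{acm} is available: the content is entirely the split/spinor dichotomy plus the elementary cohomology of the spinor bundle on $Q_3$. The only points needing a word of care are that the two cases of Theorem~\ref{acm} really do exhaust all rank-two ACM bundles (which is exactly what combining rank two with Theorem~\ref{acm} yields, there being no rank-two indecomposable ACM bundle on $Q_3$ other than the spinor twists) and that $\alpha$ is finite, which holds for any nonzero vector bundle on a projective variety. If one wanted a proof independent of Theorem~\ref{acm}, the alternative would be a direct Horrocks-type argument: restrict to a general hyperplane section $H\cong Q_2\cong\PP^1\times\PP^1$, use the classification of ACM bundles there together with the restriction sequence (\ref{rH}) to lift a splitting of $\EH$ to $E$; that route is considerably more laborious and, given what has already been established, unnecessary here.
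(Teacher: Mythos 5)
Your argument is correct, but it is genuinely different from what the paper does: the paper offers no proof at all for Theorem~\ref{hsc}, simply citing Madonna's splitting criterion for rank~2 ACM bundles on hypersurfaces in $\Pfour$ (\cite{m1}, Theorem~2), whose proof is a direct Horrocks-type argument valid for hypersurfaces of any degree. You instead derive the statement from Theorem~\ref{acm} (the Kn\"orrer--Sols classification of ACM bundles on quadrics as sums of line bundles and twisted spinor bundles), observing that on $Q_3$ the spinor bundle has rank two, so a rank-two ACM bundle is either split or a twist $\Ss(t)$, and then checking that $c_1+2\alpha$ is twist-invariant, is $\le 0$ for split bundles, and equals $1$ for the spinor bundle. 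All of these steps are sound (including the finiteness of $\alpha$ and the exclusion of a summand $\Otre(a)\oplus\Ss(t)$ on rank grounds), and there is no circularity, since Theorem~\ref{acm} is quoted from \cite{Sols} independently of \cite{m1}. What your route buys is economy within this paper: given that Theorem~\ref{acm} is already part of the toolkit, the splitting criterion and Corollary~\ref{ACMnonsplit} both become immediate corollaries, with no need to import Madonna's result. What Madonna's route buys is generality: it applies to hypersurfaces in $\Pfour$ of arbitrary degree, where no structure theorem for ACM bundles analogous to Theorem~\ref{acm} is available, so the criterion cannot be obtained by your reduction there.
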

\begin{proof}
See \cite{m1}, Theorem~2.
\end{proof}

\begin{corollary}\label{ACMnonsplit}
The only indecomposable, arithmetically Cohen-Macaulay, normalized, rank 2 vector bundles on a smooth quadric threefold $Q_3$ are stable with $c_1=-1$, $c_2=1$, and $\alpha=1$, i.e.\ they are the spinor bundles.
\end{corollary}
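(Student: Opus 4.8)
The plan is to derive the statement quickly from the two quoted results, the Splitting Criterion (Theorem~\ref{hsc}) and the structure theorem for arithmetically Cohen--Macaulay bundles (Theorem~\ref{acm}). First I would let $E$ be indecomposable, arithmetically Cohen--Macaulay, normalized, of rank $2$ on $Q_3$, with first relevant level $\alpha$. Since $E$ does not split, Theorem~\ref{hsc} forces $0 < c_1 + 2\alpha < 2$, hence $c_1 + 2\alpha = 1$. As $c_1 \in \{0,-1\}$ and $\alpha \in \ZZ$, the value $c_1 = 0$ would give $2\alpha = 1$, which is impossible; therefore $c_1 = -1$ and $\alpha = 1$. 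In particular $2\alpha + c_1 = 1 > 0$, so $E$ is stable.

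Next I would pin down $c_2$. By Theorem~\ref{acm}, every arithmetically Cohen--Macaulay bundle on $Q_3$ is a direct sum of line bundles and twisted spinor bundles. Since $E$ is indecomposable of rank $2$ and is not a direct sum of line bundles, it must be a single twisted spinor bundle $S(t)$, where $S$ is the spinor bundle on $Q_3$ (which has rank $2$). The normalization $c_1(E) = -1$ together with $c_1(S(t)) = c_1(S) + 2t = -1 + 2t$ forces $t = 0$, so $E \cong S$; recalling that the spinor bundle on $Q_3$ has $c_2 = 1$ (consistently with $h^0(S) = 0$, $h^0(S(1)) \ne 0$, i.e.\ $\alpha = 1$), we conclude that $E$ has $c_1 = -1$, $c_2 = 1$, $\alpha = 1$, i.e.\ $E$ is a spinor bundle.

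There is no real obstacle here: the corollary is essentially a bookkeeping consequence of Theorems~\ref{hsc} and~\ref{acm}. The only points needing care are the elementary arithmetic ruling out $c_1 = 0$ and recalling the explicit Chern classes of the spinor bundle on $Q_3$. If one wished to avoid Theorem~\ref{acm}, an alternative would be to determine $c_2$ directly for a rank $2$, $c_1 = -1$, $\alpha = 1$ arithmetically Cohen--Macaulay bundle via a Hilbert-polynomial and Serre-duality argument, but invoking Theorem~\ref{acm} is shorter and cleaner.
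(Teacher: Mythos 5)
Your argument is correct, and its first half coincides exactly with the paper's: both use the Splitting Criterion (Theorem~\ref{hsc}) to force $c_1+2\alpha=1$, hence $c_1=-1$, $\alpha=1$, and stability. Where you diverge is in pinning down $c_2$ and identifying the bundle. The paper stays with elementary cohomological bookkeeping: since $E$ is arithmetically Cohen--Macaulay and stable with $\alpha=1$, all of $h^0(E)$, $h^1(E)$, $h^2(E)$, $h^3(E)=h^0(E(-2))$ vanish, so $\chi(E)=0$, and the Hilbert polynomial $\chi(E)=1-c_2$ gives $c_2=1$; only then does it cite Ottaviani to recognize a stable rank~$2$ bundle with $c_1=-1$, $c_2=1$ on $Q_3$ as a spinor bundle. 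You instead invoke the structure theorem for arithmetically Cohen--Macaulay bundles on quadrics (Theorem~\ref{acm}) to conclude directly that an indecomposable rank~$2$ such bundle is a twisted spinor bundle $S(t)$, with $t=0$ forced by normalization, and read off $c_2=1$ afterwards. Both routes are valid; yours is arguably the heavier hammer (and note that once you use Theorem~\ref{acm}, the appeal to Theorem~\ref{hsc} becomes redundant, since indecomposability plus rank~$2$ already rules out the line-bundle summands), while the paper's Euler-characteristic computation is self-contained modulo the splitting criterion and the final identification of the spinor bundle. Your closing remark correctly anticipates the paper's actual alternative.
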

\begin{proof}
By the above splitting criterion, if $E$ is an indecomposable, arithmetically Cohen-Macaulay, normalized, rank 2 vector bundle on $Q_3$, then we must have $c_1 + 2\alpha=1$, so the only possibility is $c_1=-1$ and $\alpha=1$, hence $E$ is stable. Moreover, we have $1-c_2=\chi(E)=0$, that is $c_2=1$. Therefore $E$ is a spinor bundle on $Q_3$ (see \cite{Ot2}).
\end{proof}

\begin{theorem}[Restriction Theorem]\label{rectheo}
Let $E$ be a rank 2 vector bundle on a smooth quadric threefold $Q_3$. If $E$ is stable, then $\EH$, the restriction of $E$ to a general hyperplane section $H$ of $Q_3$, is again stable.
\end{theorem}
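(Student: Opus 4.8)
The plan is to reduce the assertion to a vanishing of global sections, dispose of the case $c_1=-1$ by restricting to a general conic section, and isolate the case $c_1=0$ as the genuinely harder one. Since stability is invariant under twist, we may assume $E$ is normalized, $c_1\in\{0,-1\}$. For such a bundle $E$ is stable precisely when $H^0(Q_3,E)=0$: if $H^0(E(t))\ne 0$ for some $t\le 0$, multiplying a section by a linear form $x\in H^0(\Otre(1))$ produces a nonzero section of $E$ in degree $\ge t+1$, so by descending induction $H^0(E)\ne 0$; the same argument on $Q_2$ (using that $\Odue(1,1)$ has sections) shows $\EH$ is stable precisely when $H^0(Q_2,\EH)=0$. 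From the restriction sequence (\ref{rH}) with $t=0$, together with $H^0(E)=0$, one gets $H^0(\EH)\cong\ker\bigl(H^1(E(-1))\xrightarrow{\,\cdot x\,}H^1(E)\bigr)$ where $x$ cuts out $H$. So the theorem is equivalent to: for normalized $E$ with $H^0(E)=0$, multiplication by a general linear form is injective on $H^1(E(-1))$, i.e.\ $H^0(\EH)=0$ for general $H$.

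For $c_1=-1$ I would restrict $E$ to a general conic section $D\cong\PP^1$ of $Q_3$. A stable bundle is slope-semistable, and the restriction of a slope-semistable bundle on $Q_3$ to a general conic is again slope-semistable (by a Grauert--Mülich type argument for the covering family of conic sections, or by the restriction theorem to a general curve section); as $\deg\ED=c_1\cdot\deg D=-2$, this forces $\ED\cong\OPuno(-1)\oplus\OPuno(-1)$, so $H^0(D,\ED)=0$. Now suppose $H^0(\EH)\ne 0$; dividing a nonzero section by the divisorial part $C$ of its zero locus produces an inclusion $\Odue(a,b)\hookrightarrow\EH$ with $a,b\ge 0$ and torsion-free quotient. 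Restricting to a general conic $D\subset H$ yields a nonzero map $\OPuno(a+b)\to\ED=\OPuno(-1)^{\oplus 2}$, impossible since $a+b\ge 0$. Hence $H^0(\EH)=0$ and $\EH$ is stable.

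The case $c_1=0$ is the main obstacle: now $\deg(\EH\vert_C)=0$ for every curve $C$, so the conic restriction only gives $\ED\cong\OPuno^{\oplus 2}$, which has sections, and the previous argument breaks down. The divisorial-part reduction still shows a nonzero section of $\EH$ must vanish in codimension two, so if $\EH$ failed to be stable for general $H$ we would obtain an inclusion $\OO_{Q_2}\hookrightarrow\EH$ with torsion-free quotient for every general $H$. To reach a contradiction I would spread out over the incidence correspondence $\mathcal I=\{(x,H)\mid x\in H\}$, a $\PP^3$-bundle $q\colon\mathcal I\to Q_3$: by boundedness, a suitable component of the relative Quot scheme parametrizing such rank-one subsheaves of $q^\ast E$ is proper and dominant over the space of hyperplanes, producing a rank-one subsheaf $\mathcal L\subseteq q^\ast E$. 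The delicate point — the step I expect to be the real obstacle — is to descend $\mathcal L$ to $Q_3$ when $h^0(\EH)>1$, so that the subsheaf is not unique and that component need not admit a section; one controls this via $\operatorname{Pic}(\mathcal I)$: every line bundle on $\mathcal I$ restricts to a \emph{balanced} line bundle on the fibres $H\cong\PP^1\times\PP^1$, which forces $\mathcal L$ to be the pull-back of a line bundle from $Q_3$, and then a Hartogs-type extension into the locally free sheaf $q^\ast E$ gives an inclusion $\OO_{Q_3}\hookrightarrow E$, i.e.\ $H^0(E)\ne 0$, a contradiction. A cleaner alternative is to quote, for rank-two $\mu$-stable sheaves on a smooth threefold, the restriction theorem in the style of Maruyama and Flenner, which gives $\mu$-stability of $\EH$ with respect to $\Odue(1,1)$ and hence the (weaker) stability used here; for $c_1=0$ the present theorem is essentially that statement.
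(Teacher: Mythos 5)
The paper offers no proof of this statement to compare against: it is quoted from \cite{es}, Theorem~1.6, just as the conic restriction result (Theorem~\ref{rectoconic}) that you invoke is quoted from \cite{es}, Corollary~1.5. Judged on its own terms, your reduction to ``$H^0(Q_2,\EH)=0$ for general $H$'' is correct for the notion of stability the paper actually uses (defined via $\alpha$ and twists by $\Odue(1,1)$), and the $c_1=-1$ case is complete modulo Theorem~\ref{rectoconic}: saturating a section gives $\Odue(a,b)\hookrightarrow\EH$ with $a,b\ge0$, and restriction to a general conic avoiding the finite singular locus of the quotient yields $\OPuno(a+b)\hookrightarrow\OPuno(-1)^{\oplus2}$, which is absurd. (Even more directly, $H^0(\EH(-1))\to H^0(\EH)\to H^0(D,\ED)=0$ shows $H^0(\EH)$ is the image of $H^0(\EH(-m))=0$ for $m\gg0$.)

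The genuine gap is exactly where you feared it, in the descent step for $c_1=0$, and the justification you give does not close it. One has $\operatorname{Pic}(\mathcal{I})=q^*\operatorname{Pic}(Q_3)\oplus p^*\operatorname{Pic}(\check{\PP}^4)$, where $p$ is the projection to the space of hyperplanes; the $p^*$-component restricts \emph{trivially} to every fibre $H$ of $p$, so the observation that $\mathcal{L}\vert_H$ is balanced only pins down the $q^*$-component and leaves $\mathcal{L}^{\vee\vee}\cong p^*\OO(n)$ unconstrained. Nonvanishing of $\operatorname{Hom}(p^*\OO(n),q^*E)$ forces $n\le0$, and $n=0$ gives your contradiction $H^0(Q_3,E)\ne0$, but $n<0$ is not excluded: it corresponds precisely to the destabilizing section $s_H$, evaluated at a fixed point $x$, genuinely varying with $H\ni x$ (a degree-$|n|$ map $q^{-1}(x)=\PP^3\dashrightarrow\PP(E_x)$), and ruling this out is the real content of the theorem. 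The Grauert--M\"ulich mechanism does not do it, since for hyperplane sections of $Q_3$ it only bounds the slope gap of the Harder--Narasimhan filtration of $\EH$ by $\deg Q_3=2$, and the configuration $\OO_{Q_2}\subset\EH$ with quotient $\shI_W$ has gap $0$. The fallback of citing Flenner or Maruyama also fails: Flenner's numerical hypothesis for $k=1$, $r=2$, $\deg Q_3=2$ reads $\binom{4}{1}-1-1>2$, i.e.\ $2>2$, and Mehta--Ramanathan applies only to $|\OO(d)|$ with $d\gg0$; this is exactly why Ein and Sols had to give an ad hoc argument for quadrics. (Two of your side worries, by contrast, are easily dispatched: for general $H$ one has $h^0(\EH)\le1$, since two independent sections each vanishing in codimension two would force $\EH\cong\OO_{Q_2}^{\oplus2}$ and hence $c_2(E)=0$, against Lemma~\ref{c2stable}; and once $\mathcal{L}\cong q^*L$ is known the inclusion descends automatically because $q_*\OO_{\mathcal{I}}=\OO_{Q_3}$.)
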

\begin{proof}
See \cite{es}, Theorem~1.6.
\end{proof}

\begin{theorem}\label{rectoconic}
If $E$ be a semistable rank 2 vector bundle on a smooth quadric threefold $Q_3$ with $c_1=0$ $($respectively $c_1=-1)$, then $\ED$, the restriction of $E$ to a general conic section $D$ of $Q_3$, is isomorphic to $\OPuno\oplus\OPuno$ $($respectively $\OPuno(-1)\oplus\OPuno(-1))$.
\end{theorem}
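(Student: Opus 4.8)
The plan is to reduce the claim to the semistability of $\ED$ as a rank $2$ bundle on the curve $D\cong\PP^1$. By Grothendieck's theorem $\ED\cong\OPuno(d_1)\oplus\OPuno(d_2)$, and computing the degree on $D$ from $c_1(\EH)=(c_1,c_1)$ one gets $d_1+d_2=c_1(\EH)\cdot[D]=(c_1,c_1)\cdot(1,1)=2c_1$. Since a rank $2$ bundle on $\PP^1$ is (semi)stable precisely when it is of the form $\OPuno(a)^{\oplus2}$, proving that $\ED$ is semistable on $D$ is the same as proving $\ED\cong\OPuno(c_1)^{\oplus2}$; for $c_1=0$ and for $c_1=-1$ this gives exactly the two cases of the statement.

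For the main case I assume $E$ is stable. I would regard the general conic $D$ as a general hyperplane section of the quadric surface $H\cong Q_2\subset\PP^3$ (equivalently, a general curve of $|\Odue(1,1)|$), so that $\ED=\EH|_D$. By the Restriction Theorem~\ref{rectheo} the bundle $\EH$ is stable, hence $\mu$-semistable, on $Q_2$. I would then invoke the Grauert--Mülich theorem for the polarized surface $(Q_2,\Odue(1,1))$: the restriction of a $\mu$-semistable sheaf to a general member of a very ample linear system on a surface is $\mu$-semistable on that curve. This makes $\ED=\EH|_D$ semistable on $D$, and the first paragraph concludes. As an alternative avoiding the surface form of Grauert--Mülich, one can degenerate the general conic $D$ to a union $\ell_1\cup\ell_2$ of two general lines of $Q_3$ meeting at a general point: Grauert--Mülich for the family of lines on $Q_3$ makes each $E|_{\ell_i}$ balanced, a short computation gives that the glued sheaf on $\ell_1\cup\ell_2$ has $h^0$ equal to $2$ when $c_1=0$ and to $0$ when $c_1=-1$, and semicontinuity of $h^0$ in the flat family of conics together with $\chi(\ED)=2c_1+2$ then forces $\ED\cong\OPuno(c_1)^{\oplus2}$.

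It remains to handle $E$ semistable but not stable. By the normalization convention this forces $c_1=0$ and $\alpha(E)=0$, so $E$ carries a nonzero section. If the section is nowhere vanishing, then $0\to\OO\to E\to\OO\to0$, which splits since $\Ext^1(\OO,\OO)=H^1(\OO)=0$, so $E\cong\OO^{\oplus2}$ and $\ED\cong\OPuno^{\oplus2}$. Otherwise its zero locus is a curve $Z\subset Q_3$ (it has pure codimension $2$: it cannot contain a divisor, because $\mathrm{Pic}\,Q_3=\ZZ$ and $h^0(E(-1))=0$ rule out a section vanishing on a hyperplane), giving $0\to\OO\to E\to\shI_Z\to0$. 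A general conic is disjoint from $Z$ by a dimension count (conics on $Q_3$ through a fixed point form a $4$-dimensional family, $Z$ is $1$-dimensional, and all conics form a $6$-dimensional family), so restriction to $D$ gives $0\to\OPuno\to\ED\to\OPuno\to0$, which splits because $\Ext^1_{\PP^1}(\OPuno,\OPuno)=0$; hence $\ED\cong\OPuno^{\oplus2}$.

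The genuinely non-formal ingredient is the Grauert--Mülich-type genericity statement (restriction of a semistable bundle to a general conic has balanced splitting type), and I expect that to be the main obstacle; everything else is a degree/dimension count and cohomology on $\PP^1$. A minor point needing care is to check that in the non-stable case $Z$ really is a curve disjoint from the general conic, and that a single general conic can simultaneously serve the Restriction Theorem and Grauert--Mülich.
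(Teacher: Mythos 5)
The paper does not actually prove this statement --- it is quoted from Ein--Sols \cite{es}, Corollary~1.5 --- so the only question is whether your argument stands on its own. Your reduction to semistability of $\ED$ on $D\cong\PP^1$, the degree count $d_1+d_2=2c_1$, and your treatment of the strictly semistable case ($c_1=0$, $\alpha=0$, Serre-correspondence sequence restricted to a conic missing $Z$) are all fine. The problem is the core genericity statement, where both of your proposed routes have a genuine gap.

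First, the version of Grauert--M\"ulich you invoke --- ``the restriction of a $\mu$-semistable sheaf to a general member of a very ample linear system on a surface is $\mu$-semistable'' --- is false: $T_{\PP^2}$ is stable, yet its restriction to every line is $\OO(2)\oplus\OO(1)$. The correct statement (e.g.\ \cite{HL}, Theorem~3.1.2) only bounds the gaps in the Harder--Narasimhan filtration of the restriction by $a_1\cdots a_{n-1}\deg X$. For $D\in\lvert\Odue(1,1)\rvert$ on $(Q_2,\Odue(1,1))$, or for the complete-intersection conic $D=Q_3\cap K\cap K'$ on $Q_3$, this bound is $2$; since $d_1+d_2=2c_1$ is even, the gap $d_1-d_2$ is automatically even, so Grauert--M\"ulich leaves the unbalanced type $(c_1+1,c_1-1)$ entirely open. (Note also that your first route only ever uses semistability of $\EH$ on $Q_2$, whereas the theorem genuinely uses semistability on $Q_3$; an arbitrary stable bundle on $Q_2$ is not of the form $\EH$, which is a warning sign that no argument purely on the surface can suffice.) Second, the fallback degeneration to $\ell_1\cup\ell_2$ fails numerically when $c_1=0$: semicontinuity gives $h^0(D,\ED)\le 2$, but $h^0(\PP^1,\OPuno(1)\oplus\OPuno(-1))=2$ as well, so together with $\chi(\ED)=2$ this does not force $\ED\cong\OPuno^{\oplus 2}$. (For $c_1=-1$ the numerics do work, $h^0=0$ forcing balancedness, but there you still owe the genericity claim that the trivial sub-line-bundles of $E\vert_{\ell_1}$ and $E\vert_{\ell_2}$ have distinct fibres at the node, as well as the Grauert--M\"ulich statement for the family of lines on $Q_3$, which is itself one of the nontrivial inputs of \cite{es}.) To repair the $c_1=0$ case along these lines one would have to run semicontinuity for $\ED\otimes\OPuno(-1)$, i.e.\ work with a family of odd-degree line bundles on the conics that do not come from $Q_3$; this is essentially the kind of careful incidence-variety argument that Ein--Sols carry out, and it is missing from your sketch.
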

\begin{proof}
See \cite{es}, Corollary~1.5.
\end{proof}

\section{Preliminary results}

In the sequel, when we say that a vector bundle is $k$-Buchsbaum we always assume that $k\ge1$, hence excluding arithmetically Cohen-Macaulay bundles.

\subsection{Stable bundles}

\begin{lemma}\label{cohD}
Let $E$ be a semistable, normalized, rank 2 vector bundle on a smooth quadric threefold $Q_3$ and let $D$ be a general conic section of $Q_3$.
Then for every integer $t\ge0$:
$$h^0(D,\ED(t)) = 4t+2c_1+2 \quad\text{and}\quad h^1(D,\ED(t)) = 0.$$
\end{lemma}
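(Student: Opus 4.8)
The plan is to reduce everything to the cohomology of line bundles on $\PP^1$, using the known splitting type of the restriction $\ED$.

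First I would recall, as noted in the notation subsection, that a general conic section $D$ of $Q_3$ is the isomorphic image of $\PP^1$ under the $2$-uple Veronese embedding, so that $\OD(1)\simeq\OPuno(2)$, and more generally $\OD(t)\simeq\OPuno(2t)$ for every $t\in\ZZ$. Hence $\ED(t)=\ED\otimes\OD(t)$ is, once $\ED$ is written as a sum of line bundles on $\PP^1$, a sum of line bundles on $\PP^1$ whose degrees are shifted by $2t$.

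Next, since $E$ is semistable and normalized, Theorem~\ref{rectoconic} applies to the general conic $D$ and gives $\ED\simeq\OPuno\oplus\OPuno$ when $c_1=0$ and $\ED\simeq\OPuno(-1)\oplus\OPuno(-1)$ when $c_1=-1$. In both cases this can be written uniformly as $\ED\simeq\OPuno(c_1)^{\oplus 2}$, and therefore $\ED(t)\simeq\OPuno(2t+c_1)^{\oplus 2}$. Finally, for $t\ge0$ we have $2t+c_1\ge-1$ (using $c_1\in\{0,-1\}$), so $h^1(\PP^1,\OPuno(2t+c_1))=0$ and $h^0(\PP^1,\OPuno(2t+c_1))=2t+c_1+1$, the latter formula remaining valid in the boundary case $2t+c_1=-1$, where it correctly returns $0$. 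Summing the two copies yields $h^0(D,\ED(t))=4t+2c_1+2$ and $h^1(D,\ED(t))=0$, as claimed.

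There is essentially no real obstacle here: the substance of the statement is already contained in Theorem~\ref{rectoconic}, and the rest is the elementary cohomology of line bundles on $\PP^1$. The only point deserving a moment's care is the edge case $t=0$, $c_1=-1$, where both $h^0$ and $h^1$ of $\ED$ vanish and one must check that the displayed formula $4t+2c_1+2$ still produces the right value, namely $0$.
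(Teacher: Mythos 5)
Your proof is correct and follows essentially the same route as the paper: both apply Theorem~\ref{rectoconic} to write $\ED\simeq\OPuno(c_1)^{\oplus 2}$, use $\OD(t)\simeq\OPuno(2t)$ to get $\ED(t)\simeq\OPuno(2t+c_1)^{\oplus 2}$, and then read off the cohomology of line bundles on $\PP^1$ (the paper invokes Serre duality for the $h^1$ vanishing, you note directly that $2t+c_1\ge -1$, which is the same elementary fact). Your explicit check of the boundary case $t=0$, $c_1=-1$ is a nice touch but does not change the argument.
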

\begin{proof}
By Theorem~\ref{rectoconic} $\ED \simeq \OPuno(c_1)\oplus\OPuno(c_1)$, so taking into account that $\OD(t)\simeq\OPuno(2t)$ for all $t\in\ZZ$, we have
$$\ED(t) \simeq \ED\otimes\OD(t) \simeq \OPuno(2t+c_1)\oplus\OPuno(2t+c_1)$$
for all $t\in\ZZ$, then we get
$$h^0(D,\ED(t)) = 2\, h^0(\PP^1,\OPuno(2t+c_1)) = 4t + 2c_1 + 2 \quad\forall\,t\ge 0,$$
and also $h^0(D,\ED(t)) = 0$ for each $t<0$.
The vanishing of the first cohomology follows by Serre duality. 
\end{proof}

\begin{lemma}\label{c2stable}
Let $E$ be a stable, normalized, rank 2 vector bundle on a smooth quadric threefold $Q_3$. Then $c_2>0$.
\end{lemma}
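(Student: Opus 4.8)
The plan is to reduce the statement to a Riemann--Roch computation on the surface $Q_2$. First I would restrict $E$ to a general hyperplane section $H\cong Q_2\cong\PP^1\times\PP^1$: by the Restriction Theorem~\ref{rectheo} the bundle $F:=\EH$ is again stable, now with respect to the polarization $\Odue(1,1)$. Since $i^*\Otre(1)=\Odue(1,1)$ one has $c_1(F)=(c_1,c_1)$, and since restriction sends the class of a line on $Q_3$ to the class of a point on $Q_2$ it induces an isomorphism $\CH^2(Q_3)\cong\ZZ\to\CH^2(Q_2)\cong\ZZ$, so $c_2(F)=c_2$. Hence it is enough to show $c_2(F)>0$.

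Next I would use stability to force $H^0(Q_2,F)=H^2(Q_2,F)=0$. As $E$ is normalized, the $\Odue(1,1)$-slope of $F$ equals $c_1\le 0$; a nonzero global section of $F$ would produce a sub-line-bundle $\Odue(a,b)$ with $a,b\ge0$, hence of non-negative degree, contradicting stability, so $H^0(Q_2,F)=0$. By Serre duality $H^2(Q_2,F)$ is dual to $H^0(Q_2,F^\ast\otimes\omega_{Q_2})$, and $F^\ast\otimes\omega_{Q_2}\cong F\otimes\Odue(-c_1-2,-c_1-2)$ is again stable, of slope $-c_1-4<0$, hence has no global sections; thus $H^2(Q_2,F)=0$ and $\chi(F)=-h^1(Q_2,F)\le 0$.

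Finally, Riemann--Roch on the surface $Q_2$ (using $\chi(\Odue)=1$, $K_{Q_2}=\Odue(-2,-2)$, $c_1(F)^2=2c_1^2$ and $c_1(F)\cdot K_{Q_2}=-4c_1$) yields
$$\chi(F)=2+2c_1+c_1^2-c_2,$$
so that $c_2=2+2c_1+c_1^2-\chi(F)\ge (c_1+1)^2+1\ge1$; concretely $c_2\ge2$ when $c_1=0$ and $c_2\ge1$ when $c_1=-1$, and in either case $c_2>0$.

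The point where care is needed -- the main obstacle -- is precisely the passage to the surface. Working directly on $Q_3$, Serre duality relates $H^2(Q_3,E(t))$ to $H^1$ of a twist of $E$, which stability gives no handle on; only after restricting to $Q_2$ does Serre duality trade $H^2$ for an $H^0$ that stability can kill. Apart from this, one has only to check that the Restriction Theorem applies for a general $H$ and that the Chern classes behave as stated under restriction.
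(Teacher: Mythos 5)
Your proof is correct, but it takes a genuinely different route from the paper's. You push the whole computation down to the surface: after invoking the Restriction Theorem you kill $H^0(Q_2,\EH)$ and $H^2(Q_2,\EH)$ by stability and Serre duality on $Q_2$, and then read off $c_2$ from Riemann--Roch on $\PP^1\times\PP^1$; your formula $\chi(\EH)=2+2c_1+c_1^2-c_2$ and the identification $c_2(\EH)=c_2(E)$ under the restriction $\CH^2(Q_3)\to\CH^2(Q_2)$ are both correct, and they give exactly the bounds $c_2\ge2$ for $c_1=0$ and $c_2\ge1$ for $c_1=-1$. The paper instead stays on the threefold: from $0<a\le\alpha$ it gets $h^0(Q_3,E)=h^3(Q_3,E)=0$, writes $h^2(Q_3,E)=h^1(Q_3,E(-3-c_1))$ by Serre duality, and then bounds this by $h^1(Q_3,E)$ using the injectivity of the multiplication maps $\phi_t$ for $t\le a-1$ (which holds because $h^0(Q_2,\EH(t))=0$ there), so that $\chi(E)\le0$ and the Hilbert polynomial (\ref{chiE}) gives the same inequalities. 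So your closing remark that, working on $Q_3$, ``stability gives no handle'' on the $H^2$ term is too pessimistic: the chain of injections supplied by $a>0$ is precisely that handle, and it is the same input (stability of $\EH$, hence the Restriction Theorem) that you use on the surface. In short, both arguments rest on Theorem~\ref{rectheo}; yours trades the chain of injective multiplication maps for a surface Riemann--Roch computation, which is arguably more self-contained but requires the extra (correct) bookkeeping of Chern classes under restriction. One small simplification: since in this paper stability is defined cohomologically ($\alpha(\EH)>0$), the vanishings $h^0(Q_2,\EH)=0$ and $h^0(Q_2,\EH(-c_1-2))=0$ are immediate from $-c_1-2\le -1<a$, and your sub-line-bundle argument can be omitted.
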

\begin{proof}
By the stability hypothesis it holds $0<a\le\alpha$, where $\alpha=\alpha(E)$ and $a=\alpha(\EH)$, with $H$ a general hyperplane section of $Q_3$. Therefore $h^0(Q_3,E)=0$ and $h^3(Q_3,E)=h^0(Q_3,E(-3-c_1))=0$, hence
$$\chi(E) = h^2(Q_3,E) - h^1(Q_3,E) = h^1(Q_3,E(-3-c_1)) - h^1(Q_3,E) \le 0,$$
since the multiplication map $\phi_t$ is injective for each $t\le a-1$, where $a-1\ge0$. 
We have $\chi(E) = 2-\frac{3}{2}c_2$ if $c_1=0$ and $\chi(E) = 1-c_2$ if $c_1=-1$, so we get $c_2\ge2$ when $c_1=0$, and $c_2\ge1$ when $c_1=-1$.
\end{proof}

\begin{remark}
Observe that the above Lemma is a consequence of Bogomolov's inequality 
(see e.g.\ \cite{HL}, Theorem~7.3.1), but here we have given a proof based on elementary techniques.
\end{remark}

\begin{lemma}\label{h1stable}
Let $E$ be a stable, $k$-Buchsbaum, normalized, rank 2 vector bundle on $Q_3$. Then $h^1(Q_3,E(-1))\ne 0$ if $c_1=0$, and $h^1(Q_3,E) \ne 0$ if $c_1=-1$.
\end{lemma}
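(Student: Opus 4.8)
The plan is to argue by contradiction, assuming the relevant intermediate cohomology vanishes, and derive a contradiction with stability (via Corollary~\ref{ACMnonsplit}) by showing that the hypotheses force $E$ to have no intermediate cohomology at all. First I would set up the Euler characteristic bookkeeping. Since $E$ is stable and normalized, we have $0<a\le\alpha$, so $h^0(Q_3,E(t))=0$ for $t\le 0$, and by Serre duality $h^3(Q_3,E(t))=h^0(Q_3,E(-3-c_1-t))=0$ for $t\ge -2-c_1$. In particular $h^3(Q_3,E)=h^3(Q_3,E(-1))=0$ when $c_1\in\{0,-1\}$. From the formulas recalled in the excerpt, $\chi(E(-1))=-c_2/2$ (case $c_1=0$) and $\chi(E(-1))=0$, $\chi(E)=1-c_2$ (case $c_1=-1$).

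Next, treat the two cases. Suppose $c_1=0$ and $h^1(Q_3,E(-1))=0$. Then $\chi(E(-1))=h^2(Q_3,E(-1))\ge0$, so $-c_2/2\ge 0$, i.e.\ $c_2\le 0$; this contradicts Lemma~\ref{c2stable}, which gives $c_2>0$. Suppose instead $c_1=-1$ and $h^1(Q_3,E)=0$. Here I would combine $\chi(E)=1-c_2$ with $h^0(Q_3,E)=h^3(Q_3,E)=0$ to get $h^2(Q_3,E)=1-c_2$, forcing $c_2\le1$; together with Lemma~\ref{c2stable} this pins down $c_2=1$ and $h^2(Q_3,E)=0$. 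So in this case the vanishing of $h^1(Q_3,E)$ propagates to the vanishing of $H^i(Q_3,E)$ for all $i$, i.e.\ $\chi(E)=0$ and all cohomology of $E$ (untwisted) vanishes. To conclude I would use the $k$-Buchsbaum hypothesis together with the multiplication maps $\phi_t$: for $t\le a-1$ (with $a-1\ge 0$) the maps $\phi_t$ are injective, and running the restriction sequence (\ref{rH}) downward from $E$ shows $h^1(Q_3,E(t))=0$ for all $t\le 0$; running it upward, and using the $k$-Buchsbaum relation (\ref{ker}) which says $H^1(Q_3,E(t-k))$ is killed by the composite of $k$ maps $\phi$, one shows the vanishing propagates upward as well. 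The net effect is that $E$ has no intermediate cohomology, so by Corollary~\ref{ACMnonsplit} $E$ is a spinor bundle, which has $h^1(Q_3,E)\ne 0$ (it is indecomposable non-split with $H^1_*\ne 0$) — the desired contradiction. The analogous propagation argument handles $c_1=0$ as a cross-check, though there the $c_2$ inequality already suffices.

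The main obstacle I expect is making the propagation step fully rigorous: showing that vanishing of $h^1$ in the single degree where it is assumed, combined with stability-forced vanishing of $h^0$ and $h^3$ in a range of degrees and the $k$-Buchsbaum annihilation (\ref{ker}), really forces $H^1_*(Q_3,E)=0$ entirely rather than just in a finite window. One has to be careful that $H^1_*(Q_3,E)$ is a finite-length module, that the maps $\phi_t$ are injective for $t$ small (below $a$) and surjective for $t$ large (above $\alpha+$ something, by Castelnuovo--Mumford regularity, Theorem~\ref{CM}), so the module is "supported" in a bounded range of degrees; then the single assumed vanishing together with $\mm^k\cdot H^1_*=0$ can be leveraged. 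Alternatively — and this may be cleaner — one avoids the full propagation by noting that $\chi(E)=0$ with $h^0=h^3=0$ already gives $h^1(Q_3,E)=h^2(Q_3,E)$, so assuming $h^1(Q_3,E)=0$ directly yields $h^2(Q_3,E)=0$; one then only needs the (easier) fact that $h^1$ vanishing in degrees $\le 0$ follows from injectivity of the $\phi_t$ there, and that $h^1(Q_3,E(t))$ for $t\ge 1$ is controlled by the restriction to $H$ via (\ref{dimker}) and the corresponding statement on $Q_2$, reducing to Lemma~\ref{cohD} and a spinor-bundle identification on $Q_2$. Either way, the crux is converting a one-degree vanishing hypothesis into global acyclicity using the Buchsbaum structure, after which Corollary~\ref{ACMnonsplit} closes the argument.
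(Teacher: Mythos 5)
Your overall strategy (Riemann--Roch together with the stability vanishings $h^0=h^3=0$) is genuinely different from the paper's proof, which simply invokes the non-vanishing theorem of \cite{BVV}, Theorem~5.4, after checking a numerical inequality. Your $c_1=0$ case is complete and correct as written: $\chi(E(-1))=-c_2/2<0$ with $h^0(Q_3,E(-1))=h^3(Q_3,E(-1))=0$ forces $h^1(Q_3,E(-1))>h^2(Q_3,E(-1))\ge 0$, with no need for contradiction or for the Buchsbaum hypothesis. Your reduction of the $c_1=-1$ case to $c_2=1$ is also correct, and the propagation step you worry about can be closed without (\ref{ker}): injectivity of $\phi_t$ for $t\le a-1$ gives $h^1(Q_3,E(t))=0$ for all $t\le 0$, Serre duality then gives $h^2(Q_3,E(t))=0$ for $t\ge-2$, and the Castelnuovo--Mumford criterion (Theorem~\ref{CM}) applied to $E(1)$, using $h^1(Q_3,E)=h^2(Q_3,E(-1))=h^3(Q_3,E(-2))=0$, kills $h^1$ in all positive twists; hence $E$ is arithmetically Cohen--Macaulay and, by Corollary~\ref{ACMnonsplit}, a spinor bundle.

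The genuine gap is in your final sentence. You assert that the spinor bundle has $h^1(Q_3,E)\ne 0$ because ``it is indecomposable non-split with $H^1_*\ne 0$''. This is false: the spinor bundle on $Q_3$ is arithmetically Cohen--Macaulay, i.e.\ $H^1_*=H^2_*=0$ --- that is exactly the content of the Corollary~\ref{ACMnonsplit} you cite. You are implicitly importing Horrocks' criterion from $\PP^n$ (indecomposable non-split $\Rightarrow$ nontrivial intermediate cohomology), which fails on quadrics. Consequently your chain of deductions in the $c_2=1$ subcase terminates in a true, non-contradictory statement, and indeed the lemma as literally stated \emph{is} satisfied... by every bundle except the spinor bundle, which violates it. The reason the spinor bundle does not break the lemma is the standing convention announced at the top of Section~3: in this paper ``$k$-Buchsbaum'' always presupposes $k\ge1$ and explicitly excludes arithmetically Cohen--Macaulay bundles. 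So the correct way to close your argument is: having shown that $h^1(Q_3,E)=0$ forces $E$ to be arithmetically Cohen--Macaulay, you contradict that convention directly --- no appeal to any cohomological property of spinor bundles is needed, and the one you appeal to is the opposite of the truth.
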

\begin{proof}
By Lemma~\ref{c2stable} it holds $c_2>0$, and this implies $\vartheta>0$, where $\vartheta=\frac{3}{2}c_2+\frac{1}{4}$ if $c_1=0$ and $\vartheta=\frac{3}{2}c_2-\frac{1}{2}$ if $c_1=-1$. Moreover $-3/2+\sqrt{\vartheta}>-1$ for each $c_2\ge2$, $c_2$ even, when $c_1=0$, and also $-1+\sqrt{\vartheta}>0$ for each $c_2\ge2$ when $c_1=-1$. So by \cite{BVV}, Theorem~5.4, we get the claim.
\end{proof}

\begin{lemma}\label{van}
Let $E$ be a stable, $k$-Buchsbaum, normalized, rank 2 vector bundle on $Q_3$. Let $H$ be a general hyperplane section of $Q_3$, and let $a$ be the first relevant level of $\EH$. Then $h^1(Q_3,E(t)) = 0$ for each $t \le a-k-1$.
\end{lemma}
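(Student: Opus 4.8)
The plan is to run the whole argument through the multiplication maps $\phi_t\colon H^1(Q_3,E(t-1))\to H^1(Q_3,E(t))$ introduced above, using only two ingredients already at hand: the bound $\dim\ker(\phi_t)\le h^0(Q_2,\EH(t))$ of (\ref{dimker}), and the identity $H^1(Q_3,E(t-k))=\ker(\phi_t\circ\cdots\circ\phi_{t-k+1})$ of (\ref{ker}), which encodes the $k$-Buchsbaum hypothesis. The first step is to record that, by the very definition of the first relevant level $a=\alpha(\EH)$, one has $h^0(Q_2,\EH(t))=0$ for every $t\le a-1$ (with the convention that, since $n=3$, this means $h^0(Q_2,\EH\otimes\Odue(t,t))$). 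Plugging this into (\ref{dimker}) immediately gives that $\phi_t$ is injective for all $t\le a-1$.

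The main step is then to combine this injectivity range with (\ref{ker}). Setting $s=t-k$, identity (\ref{ker}) reads
$$H^1(Q_3,E(s)) = \ker\bigl(\phi_{s+k}\circ\cdots\circ\phi_{s+1}\bigr),$$
the composition being the zero map because multiplication by $x^k\in\mm^k$ annihilates $H^1_*(Q_3,E)$. Now assume $s\le a-k-1$. Then the indices $s+1,s+2,\dots,s+k$ of the factors occurring in this composition are all at most $a-1$, so each $\phi_{s+j}$ ($1\le j\le k$) is injective by the first step; hence their composition is injective, its kernel is zero, and therefore $H^1(Q_3,E(s))=0$. Renaming $s$ back to $t$ yields the assertion.

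I do not foresee a genuine obstacle here: the whole thing is bookkeeping with the maps $\phi_t$, and the only point that needs a little care is that the largest twist $s+k$ appearing in the $k$-fold composition must stay inside the injectivity range $\{\,t\le a-1\,\}$, which happens precisely when $s\le a-k-1$, together with a consistent use of the $n=3$ normalization of $a$ relative to $\Odue(1,1)$. It is worth noting that stability is not actually used by this argument; it is merely part of the standing hypotheses of this subsection, since the maps $\phi_t$ and the relations (\ref{dimker}) and (\ref{ker}) only require $E$ to be normalized and $k$-Buchsbaum.
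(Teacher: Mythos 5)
Your proof is correct and is essentially identical to the paper's: both deduce injectivity of $\phi_t$ for $t\le a-1$ from $h^0(Q_2,\EH(t))=0$ via (\ref{dimker}), and then observe that for $t\le a-k-1$ the $k$-fold composition in (\ref{ker}) is simultaneously injective and zero. Your side remark is also accurate — the paper invokes stability only to record $0<a\le\alpha$, which plays no role in the vanishing argument itself.
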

\begin{proof}
By the stability hypothesis we have $0<a\le\alpha$, where $\alpha$ is the first relevant level of $E$. Then we have $h^0(Q_2,\EH(t))=0$ for all $t\le a-1$, hence the multiplication map $\phi_t$ is injective for each $t\le a-1$. Therefore, the composition of $k$ successive multiplication maps $\phi_t \circ \cdots \circ \phi_{t-k+1}$ is, at the same time, injective and the zero map for each $t\le a-1$, so we obtain (see (\ref{ker})) $h^1(Q_3,E(t)) = 0$ for all $ t \le a-k-1$.
\end{proof}

\begin{lemma}\label{ak}
Let $E$ be a stable, $k$-Buchsbaum, normalized, rank 2 vector bundle on $Q_3$. Let $H$ be a general hyperplane section of $Q_3$, and let $a$ be the first relevant level of $\EH$. Then
$$a \le \begin{cases} k-1 \quad & \text{if\ \ } c_1=0 \\[1pt] k & \text{if\ \ } c_1=-1 \end{cases}$$
\end{lemma}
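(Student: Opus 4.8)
The statement to prove is a bound on $a=\alpha(\EH)$ in terms of the Buchsbaum index $k$, for a stable normalized $k$-Buchsbaum rank two bundle $E$ on $Q_3$. The natural approach is to combine the vanishing from Lemma~\ref{van} with a count of $h^1$ along the multiplication maps $\phi_t$, using the Euler characteristic formulas. Concretely, I would argue by contradiction, assuming $a \ge k$ (if $c_1=0$) or $a \ge k+1$ (if $c_1=-1$), and derive a contradiction with stability, specifically with the inequality $a\le\alpha$ together with $h^0(Q_3,E)=0$ and the cohomological structure forced by $k$-Buchsbaumness.

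\textbf{Key steps.}
First I would record what Lemma~\ref{van} gives: $h^1(Q_3,E(t))=0$ for $t\le a-k-1$. Next, since $E$ is stable we have $h^0(Q_3,E(t))=0$ for $t\le 0$ and $h^0(Q_2,\EH(t))=0$ for $t\le a-1$, so the maps $\phi_t$ are injective for $t\le a-1$; combined with $\phi_t\circ\cdots\circ\phi_{t-k+1}\equiv 0$ this forces, via (\ref{ker}), that $h^1(Q_3,E(t))=0$ also fails to grow too fast. Then I would use Serre duality $h^2(Q_3,E(t))=h^1(Q_3,E(-3-c_1-t))$ and $h^3(Q_3,E(t))=h^0(Q_3,E(-3-c_1-t))$ to pin down $h^2$ and $h^3$ in the relevant range, in particular to see that $h^3(Q_3,E(t))=0$ once $-3-c_1-t<\alpha$, i.e. for $t$ not too negative. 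With these vanishings in place I would evaluate $\chi(E(t))$ at a carefully chosen integer $t$ — presumably $t$ near $a-k$ or the first place where $h^0$ becomes nonzero — so that $\chi(E(t))$ reduces to $-h^1(Q_3,E(t))$ or to $h^0(Q_3,E(t))-h^1(Q_3,E(t))$. Using the explicit polynomial in (\ref{chiE}) and the fact that $h^1$ at that spot is controlled by $h^0(Q_2,\EH(a))$ (which is $0$ for the twist just below $a$) via (\ref{dimker}), I expect to get a numerical inequality on $c_2$ and $a$ that contradicts $c_2>0$ (Lemma~\ref{c2stable}) or contradicts $a\le\alpha$ when $a$ is too large.

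\textbf{Alternative/cleaner route.}
A likely cleaner organization: because $E$ is $k$-Buchsbaum, applying (\ref{ker}) at $t=a-1$ gives $H^1(Q_3,E(a-1-k))=\ker(\phi_{a-1}\circ\cdots\circ\phi_{a-k})$, and all of $\phi_{a-1},\dots$ in positive-enough range are injective on the part of $H^1$ coming from below, so the only contribution to $H^1$ at low twists must die. Meanwhile $h^1(Q_3,E(-1))\ne0$ (if $c_1=0$) or $h^1(Q_3,E)\ne0$ (if $c_1=-1$) by Lemma~\ref{h1stable}. Comparing the twist where $h^1$ is forced nonzero against the range $t\le a-k-1$ where $h^1$ must vanish yields $-1 > a-k-1$ (case $c_1=0$), i.e. $a<k$, hence $a\le k-1$; and $0 > a-k-1$ (case $c_1=-1$), i.e. $a\le k$. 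This is essentially immediate once both lemmas are invoked, so I suspect this is the intended short proof.

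\textbf{Main obstacle.}
The delicate point is making sure the nonvanishing twist from Lemma~\ref{h1stable} genuinely lies outside the vanishing range of Lemma~\ref{van}; this is where the hypothesis $c_2>0$ (and hence the strict inequalities on $\vartheta$ in Lemma~\ref{h1stable}) does real work, and one must check the two parity cases ($c_1=0$ forces $c_2\ge2$ even, $c_1=-1$ forces $c_2\ge1$) separately. If instead one goes the $\chi$-computation route, the obstacle is choosing the right twist $t$ so that enough cohomology vanishes simultaneously — too negative and $h^3$ survives, too positive and $h^0$ survives — and then extracting a clean contradiction rather than a vacuous inequality.
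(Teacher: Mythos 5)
Your ``alternative/cleaner route'' is exactly the paper's proof: the paper simply invokes Lemma~\ref{h1stable} and Lemma~\ref{van}, and the comparison of the nonvanishing twist ($t=-1$ for $c_1=0$, $t=0$ for $c_1=-1$) with the vanishing range $t\le a-k-1$ gives $a\le k-1$ and $a\le k$ respectively, just as you computed. The longer $\chi$-based argument you sketch first is unnecessary, and your instinct that the two-lemma comparison is the intended short proof is right.
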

\begin{proof}
It is enough to use Lemma~\ref{h1stable} and Lemma~\ref{van}.
\end{proof}

\subsection{Non-stable bundles}

\begin{lemma}\label{non-stable}
Let $E$ be a normalized rank 2 vector bundle on $Q_3$ and $\EH$ its restriction to a general hyperplane section $H$ of $Q_3$. If $E$ is non-split and non-stable, then
\begin{enumerate}
\item $\alpha(\EH)=\alpha(E)=\alpha$,
\item $h^0(Q_3,E(\alpha))=h^0(Q_2,\EH(\alpha))=1$,
\item $h^0(Q_3,E(t))=h^0(Q_3,\Otre(t-\alpha))$ for all $t\le -\alpha-c_1$,
\item $h^0(Q_2,\EH(t))=h^0(Q_2,\Odue(t-\alpha))$ for all $t\le -\alpha-c_1$,
\item for each $t\le -\alpha-c_1$ the following sequence of cohomology groups
$$0 \to H^0(Q_3,E(t-1)) \to H^0(Q_3,E(t)) \to H^0(Q_2,\EH(t)) \to 0$$
is exact,
\item the multiplication map $\phi_t \colon H^1(Q_3,E(t-1)) \to H^1(Q_3,E(t))$ by a general linear form $x\in H^0(Q_3,\Otre(1))$ is injective for each $t\le -\alpha-c_1$.
\end{enumerate}
\end{lemma}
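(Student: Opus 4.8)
The plan is to exploit the fact that a non-split, non-stable normalized bundle $E$ behaves in low twists like a twist of a line bundle, because the section vanishing locus is nice. Let me think through the structure.

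The key observation is that since $E$ is non-stable and normalized, $\alpha(E) \le 0$, so $E(\alpha)$ has a global section. Its zero locus is a codimension-2 subscheme (or empty), and the section $s \in H^0(E(\alpha))$ gives an exact sequence $0 \to \OO_{Q_3} \to E(\alpha) \to \shI_Y(2\alpha + c_1) \to 0$ where $Y$ is the zero locus. Since $2\alpha + c_1 \le 0$ (non-stability, and actually for $c_1 = 0$ non-stable means $\alpha \le 0$ so $2\alpha + c_1 \le 0$; for $c_1 = -1$, $\alpha \le 0$ gives $2\alpha + c_1 \le -1 < 0$), the twist $\shI_Y(2\alpha+c_1)$ has $h^0 = 0$ for the structure sheaf part... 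Actually I want to first show $Y = \emptyset$, i.e., the section is nowhere vanishing, which would force $E$ to split — contradiction unless... hmm, but $E$ is assumed non-split. So $Y \ne \emptyset$. Let me reconsider: the point is that $c_1(\shI_Y) = 2\alpha + c_1 \le 0$ and $c_2 = c_2(E(\alpha)) \ge 0$; if this $c_2 = 0$ then $Y$ has the right... Actually I think the cleanest approach avoids classifying $Y$ and instead argues cohomologically.

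So here is my revised plan. **Step 1**: Establish that $a := \alpha(\EH)$ equals $\alpha := \alpha(E)$. We always have $a \le \alpha$. For the reverse, restrict a section: if $s \in H^0(E(\alpha))$ then $s|_H \in H^0(\EH(\alpha))$; one must check $s|_H \ne 0$, which holds because the zero locus of $s$ has codimension $2$ in $Q_3$ hence doesn't contain the general hyperplane $H$ — so $s|_H$ is a nonzero section, giving $a \le \alpha$, hence $a = \alpha$. Also this shows $\alpha(\EH) \le \alpha$ cannot be strict. **Step 2 (items 2–5)**: From the section $s$, get $0 \to \OO_{Q_3}(t-\alpha) \to E(t) \to \shI_Y(t + \alpha + c_1) \to 0$. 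For $t \le -\alpha - c_1$ we have $t + \alpha + c_1 \le 0$, so $H^0(\shI_Y(t+\alpha+c_1)) \subseteq H^0(\OO_{Q_3}(t+\alpha+c_1)) = 0$ unless $t = -\alpha-c_1$, and even then $H^0(\shI_Y) = 0$ since $Y$ is nonempty (as $E$ is non-split, the section $s$ genuinely vanishes somewhere) — actually $H^0(\OO_{Q_3}) = k$ and a nonzero constant doesn't vanish on $Y \ne \emptyset$, so $H^0(\shI_Y) = 0$. Thus $H^0(E(t)) \cong H^0(\OO_{Q_3}(t-\alpha))$, giving (3); restricting the whole sequence to $H$ and running the same argument gives (4); the surjectivity in (5) then follows by comparing dimensions in the long exact sequence of (\ref{rH}) — both outer maps on $H^0$ of line bundles are the honest restriction maps of hyperplane sections on a quadric, which are surjective. **Step 3 (item 6)**: Injectivity of $\phi_t$ for $t \le -\alpha - c_1$ is exactly the cokernel statement: $\ker \phi_t \cong \coker(H^0(E(t)) \to H^0(\EH(t)))$, which vanishes by the surjectivity just proven in (5).

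**The main obstacle** I anticipate is pinning down that the section's zero locus $Y$ is nonempty and of codimension exactly $2$ — i.e., that the section $s$ does not vanish on a divisor and does not fail to vanish at all. The "no divisorial component" part is standard (otherwise $E(\alpha)$ would have a sub-line-bundle of higher degree, contradicting minimality of $\alpha$ or allowing a further normalization), and "$Y \ne \emptyset$" is forced by $E$ being non-split (a nowhere-vanishing section splits off a trivial summand). I would state these carefully, perhaps citing the standard rank-two/Serre-correspondence setup. A secondary technical point is the surjectivity of $H^0(Q_3, \OO_{Q_3}(m)) \to H^0(Q_2, \OO_{Q_2}(m,m))$ for $m \ge 0$, which follows from the hyperplane restriction sequence $0 \to \OO_{Q_3}(m-1) \to \OO_{Q_3}(m) \to \OO_{Q_2}(m,m) \to 0$ together with $H^1(Q_3, \OO_{Q_3}(m-1)) = 0$, a standard vanishing on the quadric. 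Everything else is bookkeeping with the two restriction sequences (\ref{rH}) and (\ref{rD}) and the displayed identity for $\ker \phi_t$.
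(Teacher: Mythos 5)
Your proposal follows essentially the same route as the paper: the Serre-type sequence $0 \to \Otre(t-\alpha) \to E(t) \to \shI_Z(t+\alpha+c_1) \to 0$ coming from a minimal section with nonempty codimension-two zero locus, the vanishing of $H^0(\shI_Z(m))$ for $m\le 0$ (using $Z\ne\emptyset$ at $m=0$), the restriction of that sequence to $H$, and the surjectivity of $H^0(Q_3,\Otre(m))\to H^0(Q_2,\Odue(m,m))$ to obtain (5) and hence (6). The one slip is in your Step~1: showing that a minimal section restricts to a nonzero section of $\EH(\alpha)$ only reproves the easy inequality $\alpha(\EH)\le\alpha$ and does not rule out $\alpha(\EH)<\alpha$; the reverse inequality needed for item (1) is exactly what your item (4) supplies, since $h^0(Q_2,\Odue(t-\alpha))=0$ for $t<\alpha$, which is also how the paper obtains it.
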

\begin{proof}
By the definition of $\alpha$ and the assumption that $E$ does not split, $E(\alpha)$ has a global section whose zero locus $Z$ has codimension 2 in $Q_3$, hence, by the Serre correspondence, we have for all $t\in\ZZ$ the short exact sequence
$$0 \to \Otre(t-\alpha) \to E(t) \to \shI_Z(t+\alpha+c_1) \to 0$$
which in cohomology gives
$$0 \to H^0(Q_3,\Otre(t-\alpha)) \to H^0(Q_3,E(t)) \to H^0(Q_3,\shI_Z(t+\alpha+c_1)) \to 0$$
and since $t+\alpha+c_1 \le 0$ for every $t\le -\alpha-c_1$ it follows that $H^0(Q_3,E(t)) \cong H^0(Q_3,\Otre(t-\alpha))$ for every $t\le -\alpha-c_1$, in particular we get $h^0(Q_3,E(\alpha))=h^0(Q_3,\Otre)=1$ (these are (1), (2), and (3)).
Now we consider the short exact sequence obtained by tensoring the previous one with $\OH$
$$0 \to \Odue(t-\alpha) \to \EH(t) \to \shI_{Z\cap H}(t+\alpha+c_1) \to 0$$
from which we get
$$0 \to H^0(Q_2,\Odue(t-\alpha)) \to H^0(Q_2,\EH(t)) \to H^0(Q_2,\shI_{Z\cap H}(t+\alpha+c_1)) \to 0$$
so we have $H^0(Q_2,\EH(t)) \cong H^0(Q_2,\Odue(t-\alpha))$ for every $t\le -\alpha-c_1$ (this is (4)), in particular $h^0(Q_2,\EH(\alpha))=h^0(Q_2,\OH)=1$. 
Moreover from the exact sequence
$$0 \to \Otre(-1) \to \Otre \to \Odue \to 0$$
we obtain in cohomology for all $t\in\ZZ$
$$0 \to H^0(Q_3,\Otre(t-1)) \to H^0(Q_3,\Otre(t)) \to H^0(Q_2,\Odue(t)) \to 0,$$
hence we have proved (5). Moreover, (6) follows from (5) through the long cohomology sequence.
\end{proof}

\begin{lemma}\label{van-non-stable}
Let $E$ be a non-stable, $k$-Buchsbaum, normalized, rank 2 vector bundle on $Q_3$ with first relevant level $\alpha$. Then $h^1(Q_3,E(t))=0$ for all $t \le -\alpha-c_1-k$.
\end{lemma}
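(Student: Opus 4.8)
The plan is to transcribe the proof of Lemma~\ref{van} to the non-stable setting, using Lemma~\ref{non-stable} in place of the stability hypothesis to supply the needed injectivity of the multiplication maps.

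First I would dispose of the split case separately: if $E$ is a split bundle, then it is a direct sum of line bundles on $Q_3$, and since $H^i(Q_n,\On(t))=0$ for $0<i<n$ when $n\ge3$, such a bundle has no intermediate cohomology, so $h^1(Q_3,E(t))=0$ for every $t\in\ZZ$ and the statement holds trivially. From now on I assume $E$ is non-split; being also non-stable, it is in the situation of Lemma~\ref{non-stable}.

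Then I would invoke part (6) of Lemma~\ref{non-stable}: the multiplication map $\phi_t\colon H^1(Q_3,E(t-1))\to H^1(Q_3,E(t))$ by a general linear form $x\in H^0(Q_3,\Otre(1))$ is injective for every $t\le -\alpha-c_1$. Fix now any $t\le -\alpha-c_1$. Since $t-k+1\le t-k+2\le\cdots\le t\le -\alpha-c_1$, each of $\phi_t,\phi_{t-1},\dots,\phi_{t-k+1}$ is injective, hence so is the composition $\phi_t\circ\cdots\circ\phi_{t-k+1}$. On the other hand, because $E$ is $k$-Buchsbaum we have $\phi_t\circ\cdots\circ\phi_{t-k+1}\equiv0$ (multiplication by $x^k\in\mm^k$), so formula (\ref{ker}) gives $H^1(Q_3,E(t-k))=\ker(\phi_t\circ\cdots\circ\phi_{t-k+1})$. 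Combining the two, the composition is simultaneously injective and zero, whence $H^1(Q_3,E(t-k))=0$; setting $s:=t-k$ this yields $h^1(Q_3,E(s))=0$ for all $s\le -\alpha-c_1-k$, as required.

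I do not anticipate any real obstacle: the argument is a routine analogue of the stable case, and the only point that must not be overlooked is the separate (trivial) treatment of split bundles, since Lemma~\ref{non-stable} is formulated only for non-split bundles.
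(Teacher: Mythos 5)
Your proof is correct and follows essentially the same route as the paper: injectivity of the multiplication maps for $t\le-\alpha-c_1$ from Lemma~\ref{non-stable}(6), combined with the vanishing of the $k$-fold composition from the $k$-Buchsbaum hypothesis via (\ref{ker}). The only difference is your explicit (and harmless, indeed slightly more careful) separate treatment of the split case, which the paper leaves implicit since split bundles have no intermediate cohomology anyway.
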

\begin{proof}
By Lemma~\ref{non-stable}(6) the multiplication map $\phi_t$ is injective for each $t\le -\alpha-c_1$. Therefore, the composition $\phi_t \circ \cdots \circ \phi_{t-k+1}$ of $k$ successive multiplication maps is injective and also the zero map for each $t\le -\alpha-c_1$, so we get the claim.
\end{proof}

\begin{proposition}
There is no normalized rank 2 vector bundle $E$ on $Q_3$ which is properly $k$-Buchsbaum  with $\alpha\le 1-k$.
\end{proposition}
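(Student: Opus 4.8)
\emph{Proof proposal.} Since $\alpha\le 1-k\le 0$, the bundle $E$ is non-stable, and a split bundle on $Q_3$ has no intermediate cohomology, hence is arithmetically Cohen--Macaulay and in particular $(k-1)$-Buchsbaum, so it cannot be properly $k$-Buchsbaum. Thus I would first reduce to the case where $E$ is non-split and non-stable, so that Lemma~\ref{non-stable} and Lemma~\ref{van-non-stable} are available. From Lemma~\ref{van-non-stable} one gets $h^1(Q_3,E(t))=0$ for all $t\le -\alpha-c_1-k$, and since $\alpha\le 1-k$ this range already contains every $t\le -c_1-1$; dually, by Serre duality on $Q_3$, $h^2(Q_3,E(s))=0$ for all $s\ge \alpha+k-3$, hence for $s\ge -2$. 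Also recall that, on $Q_3$, being $(k-1)$-Buchsbaum is equivalent (via Serre duality) to $\mathfrak m^{\,k-1}H^1_*(Q_3,E)=0$, so it will suffice to prove that $H^1_*(Q_3,E)$ is concentrated in at most $k-1$ consecutive degrees (or is zero), which will contradict the hypothesis that $E$ is properly $k$-Buchsbaum.

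Next I would transfer the problem to the curve furnished by Lemma~\ref{non-stable}: a section of $E(\alpha)$ vanishes on a locally Cohen--Macaulay curve $Z\subset Q_3$ (nonempty because $E$ is non-split), with the Serre sequence
$$0\to\Otre(t-\alpha)\to E(t)\to\shI_Z(t+\alpha+c_1)\to 0 .$$
Because $H^1_*(Q_3,\Otre)=H^2_*(Q_3,\Otre)=0$, this identifies $h^1(E(t))$ with $h^1(\shI_Z(t+\alpha+c_1))$ and realizes $H^2(E(s))$ inside $H^2(\shI_Z(s+\alpha+c_1))\cong H^1(\OO_Z(s+\alpha+c_1))$; moreover the Koszul resolution of $\shI_Z$ gives $\omega_Z\cong\OO_Z(c_1+2\alpha-3)$, so the hypothesis $\alpha\le 1-k$ forces $\omega_Z\cong\OO_Z(e)$ with $e\le -2k-1$. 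Now Lemma~\ref{van-non-stable} (together with $H^0(\Otre(u))=0$ for $u<0$) gives $h^0(\OO_Z(u))=h^1(\shI_Z(u))=0$ for $u\le -k$, and Serre duality on the Cohen--Macaulay curve $Z$ gives $h^1(\OO_Z(u))=h^0(\OO_Z(e-u))$, which vanishes as soon as $e-u\le -k$, i.e.\ for $u\ge e+k\,(\le -k-1)$. Feeding these two vanishings back through the Serre sequence, and using the restriction sequences (\ref{rH}), (\ref{rD}) to control $\EH$ and $\ED$ in the intermediate twists, the aim is to force $h^1(E(t))=0$ for every $t$ outside the $k-1$ degrees $-\alpha-c_1-k+1\le t\le -\alpha-c_1-1$, hence $\mathfrak m^{\,k-1}H^1_*(Q_3,E)=0$, the desired contradiction.

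The main obstacle is exactly this last cohomological ``squeeze''. A direct use of the Castelnuovo--Mumford criterion (Theorem~\ref{CM}) does not work: applied to $E$, or to $\shI_Z$, it requires the vanishing of the top cohomology $H^3(\shI_Z(m))\cong H^3(\Otre(m))$, equivalently $H^0(E(\alpha))=0$, which never holds in the twist range that the $H^1$-vanishing of Lemma~\ref{van-non-stable} allows — so the upper bound on the support of the Rao module of $Z$ has to be extracted from the ``very negative'' line bundle $\omega_Z\cong\OO_Z(e)$, $e\le -2k-1$, via Serre duality on $Z$ and the restriction sequences, the numerical hypothesis $\alpha\le 1-k$ being precisely what makes the lower and upper vanishing ranges overlap. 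Carrying out this step carefully, and checking that the finitely many small values of $\alpha$ (in particular $\alpha=0$, which forces $k=1$) behave as expected, is where the real work lies; everything else is the reduction and the bookkeeping above.
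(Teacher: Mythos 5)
There is a genuine gap: the step you yourself call the ``cohomological squeeze'' is the entire content of the proposition, and the tools you assemble do not reach it. Your plan is to contradict \emph{proper} $k$-Buchsbaumness by showing $H^1_*(Q_3,E)$ lives in at most $k-1$ consecutive degrees, which requires, besides the lower vanishing $h^1(Q_3,E(t))=0$ for $t\le-\alpha-c_1-k$ from Lemma~\ref{van-non-stable}, an \emph{upper} vanishing $h^1(Q_3,E(t))=0$ for $t\ge-\alpha-c_1$, i.e.\ $h^1(Q_3,\shI_Z(u))=0$ for $u\ge0$. Serre duality on $Z$ does not produce this: from $h^0(\OO_Z(u))=0$ for $u\le-k$ and $\omega_Z\cong\OO_Z(e)$ it yields $h^1(\OO_Z(u))=0$ for $u\ge e+k$, which controls $H^2(Q_3,\shI_Z(u))$, hence $H^2_*(Q_3,E)$ --- a group you had already killed by Serre duality on $Q_3$ --- and says nothing about the upper range of $H^1_*(Q_3,\shI_Z)$. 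That module in degree $u\ge0$ is the cokernel of $H^0(Q_3,\Otre(u))\to H^0(Z,\OO_Z(u))$; already at $u=0$ it equals $h^0(\OO_Z)-1$, which is nonzero whenever $Z$ is disconnected, and nothing in your hypotheses excludes that. So the overlap of vanishing ranges you engineer with $\alpha\le1-k$ occurs in the wrong cohomological degree, and the argument cannot be closed along this route without substantial new input.

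The paper's proof runs in the opposite direction and is two lines long: it keeps the same vanishing $h^1(Q_3,E(t))=0$ for $t\le-\alpha-c_1-k$ from Lemma~\ref{van-non-stable}, and then invokes the non-vanishing theorem for non-stable bundles (\cite{BVV}, Theorem~4.3), which forces $h^1(Q_3,E(s))\ne0$ for every $s$ with $-\tfrac{3+c_1}{2}<s\le-\alpha-c_1-1$. The hypothesis $\alpha\le1-k$ is used precisely to place $s=-\alpha-c_1-k$ inside this non-vanishing window, so the same group is both zero and nonzero. In other words, the missing ingredient in your proposal is not a sharper vanishing theorem but a \emph{non}-vanishing theorem; without citing or reproving something like \cite{BVV}, Theorem~4.3 (or, alternatively, Theorem~5.2 of the same paper, used elsewhere in this article for the stable case), the contradiction does not materialize.
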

\begin{proof}
Assume $E$ properly $k$-Buchsbaum with $\alpha\le 1-k$ and $c_1\in\{0,-1\}$. Since $k\ge1$ we get $\alpha\le0$, so $E$ is non-stable. 
By Lemma~\ref{van-non-stable} we have $h^1(Q_3,E(t))=0$ for all $t\le-\alpha-c_1-k$.
On the other hand, the hypothesis $\alpha\le 1-k$ implies $-\frac{3+c_1}{2} < -\alpha-c_1-k \le -\alpha-c_1-1$, so by \cite{BVV}, Theorem~4.3, we must have $h^1(Q_3,E(-\alpha-c_1-k))\ne0$, which is in contradiction with the above vanishing.
\end{proof}

\begin{corollary}\label{alpha-ns}
Let $E$ be a non-stable, normalized, rank 2 vector bundle on $Q_3$. If $E$ is properly $k$-Buchsbaum, then $2-k \le \alpha \le 0$.
\end{corollary}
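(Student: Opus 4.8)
The plan is to split the double inequality $2-k\le\alpha\le 0$ into its two halves and dispatch each separately, since the Corollary is essentially a bookkeeping consequence of the Proposition just established.

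First I would prove the upper bound $\alpha\le 0$. This is purely definitional: by hypothesis $E$ is non-stable and normalized, so $2\alpha+c_1\le 0$ with $c_1\in\{0,-1\}$. If $c_1=0$ this reads $\alpha\le 0$ directly; if $c_1=-1$ it reads $2\alpha\le 1$, and since $\alpha$ is an integer we again get $\alpha\le 0$. No cohomological input is needed here.

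Next, for the lower bound $\alpha\ge 2-k$, I would simply invoke the previous Proposition, which asserts that no normalized rank $2$ vector bundle on $Q_3$ that is properly $k$-Buchsbaum can satisfy $\alpha\le 1-k$. Since our $E$ is, by assumption, normalized and properly $k$-Buchsbaum, we conclude $\alpha>1-k$, hence $\alpha\ge 2-k$ because $\alpha\in\ZZ$. Combining the two bounds yields $2-k\le\alpha\le 0$, as claimed.

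I do not expect a genuine obstacle at this stage: the real work has already been carried out in the Proposition, whose proof runs Lemma~\ref{van-non-stable} (injectivity of the iterated multiplication maps $\phi_t$ coming from Lemma~\ref{non-stable}(6) forces $h^1(Q_3,E(t))=0$ for $t\le -\alpha-c_1-k$) against the non-vanishing statement \cite{BVV}, Theorem~4.3, the hypothesis $\alpha\le 1-k$ being precisely what makes the two ranges of $t$ overlap and produce a contradiction. The only point worth a moment's thought when deducing the Corollary is whether both the non-split and the split non-stable cases are covered: a split bundle on $Q_3$ has no intermediate cohomology, hence is arithmetically Cohen-Macaulay and in particular not properly $k$-Buchsbaum, so it is excluded by the hypothesis and no extra argument is required.
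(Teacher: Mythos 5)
Your proposal is correct and matches the paper's (implicit) argument exactly: the paper states this Corollary without proof precisely because it follows by combining the definition of non-stable normalized bundles ($2\alpha+c_1\le 0$ with $c_1\in\{0,-1\}$ forces $\alpha\le 0$) with the preceding Proposition ruling out $\alpha\le 1-k$. Your added remark that split bundles are arithmetically Cohen-Macaulay and hence never properly $k$-Buchsbaum is a sensible precaution, since Lemma~\ref{non-stable} (used in the Proposition's proof) assumes non-split.
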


\begin{corollary}\label{nsaB}
Every properly arithmetically Buchsbaum rank 2 vector bundle on $Q_3$ is stable.
\end{corollary}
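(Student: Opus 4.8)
The plan is to read this off immediately from Corollary~\ref{alpha-ns}, after a harmless normalization. First I would reduce to the normalized case: every rank~2 bundle $E$ on $Q_3$ has a twist $E(t)$ with $c_1(E(t))=c_1(E)+2t\in\{0,-1\}$, and all the relevant properties are twist-invariant. Indeed $E$ is $k$-Buchsbaum if and only if $E(t)$ is (as already noted after the definition of $k$-Buchsbaum), and $E$ is arithmetically Cohen-Macaulay if and only if $E(t)$ is, so being \emph{properly} arithmetically Buchsbaum is preserved; likewise $2\alpha(E(t))+c_1(E(t))=2\alpha(E)+c_1(E)$, so stability is preserved as well. Hence it suffices to treat a normalized $E$.

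So assume $E$ is normalized and properly arithmetically Buchsbaum, i.e.\ properly $1$-Buchsbaum in the sense of the paper (1-Buchsbaum but not arithmetically Cohen-Macaulay). Suppose, for a contradiction, that $E$ is non-stable. Then Corollary~\ref{alpha-ns} applies with $k=1$ and gives
$$1 = 2-k \le \alpha \le 0,$$
which is absurd. Therefore no non-stable, normalized, properly arithmetically Buchsbaum rank~2 bundle on $Q_3$ exists; undoing the normalization twist, every properly arithmetically Buchsbaum rank~2 vector bundle on $Q_3$ is stable.

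There is no real obstacle here: the entire content has already been absorbed into the non-stable analysis of Lemma~\ref{non-stable}, Lemma~\ref{van-non-stable} and the Proposition preceding Corollary~\ref{alpha-ns}, and the present statement is just the case $k=1$ of that chain, which is why the interval for $\alpha$ collapses to the empty set.
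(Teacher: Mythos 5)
Your proposal is correct and follows exactly the route the paper intends: the corollary is stated without proof precisely because it is the case $k=1$ of Corollary~\ref{alpha-ns} (equivalently, of the preceding Proposition with $\alpha\le 0$), where the interval $2-k\le\alpha\le 0$ becomes empty. Your added remark that stability and the properly $k$-Buchsbaum property are twist-invariant, so one may assume $E$ normalized, is a harmless and accurate tidying of a step the paper leaves implicit.
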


\begin{proposition}\label{EDns}
Let $E$ be a non-stable, normalized, rank 2 vector bundle on a smooth quadric threefold $Q_3$ with first relevant level $\alpha$. Then $\ED$, the restriction of $E$ to a general conic section $D$ of $Q_3$, is isomorphic to $\OD(-\alpha)\oplus\OD(\alpha+c_1)$.
\end{proposition}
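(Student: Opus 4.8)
The plan is to distinguish according to whether $E$ splits. If $E \cong \On(p) \oplus \On(q)$ with $p \ge q$, then $c_1 = p+q$ and the first relevant level is $\alpha = \min\{-p,-q\} = -p$, so that $p = -\alpha$ and $q = \alpha+c_1$; restricting the direct sum to $D$ gives $\ED \cong \OD(p)\oplus\OD(q) \cong \OD(-\alpha)\oplus\OD(\alpha+c_1)$ immediately, so all the content is in the non-split case.

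For $E$ non-split I would use the Serre correspondence exactly as in the proof of Lemma~\ref{non-stable}: $E(\alpha)$ has a global section whose zero locus $Z$ has codimension $2$ in $Q_3$, which gives the exact sequence
$$0 \to \Otre(-\alpha) \to E \to \shI_Z(\alpha+c_1) \to 0.$$
Here $Z$ is a curve (possibly empty or non-reduced), so a general conic section $D$ of $Q_3$ is disjoint from $Z$; hence $\shI_Z\otimes\OD \cong \OD$ and tensoring the above sequence with $\OD$ stays exact, producing on $D \cong \PP^1$ the extension
$$0 \to \OD(-\alpha) \to \ED \to \OD(\alpha+c_1) \to 0.$$
Using $\OD(t) \cong \OPuno(2t)$, its class lies in $\Ext^1_{\PP^1}\bigl(\OPuno(2\alpha+2c_1),\OPuno(-2\alpha)\bigr) \cong H^1(\PP^1,\OPuno(-4\alpha-2c_1))$.

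The crux is that this $\Ext$ group vanishes precisely because of the non-stability hypothesis: $2\alpha+c_1\le 0$ forces $-4\alpha-2c_1 = -2(2\alpha+c_1)\ge 0$, so $H^1(\PP^1,\OPuno(-4\alpha-2c_1))=0$, the sequence splits, and $\ED \cong \OPuno(-2\alpha)\oplus\OPuno(2\alpha+2c_1) \cong \OD(-\alpha)\oplus\OD(\alpha+c_1)$. I do not expect a serious obstacle here; the two points that need a moment of care are that a general conic misses the fixed curve $Z$ (immediate from $\dim Z \le 1$) and that the degree appearing in the $\Ext$ computation is exactly the one pinned down by non-stability.
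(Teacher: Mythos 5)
Your proof is correct and follows essentially the same route as the paper's: the Serre correspondence applied to a section of $E(\alpha)$, restriction to a general conic $D$ disjoint from the zero locus $Z$, and splitting of the resulting extension because $\Ext^1(\OD(\alpha+c_1),\OD(-\alpha))\cong H^1(\PP^1,\OPuno(-2(2\alpha+c_1)))=0$ by non-stability. The only (harmless) difference is that you treat the split case separately, whereas the paper runs a single argument.
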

\begin{proof}
Let $Z$ be the zero locus of  a non-zero global section of $E(\alpha)$, 
which is a subscheme of $Q_3$ of pure dimension 1, then we have the exact sequence
$$0 \to \Otre \to E(\alpha) \to \shI_Z(2\alpha+c_1) \to 0.$$
If we tensor the above sequence by $\OD(-\alpha)$, where $D$ is a general conic section of  $Q_3$, hence not meeting $Z$, we get
$$0 \to \OD(-\alpha) \to \ED \to \OD(\alpha+c_1) \to 0.$$
By hypothesis $2\alpha+c_1 \le 0$, therefore we have
\begin{align*}
\Ext^1(\OD(\alpha+c_1), \OD(-\alpha)) & \cong \Ext^1(\OD,\OD(-2\alpha-c_1)) \\
& \cong H^1(D,\OD(-2\alpha-c_1)) \\ 
& \cong H^0(D,\OD(2\alpha+c_1-1)) = 0
\end{align*}
so the above sequence splits, that is $\ED \simeq \OD(-\alpha)\oplus\OD(\alpha+c_1)$.
\end{proof}

\begin{lemma}\label{cohDns}
Let $E$ be a non-stable, normalized, rank 2 vector bundle on a smooth quadric threefold $Q_3$ with first relevant level $\alpha$ and let $D$ be a general conic section of $Q_3$.
Then for every integer $t\ge0$:
$$h^0(D,\ED(t)) = \begin{cases} 0 & \text{for\ \ } t \le \alpha-1 \\
2t-2\alpha+1 & \text{for\ \ } \alpha\le t\le -\alpha-c_1-1 \\
4t+2c_1+2 & \text{for\ \ } t\ge-\alpha-c_1 \end{cases}$$
\end{lemma}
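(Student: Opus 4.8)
The plan is to combine Proposition~\ref{EDns} with the explicit cohomology of line bundles on the conic $D\cong\PP^1$ under the identification $\OD(t)\simeq\OPuno(2t)$. By Proposition~\ref{EDns} we have a splitting $\ED\simeq\OD(-\alpha)\oplus\OD(\alpha+c_1)$, hence for every $t\in\ZZ$
$$\ED(t)\simeq\OD(t-\alpha)\oplus\OD(t+\alpha+c_1)\simeq\OPuno(2t-2\alpha)\oplus\OPuno(2t+2\alpha+2c_1).$$
Thus $h^0(D,\ED(t))=h^0(\PP^1,\OPuno(2t-2\alpha))+h^0(\PP^1,\OPuno(2t+2\alpha+2c_1))$, and we just need to track when each summand contributes. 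Recall $h^0(\PP^1,\OPuno(d))=d+1$ if $d\ge0$ and $0$ otherwise.

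The first summand $\OPuno(2t-2\alpha)$ has nonnegative degree exactly when $t\ge\alpha$, contributing $2t-2\alpha+1$; the second summand $\OPuno(2t+2\alpha+2c_1)$ has nonnegative degree exactly when $t\ge-\alpha-c_1$, contributing $2t+2\alpha+2c_1+1$. Since $E$ is non-stable and normalized we have $\alpha\le 0$, and $2\alpha+c_1\le0$ forces $\alpha\le-\alpha-c_1$, so the two thresholds $t=\alpha$ and $t=-\alpha-c_1$ are correctly ordered (with equality only in the semistable borderline case $c_1=\alpha=0$, which is consistent with both formula branches). Then I would split into the three ranges: for $t\le\alpha-1$ neither summand contributes, giving $0$; for $\alpha\le t\le-\alpha-c_1-1$ only the first summand contributes, giving $2t-2\alpha+1$; for $t\ge-\alpha-c_1$ both contribute, giving $(2t-2\alpha+1)+(2t+2\alpha+2c_1+1)=4t+2c_1+2$, which matches Lemma~\ref{cohD}.

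There is essentially no obstacle here: this is a direct computation once Proposition~\ref{EDns} is in hand. The only small points to be careful about are the case analysis on the two degree thresholds and checking the boundary behavior (that the middle branch is nonempty only when $\alpha\le-\alpha-c_1-1$, and that when it is empty the outer two branches still agree at the transition). I would also note in passing that $h^1(D,\ED(t))=0$ for $t\ge0$, since in that range $2t-2\alpha\ge0$ and $2t+2\alpha+2c_1$ could be negative only for small $t$, but for $t\ge\max(0,-\alpha-c_1)$ both degrees are $\ge-1$; more simply, the statement as phrased only claims the $h^0$ values, so no further work is needed beyond the line-bundle bookkeeping above.
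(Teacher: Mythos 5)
Your proposal is correct and follows exactly the paper's route: the paper also deduces the splitting $\ED\simeq\OD(-\alpha)\oplus\OD(\alpha+c_1)$ from Proposition~\ref{EDns}, uses $\OD(t)\simeq\OPuno(2t)$, and leaves the rest as ``a simple computation,'' which is precisely the line-bundle bookkeeping you carry out. Your explicit tracking of the two degree thresholds $t=\alpha$ and $t=-\alpha-c_1$ (and the check that they are correctly ordered by non-stability) just fills in the details the paper omits.
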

\begin{proof}
By Proposition~\ref{EDns} we have $\ED \simeq \OD(-\alpha)\oplus\OD(\alpha+c_1)$, so, taking into account that $\OD(t)\simeq\OPuno(2t)$ for all $t\in\ZZ$, with a simple computation we get the claim.
\end{proof}

\section{Arithmetically Buchsbaum rank 2 bundles}

In the present section we give the proof of Theorem~\ref{main}, but dividing it into two statements: Theorem~\ref{aBonQ3} and Theorem~\ref{aBonQn}.
\\
First, we analyze what happens on a smooth quadric threefold $Q_3\subset\Pfour$, starting with
a technical lemma which we need in the following.

\begin{lemma}\label{coho}
Let $E$ be an arithmetically Buchsbaum rank 2 vector bundle on $Q_3$. If $h^2(Q_3,E(t-1))=0$, then $H^1(Q_3,E(t)) \cong H^1(Q_2,E_H(t))$ through the restriction map and moreover the multiplication map 
$$H^1(Q_2,\EH(t))\to H^1(Q_2,\EH(t+1))$$
is the zero map.
\end{lemma}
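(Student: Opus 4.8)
The plan is to analyze the commutative ladder formed by the restriction sequence (\ref{rH}) for $E(t-1)$ and $E(t)$, together with the multiplication by the linear form $x$ on all three terms. Concretely, I would write the two restriction sequences
\begin{equation*}
0 \to E(t-1) \to E(t) \to \EH(t) \to 0, \qquad 0 \to E(t) \to E(t+1) \to \EH(t+1) \to 0,
\end{equation*}
and compare them via multiplication by $x$, which sends the first to the second. On $H^1$ this produces a commutative diagram whose rows are pieces of the long exact sequences
\begin{equation*}
H^1(Q_3,E(t-1)) \xrightarrow{\phi_t} H^1(Q_3,E(t)) \xrightarrow{r_t} H^1(Q_2,\EH(t)) \to H^2(Q_3,E(t-1))
\end{equation*}
and the analogous one for $t+1$, with vertical maps given by multiplication by $x$. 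The hypothesis $h^2(Q_3,E(t-1))=0$ makes the restriction map $r_t$ surjective, so $H^1(Q_2,\EH(t)) \cong \coker \phi_t = H^1(Q_3,E(t))/\operatorname{im}\phi_t$.

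The first step is to identify $H^1(Q_3,E(t))$ with $H^1(Q_2,\EH(t))$. Since $E$ is arithmetically Buchsbaum, $\mm\cdot H^1_*(Q_3,E)=0$, so in particular $\phi_t \equiv 0$: multiplication by the linear form $x$ kills $H^1(Q_3,E(t-1))$. Hence $\operatorname{im}\phi_t=0$, and combined with the surjectivity of $r_t$ from the vanishing $h^2(Q_3,E(t-1))=0$, the map $r_t\colon H^1(Q_3,E(t))\to H^1(Q_2,\EH(t))$ is an isomorphism. That gives the first assertion.

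For the second assertion, I would use the commutative square
\begin{equation*}
\begin{array}{ccc}
H^1(Q_3,E(t)) & \xrightarrow{\;r_t\;} & H^1(Q_2,\EH(t)) \\
\big\downarrow{\scriptstyle\cdot x} & & \big\downarrow{\scriptstyle\cdot x} \\
H^1(Q_3,E(t+1)) & \xrightarrow{\;r_{t+1}\;} & H^1(Q_2,\EH(t+1))
\end{array}
\end{equation*}
whose left vertical arrow is $\phi_{t+1}$, which again is the zero map because $E$ is arithmetically Buchsbaum. Since $r_t$ is an isomorphism (hence surjective), any class in $H^1(Q_2,\EH(t))$ lifts to $H^1(Q_3,E(t))$, is sent down to $0$ by $\phi_{t+1}=0$, and then mapped by $r_{t+1}$ to its image under multiplication by $x$; commutativity forces that image to be $0$. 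Therefore the multiplication map $H^1(Q_2,\EH(t))\to H^1(Q_2,\EH(t+1))$ is zero, as claimed.

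The only mildly delicate point — and the place I would be most careful — is checking that the vertical maps in the ladder are genuinely induced by the same multiplication by $x$ and that the squares commute; this is a standard functoriality statement for the restriction sequence associated to a section of $\Otre(1)$, but it is exactly what makes the argument work. Everything else is a direct diagram chase using the two inputs: the vanishing $h^2(Q_3,E(t-1))=0$ (surjectivity of restriction on $H^1$) and the arithmetically Buchsbaum hypothesis (vanishing of the internal multiplication maps $\phi_t$ and $\phi_{t+1}$).
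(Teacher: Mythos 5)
Your treatment of the first assertion is correct and agrees with the paper: surjectivity of the restriction map $r_t\colon H^1(Q_3,E(t))\to H^1(Q_2,\EH(t))$ comes from $h^2(Q_3,E(t-1))=0$, and injectivity from the fact that the arithmetically Buchsbaum hypothesis kills the connecting multiplication $\phi_t$, so $\ker r_t=\operatorname{im}\phi_t=0$. The second half, however, has a genuine gap in the choice of linear form for the vertical arrows of your ladder: you take them to be multiplication by the \emph{same} section $x$ that cuts out $H$ (you explicitly identify the left vertical arrow with $\phi_{t+1}$, which is by definition the map coming from the restriction sequence for $x$). But $x$ restricts to the zero section of $\Odue(1,1)$ on $H=\{x=0\}$, so your right-hand vertical arrow $\EH(t)\to\EH(t+1)$ is the zero map already at the level of sheaves. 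With that choice the conclusion is trivially true but vacuous: it says nothing about the $R$-module structure of $H^1_*(Q_2,\EH)$, and in particular it cannot be used as the lemma is used in the proof of Theorem~\ref{aBonQ3}, where one needs multiplication by the (nonzero) linear form cutting out the conic $D$ on $Q_2$ to annihilate $H^1(Q_2,\EH(t))$ in the sequence coming from (\ref{rD}).

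The paper's proof fixes exactly this point by taking the vertical maps to be multiplication by a \emph{general} linear form $y$ \emph{not} defining $H$. The ladder still commutes (horizontal maps are multiplication by $x$, vertical ones by $y$, and $xy=yx$); the left vertical arrow $H^1(Q_3,E(t))\to H^1(Q_3,E(t+1))$ is still zero, because the Buchsbaum hypothesis kills multiplication by \emph{every} element of $\mm$, not only by $x$; and the right vertical arrow is now multiplication by the nonzero restriction $y\vert_H$, i.e.\ by a general linear form of $Q_2$. With that single correction your diagram chase --- lift along the surjective $r_t$, push down by the zero map, invoke commutativity --- is precisely the paper's argument and completes the proof.
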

\begin{proof}
Consider the following commutative diagram
$$\begin{matrix}
H^1(Q_3,E(t)) & \xrightarrow{\cong} & H^1(Q_2,\EH(t)) \\
\noalign{\vspace{4pt}}
{}\quad\downarrow & & {}\quad\downarrow  \\
\noalign{\vspace{4pt}}
H^1(Q_3,E(t+1)) & \to & H^1(Q_2,\EH(t+1)) 
\end{matrix}$$
where the horizontal maps are the restriction maps obtained from restriction sequence (\ref{rH}), 
while the vertical maps are the multiplication maps by a general linear form not defining the hyperplane section $H\cong Q_2$, so by the hypotheses it follows that the left vertical map is the zero map.
\end{proof}

Now we are able to give the full classification of arithmetically Buchsbaum rank 2 vector bundles on a quadric threefold $Q_3$.

\begin{theorem}\label{aBonQ3}
Let $E$ be an arithmetically Buchsbaum, normalized, rank 2 vector bundle on $Q_3$. 
Then $E$ is one of the following:
\begin{enumerate}
\item $E$ is a split bundle;
\item $E$ is stable with $c_1=-1$, $c_2=1$, i.e.\ $E$ is a spinor bundle;
\item $E$ is stable with $c_1=-1$, $c_2=2$, i.e.\ $E$ is associated to two skew lines or to a double line;
\item $E$ is stable with $c_1=-1$, $c_2=3$, and $H^0(Q_3,E(1))=0$, i.e.\ $E$ is associated to a smooth elliptic curve of degree $7$ in $Q_3\subset\Pfour$.
\end{enumerate}
Therefore $E$ is either arithmetically Cohen-Macaulay, cases (1) and (2), or properly arithmetically Buchsbaum, cases (3) and (4), with only one non-zero first cohomology group.
\end{theorem}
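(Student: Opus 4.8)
The plan is to split the argument according to whether $E$ is stable or non-stable, and within the stable case, according to the value of $c_1$ and of the first relevant level $a$ of $\EH$.

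\textbf{Step 1: the non-stable case.} If $E$ is non-stable, then by Corollary~\ref{nsaB} every properly arithmetically Buchsbaum bundle on $Q_3$ is stable; hence a non-stable arithmetically Buchsbaum bundle must be arithmetically Cohen–Macaulay, and then by Theorem~\ref{acm} (or the splitting criterion, Theorem~\ref{hsc}, together with Corollary~\ref{ACMnonsplit}) it is a direct sum of line bundles, i.e.\ split. This is case (1). So from now on I may assume $E$ is stable, hence $\alpha\ge1$ and, by the Restriction Theorem~\ref{rectheo}, $\EH$ is stable on $Q_2$, so $a\ge1$ as well.

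\textbf{Step 2: bounding $a$ and $c_2$ via the $1$-Buchsbaum hypothesis.} Apply Lemma~\ref{ak} with $k=1$: this forces $a=1$ if $c_1=-1$, while if $c_1=0$ it would force $a\le0$, contradicting $a\ge1$. Hence $c_1=-1$ and $a=\alpha=1$ (using $a\le\alpha$ and Lemma~\ref{non-stable}-type constraints, or directly that $1=a\le\alpha$ and stability of $\EH$ pin down $\alpha$). If moreover $\alpha\ge2$ one gets $h^0(Q_3,E(1))=0$. Next I compute cohomology: by Lemma~\ref{van} with $k=1$, $h^1(Q_3,E(t))=0$ for $t\le a-2=-1$, and by Serre duality $h^2(Q_3,E(t))=h^1(Q_3,E(-3-c_1-t))=h^1(Q_3,E(-2-t))$, which vanishes for $-2-t\le-1$, i.e.\ $t\ge-1$. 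So $h^2(Q_3,E(t-1))=0$ for $t\ge0$, and Lemma~\ref{coho} then gives $H^1(Q_3,E(t))\cong H^1(Q_2,\EH(t))$ for $t\ge0$ with the multiplication $H^1(Q_2,\EH(t))\to H^1(Q_2,\EH(t+1))$ zero. Combined with the vanishing for $t\le-1$, the module $H^1_*(Q_3,E)$ is concentrated in a single degree (degree $0$, after normalization): this is the "only one non-zero first cohomology group" claim. The number $h^1(Q_3,E(0))=h^1(Q_2,\EH(0))$ is computed from $\chi$ and Riemann–Roch on $Q_2\cong\PP^1\times\PP^1$ using Lemma~\ref{cohD}, and one reads off that $c_2\in\{1,2,3\}$: $c_2=1$ gives $h^1=0$, i.e.\ arithmetically Cohen–Macaulay, hence by Corollary~\ref{ACMnonsplit} a spinor bundle (case (2)); $c_2=2$ and $c_2=3$ give the properly Buchsbaum cases. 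The step I expect to be most delicate is making the numerology airtight — showing that $c_2\ge4$ is impossible, which requires controlling $h^0(Q_3,E(1))$ and $h^1(Q_2,\EH(1))$ simultaneously (the restriction sequence (\ref{rD}) on the conic and Lemma~\ref{cohD} should do it, since the single-degree concentration forces $h^1(Q_2,\EH(1))=0$, and then an Euler-characteristic computation on $Q_2$ bounds $c_2$).

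\textbf{Step 3: geometric identification of the zero loci.} Finally, for $c_1=-1$, a general section of $E(1)$ (which exists since $\alpha\le1$, with the caveat that in case $c_2=3$ we are in the subcase $\alpha=2$ so one instead uses a section of $E(2)$, or rather treats $c_2=2$ and $c_2=3$ separately) vanishes on a curve $Y\subset Q_3$ whose degree and arithmetic genus are computed from the Chern classes via the standard formulas; for $c_2=2$ one gets two skew lines or a double line, and for $c_2=3$ with $H^0(Q_3,E(1))=0$ one gets a smooth elliptic curve of degree $7$. This identification uses the liaison/Serre-correspondence exact sequence $0\to\Otre\to E(1)\to\shI_Y(2+c_1)\to0$ already exploited in Lemma~\ref{non-stable}, now on the stable side, together with the known cohomology of $\shI_Y$ forced by the single-degree concentration of $H^1_*(E)$; the case $c_2=1$ being a spinor bundle was already settled in Corollary~\ref{ACMnonsplit}. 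The concluding sentence of the theorem ("$E$ is either arithmetically Cohen–Macaulay, cases (1) and (2), or properly arithmetically Buchsbaum, cases (3) and (4), with only one non-zero first cohomology group") then follows by collecting the cohomological observations of Step 2.
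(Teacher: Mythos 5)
Your proposal follows essentially the same route as the paper's proof: non-stable implies split via Corollary~\ref{nsaB} and Theorem~\ref{acm}; Lemma~\ref{ak} with $k=1$ together with the Restriction Theorem~\ref{rectheo} rules out $c_1=0$ and forces $c_1=-1$, $a=1$; Lemma~\ref{van}, Serre duality, Lemma~\ref{coho} and the conic restriction sequence (\ref{rD}) with Lemma~\ref{cohD} concentrate $H^1_*(Q_3,E)$ in degree $0$; the Euler characteristic $\chi(E(1))=6-2c_2=h^0(Q_3,E(1))\ge0$ bounds $c_2\le3$; and the Serre correspondence identifies the zero loci. The only presentational slip is the passing claim that $a=\alpha=1$ (the paper pins down $\alpha\in\{1,2\}$, with $\alpha=2$ exactly when $c_2=3$, via \cite{BVV} or, as you effectively do, via $h^0(Q_3,E(1))=6-2c_2$), and the phrase deriving the single-degree concentration should be ordered so that the vanishing $h^1(Q_2,\EH(t))=0$ for $t\ge1$ comes first from (\ref{rD}) and Lemma~\ref{cohD}; you correct for both in the sequel, so the argument is sound and matches the paper's.
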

\begin{proof}
Let $E$ be an arithmetically Buchsbaum, normalized, rank 2 vector bundle on $Q_3$ and let $H$ and $D$ be general hyperplane and conic sections of $Q_3$. Since the rank 2 arithmetically Cohen-Macaulay 
vector bundles on $Q_3$ are either split or spinor bundles (see Theorem~\ref{acm}), we can assume that $E$ is properly arithmetically Buchsbaum. 
We set $\alpha=\alpha(E)$ and $a=\alpha(\EH)$.
By Corollary~\ref{nsaB} $E$ must be stable, and moreover, by Lemma~\ref{ak} and Theorem~\ref{rectheo} there is no stable properly arithmetically Buchsbaum bundle with $c_1=0$.
Therefore we may assume, always by Lemma~\ref{ak}, that $E$ is stable with $c_1=-1$ and $a=1$. 
We have $h^1(Q_3,E(t))=0$ for all $t\le -1$, by Lemma~\ref{van}, and $h^2(Q_3,E(t))=0$ for all $t\ge -1$, by Serre duality. 
Moreover, by Lemma~\ref{h1stable} $h^1(Q_3,E)\ne0$. Thanks to the vanishing of $2$-cohomology for $t\ge-1$, the multiplication map $H^1(Q_2,\EH(t))\to H^1(Q_2,\EH(t+1))$, by Lemma~\ref{coho}, is the zero map for every $t\ge0$. From the restriction sequence (\ref{rD}) we get in cohomology the exact sequence 
$$0 \to H^1(Q_2,\EH(t)) \to H^1(D,\ED(t))=0$$
for all $t\ge1$ (taking into account Lemma~\ref{cohD}), so we obtain $h^1(Q_2,\EH(t))=0$ for all $t\ge1$. Now from restriction sequence (\ref{rH}) we get in cohomology the exact sequence
$$0 \to H^1(Q_3,E(t)) \to H^1(Q_2,\EH(t))=0$$
for every $t\ge1$, hence $h^1(Q_3,E(t))=0$ for all $t\ge 1$.
Therefore $E$ must have $h^1(Q_3,E(t))=0$ for all $t\ne0$. 
\\
Since $\alpha\ge a=1$, we have either $\alpha=1$ or $\alpha>1$; in this last event we have  $h^1(Q_3,E(\alpha-2))\ne0$, because of \cite{BVV}, Theorem 5.2, and therefore we must have $\alpha=2$.
So we have to analyze two possibilities: $\alpha=1$ and $\alpha=2$. In both cases it holds  $6-2c_2=\chi(E(1))=h^0(Q_3,E(1))\ge0$, that is $c_2\le3$, therefore, being $E$ properly arithmetically Buchsbaum, we obtain $c_2=2$ if and only if $\alpha=1$, and $c_2=3$ if and only if $\alpha=2$.
\\
If $c_2=2$ and $\alpha=1$, then we get $h^1(Q_3,E)=1$ and $h^0(Q_3,E(1))=2$. 
By the Serre correspondence we have an exact sequence on $Q_3$ like the following
$$0\to \OO \to E(1) \to \shI_Z(1) \to 0$$
where $Z$ is a non-empty locally complete intersection curve of degree $2$ with  $\omega_Z\simeq\OO_Z(-2)$, arithmetic genus $-1$, and $h^1(\Pfour,\shI_{Z,\Pfour}(t))=h^1(Q_3,\shI_Z(t))=0$ for all $t\ne0$ and $h^1(\Pfour,\shI_{Z,\Pfour})=h^1(Q_3,\shI_Z)=1$, so $E$ is a vector bundle associated to two skew lines or a double line.
\\
If $c_2=3$ and $\alpha =2$, then we get $h^1(Q_3,E)=2$ and $h^0(Q_3,E(1))=0$.
By the Serre correspondence $E(2)$ fits into an extension of the following type
$$0\to \OO \to E(2) \to \shI_C(3) \to 0$$
where $C$ is a non-empty locally complete intersection curve of degree $7$ and arithmetic genus $1$, with $\omega_C\simeq\OO_C$ and $h^1(\Pfour,\shI_{C,\Pfour}(t))=h^1(Q_3,\shI_C(t))=0$ for all $t\ne1$ and $h^1(\Pfour,\shI_{C,\Pfour}(1))=h^1(Q_3,\shI_C(1))=2$, moreover $E(2)$ is generated by global sections by Castelnuovo-Mumford criterion (see Theorem~\ref{CM}), so $\shI_C(3)$ is globally generated too, hence the zero locus of a general section of $E(2)$ is smooth. Therefore $E$ is a vector bundle associated to a smooth elliptic curve of degree $7$ in $Q_3\subset\Pfour$.
\end{proof}

Now we can state and prove the classification of arithmetically Buchsbaum rank 2 vector bundles on a quadric hypersurface $Q_n\subset\PP^{n+1}$, with $n\ge4$.

\begin{theorem}\label{aBonQn}
The only indecomposable, arithmetically Buchsbaum, normalized, rank 2 vector bundle $F$ on $Q_n$, $n\ge4$, are the following:
\begin{enumerate}
\item for $n=5$, $F$ is a Cayley bundle, i.e.\ $F$ is a bundle with $c_1=-1$, $c_2=2$;
\item for $n=4$, $F$ is a spinor bundle 
or it has $c_1=-1$, $c_2=(1,1)$, i.e.\ $F$ is the restriction of a Cayley bundle to $Q_4$.
\end{enumerate}
Moreover, for $n\ge6$ no such bundle exists.
\end{theorem}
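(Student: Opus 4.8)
The plan is to push the threefold classification of Theorem~\ref{aBonQ3} up to higher quadrics by repeated restriction to general linear sections, and then to recognize the surviving bundles through Theorems~\ref{acm}, \ref{Cb3} and~\ref{Cb2}. We may assume $F$ is normalized. If $F$ is $0$-Buchsbaum (arithmetically Cohen--Macaulay), then Theorem~\ref{acm} forces $F$ to be a direct sum of line bundles and twisted spinor bundles, hence --- being indecomposable of rank $2$ --- a twisted spinor bundle; since the spinor bundles on $Q_n$ have rank larger than $2$ as soon as $n\ge 5$ (see \cite{Ot2}), this can happen only for $n=4$, giving exactly the two spinor bundles with $c_1=-1$ and $c_2=(1,0)$ or $(0,1)$. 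So from now on $F$ is properly arithmetically Buchsbaum, in particular not arithmetically Cohen--Macaulay.

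The cohomological engine is the following. Since $F$ is $1$-Buchsbaum, multiplication by a general linear form kills every module $H^p_*(Q',F\vert_{Q'})$ with $1\le p\le\dim Q'-1$, so the restriction sequence gives injections $H^p(Q',F\vert_{Q'}(t))\hookrightarrow H^p(Q'',F\vert_{Q''}(t))$ whenever $Q''$ is a general hyperplane section of $Q'$. Feeding in the cohomology table of the threefold bundles of Theorem~\ref{aBonQ3} --- intermediate cohomology concentrated in $H^1$ at the single twist $t=0$ together with its Serre dual $H^2$ at $t=-2$ in the properly Buchsbaum cases, and vanishing altogether in the Cohen--Macaulay ones --- and using Serre duality ($F^\vee\cong F(-c_1)$) at each stage, an induction on $\dim Q'$ shows that \emph{every} arithmetically Buchsbaum rank $2$ bundle $G$ on $Q_m$, $m\ge 4$, has its intermediate cohomology concentrated in $H^1$ at $t=0$ and $H^{m-1}$ at $t=-m+1$, both of dimension $h=h^1(Q_m,G)$; in particular such a $G$ fails to be arithmetically Cohen--Macaulay if and only if $H^1(Q_m,G)\ne 0$, and this non-vanishing descends through the chain of injections all the way to $Q_3$. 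Hence $F\vert_{Q_3}$ is \emph{properly} arithmetically Buchsbaum, so by Theorem~\ref{aBonQ3} it is stable with $c_1=-1$ and $c_2\vert_{Q_3}\in\{2,3\}$; therefore $c_1(F)=-1$, and a standard argument on destabilizing sub-line-bundles shows that stability propagates down the chain, so $F$ and all its general linear sections are stable.

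Now one computes. By the concentration above, for $m=4,5$ we have $\chi(Q_m,F)=-h$ and $\chi(Q_m,F(t))=h^0(Q_m,F(t))$ for $t=-1,1$, all remaining cohomology of $F(t)$ vanishing there by stability and the concentration; comparing these values with the Hilbert polynomial of a normalized rank $2$ bundle then forces the second Chern class. On $Q_4$ one is left with $c_2=(1,1)$, ruling out $c_2=(2,0),(0,2)$ and the alternative $c_2\vert_{Q_3}=3$; on $Q_5$ one first restricts to $Q_4$, obtaining $c_2(F)\vert_{Q_4}=(1,1)$ by the $Q_4$ case, which determines $c_2(F)$. Being stable with these Chern classes, $F$ is the restriction of a Cayley bundle to $Q_4$ by Theorem~\ref{Cb3}, resp.\ a Cayley bundle on $Q_5$ by Theorem~\ref{Cb2}. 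Finally, if $n\ge 6$ and $F$ is properly arithmetically Buchsbaum on $Q_n$, restrict to a general $Q_6\subset Q_n$ and then to $Q_5\subset Q_6$: by the engine above $F\vert_{Q_5}$ is again properly arithmetically Buchsbaum, hence a Cayley bundle, so $F\vert_{Q_6}$ is a rank $2$ extension of a Cayley bundle to $Q_6$, contradicting the non-extendability statement in Theorem~\ref{Cb2}. Together with the Cohen--Macaulay case (no indecomposable such bundle for $n\ge 5$) this proves that no indecomposable arithmetically Buchsbaum rank $2$ bundle exists on $Q_n$ for $n\ge 6$.

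The main obstacle is the Chern class computation on $Q_4$: because $c_2$ lies in a rank two Chow group, both its total degree and the self-intersection $c_2^2$ enter the Euler characteristic, so pinning it down to $(1,1)$ --- and in particular excluding that $F\vert_{Q_3}$ is the degree $7$ elliptic-curve bundle or that $c_2(F)$ is asymmetric --- requires careful Riemann--Roch bookkeeping combined with the vanishing constraints coming from stability and from the cohomology concentration. A secondary point to watch is that non-vanishing of $H^1$ and stability genuinely propagate between $Q_3$, $Q_4$ and $Q_5$ under restriction rather than degenerating; this is exactly where the $1$-Buchsbaum hypothesis is used essentially, through the injectivity of the restriction maps on intermediate cohomology.
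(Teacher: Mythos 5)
Your proposal is correct, and its skeleton coincides with the paper's: restrict to a general $Q_3$, invoke Theorem~\ref{aBonQ3}, and climb back up using Ottaviani's extension and non-extension results for spinor and Cayley bundles (Theorems~\ref{Cb2} and~\ref{Cb3}), with the case $n\ge6$ killed by the non-extendability of Cayley bundles to $Q_6$. Where you genuinely diverge is in the one case that requires real work, namely excluding an arithmetically Buchsbaum extension $F$ to $Q_4$ of the bundle $E$ on $Q_3$ with $c_1=-1$, $c_2=3$. The paper deliberately does \emph{not} use the Buchsbaum hypothesis on $H^2_*(Q_4,F)$: it computes the cohomology of $F$ directly from the restriction sequence (finding $h^2(Q_4,F(-1))=h^2(Q_4,F(-2))=2$), shows $F$ is $2$-regular, takes the zero locus of a general section of $F(2)$ --- a degree $7$ del Pezzo surface $S$ with $\omega_S\simeq\OO_S(-1)$ --- and derives the contradiction $8=h^0(S,\OO_S(1))\le h^0(\OO_{Q_4}(1))=6$. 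You instead exploit the full arithmetically Buchsbaum hypothesis, in particular $\mm\cdot H^2_*(Q_4,F)=0$, to force $H^2_*(Q_4,F)=0$ via your injections together with Serre duality; then $\chi(F(-1))=0$ and $\chi(F(t))-\chi(F(t-1))=\chi(E(t))$ give $\chi(F(1))=\chi(E)+\chi(E(1))=7-3c_2(E)$, which must equal $h^0(Q_4,F(1))\ge0$ and so kills $c_2(E)=3$; in the surviving case the refined Riemann--Roch on $Q_4$, where $c_2^2=a^2+b^2$ enters, pins down $c_2(F)=(1,1)$. Your route is shorter and purely numerical, and the ``careful bookkeeping'' you defer does close as indicated. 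What it buys less of is generality: the paper's argument uses only that the \emph{first} cohomology module is Buchsbaum, which is precisely the strengthening recorded in the Remark following the theorem, and that refinement is not recoverable from your computation, since without the hypothesis on $H^2_*$ one really does have $h^2(Q_4,F(-1))=2\ne0$ and hence $\chi(F(-1))\ne0$.
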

\begin{proof}
Let $F$ be a rank $2$ vector bundle on $Q_n$ as in the statement. Take a general $3$-dimensional linear section $Q_3$ of $Q_n$ and set $E:=F\vert_{Q_3}$. By the hypothesis $E$ is an indecomposable, arithmetically Buchsbaum, normalized, rank $2$ vector bundle on the quadric threefold $Q_3$, so the only possibilities for $E$ are listed in Theorem~\ref{aBonQ3}. We analyze each case separately. \\
(a) If $E$ is a spinor bundle on $Q_3$, then $E$ extends to a spinor bundle on $Q_4$, i.e.\ a stable bundle with $c_1=-1$ and $c_2=(1,0)$ or $c_2=(0,1)$, but does not extend to any further bundle on $Q_n$ for $n\ge5$ (see \cite{Ot2}, Theorem~2.1).
\\
(b) If $E$ is stable with $c_1=-1$, $c_2=2$, then $E$ extends to a vector bundle on $Q_4$ with Chern classes $c_1=-1$, $c_2=(1,1)$, and even to one on $Q_5$ with $c_1=-1$, $c_2=2$ (a Cayley bundle), but to no further bundle on any $Q_n$, $n\ge6$ (see Definition~\ref{Cb1} and Theorem~\ref{Cb2} and \ref{Cb3}).
\\
(c) If $E$ is stable with $c_1=-1$, $c_2=3$, and $a=2$, then $E$ does not extend to any arithmetically Buchsbaum bundle $F$ on $Q_4$. In fact, assume there exists an extension $F$ of the bundle $E$ on $Q_4$, i.e.\ $E=F\vert_{Q_3}$. By the proof of Theorem~\ref{aBonQ3} we know that $h^0(Q_3,E(2))\ne0$, while $h^1(Q_3,E(t))=0$ for all $t\ne0$ and $h^1(Q_3,E)=2$.
By the assumption, the two vector bundle $F$ and $E$ fit into a restriction sequence like 
$$0 \to F(-1) \to F \to E \to 0,$$
so we get in cohomology the exact sequence
$$0 \to H^1(Q_4,F(t-1)) \to H^1(Q_4,F(t)) \to 0$$
for all $t\le-1$ and $t\ge 1$, since $h^0(Q_4,F(t))=0$ for all $t\le 1$. It follows that $h^1(Q_4,F(t))=h^3(Q_4,F(t))=0$ for all $t\in\ZZ$. Similarly, we get $h^2(Q_4,F(t))=0$ for all $t\le-3$ and $t\ge0$, and also $h^2(Q_4,F(-1))=h^2(Q_4,F(-2))=2$. Therefore we have $h^1(Q_4,F(1))=h^2(Q_4,F)=h^3(Q_4,F(-1))=0$ and $h^4(Q_4,F(-2))=h^0(Q_4,F(-1))=0$, so by the  Castelnuovo-Mumford criterion (see Theorem~\ref{CM}) $F$ is $2$-regular, hence $F(2)$ is generated by global sections and therefore the zero locus of a general global section of $F(2)$ is a smooth surface $S$ of degree $7$ and we have the exact sequence
$$0 \to \OO_{Q_4} \to F(2) \to \shI_S(3) \to 0.$$
Since $\det(F(2))\simeq\OO_{Q_4}(3)$ and $\omega_{Q_4}\simeq\OO_{Q_4}(-4)$, the adjuction formula gives $\omega_S\simeq\OO_S(-1)$. Thus $S$ is an anticanonically embedded del Pezzo surface. By the above exact sequence we see that $h^1(\PP^4,\shI_S(t))=h^1(Q_4,F(1))=0$ for all $t\in\ZZ$. Using Riemann-Roch on the surface $S$ we obtain $h^0(S,\OO_S(1))=\chi(\OO_S(1)) = (\OO_S(1)\cdot\OO_S(2))/2 + \chi(\OO_S) = 7 + 1 = 8$. On the other hand, using the structure sequence $0 \to \shI_S \to \OO_{Q_4} \to \OO_S \to 0$, we have $h^0(S,\OO_S(1)) \le h^0(\PP^4,\OO_{Q_4}(1)) = 6$, which is absurd. Therefore, there exists no arithmetically  Buchsbaum bundle $F$ on $Q_n$, $n\ge4$, such that its restriction to a general 3-dimensional linear section $Q_3$ is a stable bundle with $c_1=-1$, $c_2=3$, and $a=2$.
\end{proof}

\begin{remark}
Notice that in Theorem~\ref{aBonQn} the hypothesis that $F$ is an arithmetically Buchsbaum bundle can be weakened to the following one: we can ask that $F$ is a bundle with \emph{1-Buchsbaum first cohomology}, meaning that for every integer $q$ such that $3\le q\le n$ it holds
$$\mm \cdot H^1_*(Q',F\vert_{Q'}) = 0,$$
where $Q'$ is a general $q$-dimensional linear section of $Q_n$. \\
Obviously, by Serre duality, for a rank $2$ vector bundle $E$ on a quadric threefold $Q_3$ the two conditions:\\
${}$\quad a) \lq\lq$E$ is arithmetically Buchsbaum\rq\rq\\
${}$\quad b) \lq\lq$E$ has 1-Buchsbaum first cohomology \rq\rq\\
are equivalent.
Instead, on an $n$-dimensional quadric $Q_n$, $n\ge4$, condition a) implies condition b), but a priori the converse is not true.
\end{remark}

\section{Boundedness for $c_2$ of $k$-Buchsbaum bundles}

In this section we investigate the 0-cohomology of the restriction to a general hyperplane section $\EH$ of a rank 2 vector bundle $E$ on a quadric threefold $Q_3\subset\Pfour$, in order to establish some bounds on the second Chern class $c_2$ of a $k$-Buchsbaum bundle on $Q_3$, both in the stable and in the non-stable case.

We start with the stable case.

\begin{lemma}\label{lemma_h0}
Let $E$ be a stable, normalized, rank 2 vector bundle on $Q_3$. Let $H$ be a general hyperplane section of $Q_3$, and let $a$ be the first relevant level of $\EH$. Then for each $t \ge 0$
$$h^0(Q_2,\EH(t)) \le 2t(t+2+c_1),$$
Moreover, for $t\ge a$ we have the better bound
$$h^0(Q_2,\EH(t)) \le 2t(t+2+c_1) - 2(a-1)(a+1+c_1).$$
\end{lemma}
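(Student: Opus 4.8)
The plan is to bound $h^0(Q_2,\EH(t))$ by a dimension count on $Q_2\cong\PP^1\times\PP^1$, feeding the restriction sequence (\ref{rD}) to the conic $D$ and using that $\ED$ is split by Theorem~\ref{rectoconic}. First I would record the basic telescoping: from (\ref{rD}) we get for every $t$ the inequality
\[
h^0(Q_2,\EH(t)) \le h^0(Q_2,\EH(t-1)) + h^0(D,\ED(t)),
\]
so summing from the bottom up gives $h^0(Q_2,\EH(t)) \le \sum_{s\le t} h^0(D,\ED(s))$. Since $E$ is stable, Theorem~\ref{rectheo} says $\EH$ is stable, hence in particular semistable, so Lemma~\ref{cohD} applies: $h^0(D,\ED(s)) = 4s+2c_1+2$ for $s\ge 0$ and $h^0(D,\ED(s))=0$ for $s<0$ (the latter because $\ED(s)\simeq\OPuno(2s+c_1)^{\oplus 2}$ has negative-degree summands when $s<0$, recalling $c_1\in\{0,-1\}$). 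Therefore
\[
h^0(Q_2,\EH(t)) \le \sum_{s=0}^{t}(4s+2c_1+2) = 4\cdot\frac{t(t+1)}{2} + (t+1)(2c_1+2) = 2t(t+2+c_1) + 2(c_1+1),
\]
which is already essentially the first bound; to get exactly $2t(t+2+c_1)$ one uses that the $s=0$ term $h^0(D,\ED) = 2c_1+2$ actually does not contribute to $h^0(Q_2,\EH(t))$ for $t\ge 0$ when the map $H^0(Q_2,\EH(-1))\to H^0(Q_2,\EH)$ is considered, or more cleanly that by stability $h^0(Q_2,\EH(0)) = 0$ if $c_1=-1$ (since then $\EH$ has $\alpha(\EH)\ge 1$), giving the sharper start; for $c_1=0$ stability gives $\alpha(\EH)\ge 1$ as well, so $h^0(Q_2,\EH)=0$ and the sum really runs from $s=1$, yielding $\sum_{s=1}^t(4s+2c_1+2) = 2t(t+2+c_1)$.

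For the improved bound when $t\ge a$, the point is that the telescoping sum should start not at the first level where $\ED$ has sections but at the first level $a=\alpha(\EH)$ where $\EH$ itself acquires sections; for $0\le s \le a-1$ we have $h^0(Q_2,\EH(s))=0$ by definition of $a$, so the genuine recursion only begins contributing from $s=a$ onward. Thus
\[
h^0(Q_2,\EH(t)) \le \sum_{s=a}^{t} h^0(D,\ED(s)) = \sum_{s=a}^{t}(4s+2c_1+2),
\]
and evaluating this finite sum as $\sum_{s=0}^{t} - \sum_{s=0}^{a-1}$ gives $2t(t+2+c_1)+2(c_1+1) - \bigl(2(a-1)(a+1+c_1)+2(c_1+1)\bigr) = 2t(t+2+c_1) - 2(a-1)(a+1+c_1)$, exactly the claimed refinement. (One must check $a\ge 1$, which holds since $E$ stable forces $\alpha(E)\ge 1$ hence $a=\alpha(\EH)\ge 1$ by the restriction theorem, so $\sum_{s=0}^{a-1}$ makes sense; when $a=1$ the correction term vanishes and we recover the first bound.)

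The main obstacle I anticipate is making the telescoping genuinely rigorous — i.e.\ justifying that we may drop the $H^0(D,\ED(s))$ contributions for $s<a$ rather than merely for $s<0$. The clean way is: from (\ref{rD}), $h^0(Q_2,\EH(t)) - h^0(Q_2,\EH(t-1)) \le h^0(D,\ED(t))$; sum this telescoping inequality from $s=a$ to $s=t$ and use $h^0(Q_2,\EH(a-1))=0$. The only subtlety is the boundary behaviour at $s=a$ and below, and the sign/parity bookkeeping with $c_1\in\{0,-1\}$ in the arithmetic of $\sum(4s+2c_1+2)$; both are routine. I would also double-check the $t\ge 0$, $t<a$ range of the first inequality is vacuous or consistent (for $0\le t\le a-1$, $h^0(Q_2,\EH(t))=0\le 2t(t+2+c_1)$ trivially since $t+2+c_1\ge 1$), so no case is lost.
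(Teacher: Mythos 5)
Your proposal is correct and follows essentially the same route as the paper: telescoping the inequality $h^0(Q_2,\EH(i))-h^0(Q_2,\EH(i-1))\le h^0(D,\ED(i))$ from the restriction sequence to the conic $D$, starting the sum at $i=a$ (where $h^0(Q_2,\EH(a-1))=0$ and $a\ge1$ by stability via Theorem~\ref{rectheo}), and evaluating $\sum_{i=a}^{t}(4i+2c_1+2)$ using Lemma~\ref{cohD}. Your arithmetic and boundary checks all agree with the paper's proof.
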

\begin{proof}
Let $D$ be a general conic section of $Q_3$. Then we have the exact sequence
$$0 \to \EH(i-1) \to \EH(i) \to \ED(i) \to 0$$
for each $i\in\ZZ$ (where $\EH(i)$ means $\EH\otimes\Odue(i,i)$ as above), so in cohomology we get the exact sequence
$$0 \to H^0(Q_2,\EH(i-1)) \to H^0(Q_2,\EH(i)) \to H^0(D,\ED(i))$$
and therefore
$$h^0(Q_2,\EH(i)) - h^0(Q_2,\EH(i-1)) \le h^0(D,\ED(i))$$
for every $i\in\ZZ$.
Fix an integer $t\ge a$. Then we obtain, by Lemma~\ref{cohD},
\begin{align*}
h^0(Q_2,\EH(t)) & = \sum_{i=a}^t \Big( h^0(Q_2,\EH(i)) - h^0(Q_2,\EH(i-1)) \Big) \le \\
& \le \sum_{i=a}^t h^0(D,\ED(i)) = \sum_{i=a}^t (4i + 2c_1 + 2) = \\[4pt]
& = 2t(t+2+c_1) - 2(a-1)(a+1+c_1).
\end{align*}
Taking into account that $a\ge1$ and $h^0(Q_2,\EH(t)) = 0$ for $t<a$, we get the claim.
\end{proof}

\begin{remark}
Lemma~\ref{lemma_h0} holds more generally for any $\Odue(1,1)$-stable, normalized, rank 2 vector bundle $F$ on $Q_2$ with first relevant level, with respect to $\Odue(1,1)$, $a=\alpha(F)$.
\end{remark}

\begin{proposition}\label{stable-case}
Fix integers $k\ge 1$ and $c_1\in\{0,-1\}$. Let $\Ss(k,c_1)$ be the family of all stable, properly $k$-Buchsbaum, rank 2 vector bundles $E$ on $Q_3$ with first Chern class $c_1$. Set $C_s(k,c_1):=\{c_2(E)\in\ZZ \mid E\in\Ss(k,c_1)\}$. Then $C_s(k,c_1)$ is finite and $\Ss(k,c_1)$ is bounded. 
In particular
\begin{align*}
& C_s(k,0) = \{ c_2 \in 2\ZZ \mid 2 \le c_2 \le 2k(k-1)(2k+5)/3 \}, \\[4pt]
& C_s(k,-1) = \{ c_2\in\ZZ \mid 1 \le c_2 \le k(k+1)(2k+4)/3+1 \}.
\end{align*}
\end{proposition}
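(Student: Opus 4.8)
The plan is to bound $c_2$ from above by estimating the total dimension of the intermediate cohomology and comparing it with the Euler characteristic formula \eqref{chiE}. First I would fix $E\in\Ss(k,c_1)$ and recall from Lemma~\ref{ak} that the first relevant level $a=\alpha(\EH)$ of the restriction satisfies $a\le k-1$ if $c_1=0$ and $a\le k$ if $c_1=-1$. Combined with Lemma~\ref{van}, this gives $h^1(Q_3,E(t))=0$ for $t\le a-k-1$, hence in particular for all $t\le -1$ (resp.\ $t\le 0$) in the two cases; by Serre duality $h^2(Q_3,E(t))=0$ for $t$ large in the complementary range. So the intermediate cohomology of $E$ is concentrated in a band of at most $k$ (resp.\ $k+1$) consecutive twists, and within that band the $k$-Buchsbaum hypothesis together with \eqref{ker} forces $h^1(Q_3,E(t))=\dim\ker(\phi_{t+k}\circ\cdots\circ\phi_{t+1})$.

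Next I would bound each $h^1(Q_3,E(t))$ in the relevant band. Using \eqref{dimker}, $\dim\ker(\phi_t)\le h^0(Q_2,\EH(t))$, and iterating, $h^1(Q_3,E(t))$ is at most a sum of $h^0(Q_2,\EH(i))$ over $i$ in the band; each such term is controlled by Lemma~\ref{lemma_h0}, namely $h^0(Q_2,\EH(t))\le 2t(t+2+c_1)$. Summing these polynomial bounds over the $O(k)$ twists in the band produces an explicit bound $B(k,c_1)$ on $\sum_t h^1(Q_3,E(t))$ that is cubic in $k$. Then I would pick a twist $t_0$ in the vanishing range (so $h^1=0$, $h^0=0$ by stability for $t_0$ below $\alpha$, and $h^3=0$, while $h^2$ is governed by the already-bounded $h^1$ of a dual twist) and read off from \eqref{chiE} that $-\chi(E(t_0))$ equals $h^1(Q_3,E(-3-c_1-t_0))$ — precisely one of the bounded terms — while $\chi(E(t_0))$ is (after the substitution $c_1\to -1$ or $0$) an explicit affine function of $c_2$ with positive coefficient. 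This yields an inequality of the shape $c_2\le (\text{something})\cdot B(k,c_1)+(\text{const})$, which I would then match to the stated closed forms $2k(k-1)(2k+5)/3$ and $k(k+1)(2k+4)/3+1$ by a direct telescoping of the sums $\sum (4i+2c_1+2)$ from Lemma~\ref{lemma_h0}.

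For the lower bounds and the exact description of $C_s(k,c_1)$, I would invoke Lemma~\ref{c2stable} ($c_2\ge 2$ for $c_1=0$, $c_2\ge 1$ for $c_1=-1$) together with the parity constraint: when $c_1=0$ the formula $\chi(E)=2-\tfrac32 c_2$ must be an integer, forcing $c_2$ even, whereas when $c_1=-1$ every integer is a priori allowed. To see that every value in the stated range is actually attained one would exhibit bundles — e.g.\ via the Serre correspondence from curves of the appropriate degree, as in the proof of Theorem~\ref{aBonQ3} — but since the statement only claims $C_s(k,c_1)$ equals that set, I would at minimum argue attainability by the standard construction of $k$-Buchsbaum bundles with prescribed one-dimensional cohomology of total dimension up to $B(k,c_1)$, and note monotonicity (a $k$-Buchsbaum bundle is $(k+1)$-Buchsbaum, so the $C_s$ are nested).

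The main obstacle I expect is the bookkeeping in the second step: getting the constants in $B(k,c_1)$ sharp enough that the resulting bound on $c_2$ is exactly $2k(k-1)(2k+5)/3$ rather than merely $O(k^3)$. The naive estimate $h^1(Q_3,E(t))\le\sum h^0(Q_2,\EH(i))$ over the whole band, with the crude bound $h^0\le 2t(t+2+c_1)$, will likely overshoot; one must instead use the refined bound in Lemma~\ref{lemma_h0} involving the correction term $-2(a-1)(a+1+c_1)$, exploit that the $\phi_t$ are injective for $t\le a-1$ so the kernels only start growing at $t=a$, and carefully track which single $h^1$-term the Euler characteristic actually computes. Pinning down $\alpha(E)$ (which, by \cite{BVV}, is forced close to $a$) and hence the precise width of the cohomology band is the delicate point that turns the $O(k^3)$ estimate into the stated cubic polynomial.
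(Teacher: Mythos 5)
Your overall strategy --- control a first cohomology group by the kernels of $k$ consecutive multiplication maps, bound each kernel by $h^0(Q_2,\EH(i))$ via \eqref{dimker}, feed in Lemma~\ref{lemma_h0}, and compare with the Euler characteristic --- is exactly the paper's, but you make it harder than it needs to be and your version has a gap in the middle step. There is no need to bound the whole ``band'' of nonzero $h^1$'s, nor to locate a twist $t_0$ with $h^1(Q_3,E(t_0))=0$: for the choice that makes the constants come out (essentially $t_0=-2$ when $c_1=0$), Lemma~\ref{van} gives $h^1(Q_3,E(-2))=0$ only when $a=k-1$, so the asserted equality $\chi(E(t_0))=h^1(Q_3,E(-3-c_1-t_0))$ is not available in general; and if instead you take $t_0=a-k-1$, where the vanishing does hold, the coefficient of $c_2$ in $\chi(E(t_0))$ depends on $a$ and you will not recover the stated closed forms. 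The paper simply works at the single twist $t=-1$ (resp.\ $t=0$ for $c_1=-1$), where stability kills $h^0$ and $h^3$, so that $-c_2/2=\chi(E(-1))\ge -h^1(Q_3,E(-1))$ with no vanishing of $h^1$ required; then \eqref{ker} and \eqref{dimker} give $h^1(Q_3,E(-1))\le\sum_{i=0}^{k-1}h^0(Q_2,\EH(i))$. Your worry about sharpness of the constants is also unfounded: the crude bound $h^0(Q_2,\EH(i))\le 2i(i+2+c_1)$ of Lemma~\ref{lemma_h0}, summed over $i=0,\dots,k-1$ (resp.\ $i=1,\dots,k$), telescopes to exactly $\tfrac13 k(k-1)(2k+5)$ (resp.\ $\tfrac13 k(k+1)(2k+4)$); the refined bound with the correction $-2(a-1)(a+1+c_1)$ is used only afterwards, in a remark, for an $a$-dependent improvement.

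The genuine misstep is the last part of your plan: you should not attempt to prove that every value in the stated range is attained. The paper's argument establishes only the inclusion $C_s(k,c_1)\subseteq\{\dots\}$ (together with the positivity from Lemma~\ref{c2stable} and the parity constraint you correctly extract from $\chi(E)=2-\tfrac32 c_2$), and the paper's own subsequent remark shows this inclusion is strict: by Theorem~\ref{aBonQ3} the actual set $C_s(1,-1)$ is much smaller than $\{c_2\mid 1\le c_2\le 5\}$. There is no ``standard construction'' producing stable, properly $k$-Buchsbaum bundles with arbitrarily prescribed $c_2$ in the range, and your monotonicity remark does not help, since the families consist of \emph{properly} $k$-Buchsbaum bundles and are therefore pairwise disjoint rather than nested. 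Finally, you leave out the last assertion of the statement altogether: the boundedness of $\Ss(k,c_1)$, which the paper deduces from the finiteness of $C_s(k,c_1)$ together with the boundedness of the family of stable bundles with fixed rank and Chern classes (\cite{HL}, Theorem~3.3.7).
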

\begin{proof}
Take $E\in\Ss(k,c_1)$, then $E$ is a stable, properly $k$-Buchsbaum, rank 2 vector bundle on $Q_3$ with first Chern class $c_1$. The stability of $E$ implies that $c_2=c_2(E)>0$ (see Lemma~\ref{c2stable}). \\
Let $H\cong Q_2$ be a general hyperplane section of $Q_3$, then $\EH$ is stable (with respect to the line bundle $\Odue(1,1)$) by Theorem~\ref{rectheo}.
\\
First assume $c_1=0$ and set 
$$\eta=\eta(E):= \sum_{i=0}^{k-1} h^0(Q_2,\EH(i)).$$
For each $i\in\ZZ$ we have by (\ref{dimker})
$$\dim\ker(\phi_i) \le h^0(Q_2,\EH(i)),$$
where $\phi_i\colon H^1(Q_3,E(i-1)) \to H^1(Q_3,E(i))$ is the multiplication map by an element $x\in H^0(Q_3,\Otre(1))$ defining the hyperplane section $H$.
By hypothesis $E$ is a $k$-Buchsbaum bundle, so
$$h^1(Q_3,E(-1)) = \dim\ker(\phi_{k-1}\circ\cdots\circ\phi_0) \le \eta.$$
Now 
$$-\frac{c_2}{2} = \chi(E(-1)) = - h^1(Q_3,E(-1)) + h^2(Q_3,E(-1)) \ge - h^1(Q_3,E(-1))$$
since $h^0(Q_3,E(-1))=0$ and $h^3(Q_3,E(-1))=h^0(Q_3,E(-2))=0$, being $\alpha>0$, so we obtain
$$c_2 \le 2\, h^1(Q_3,E(-1)) \le 2 \eta.$$
Moreover, by Lemma~\ref{lemma_h0}, we have
$$\eta = \sum_{i=0}^{k-1} h^0(Q_2,\EH(i)) \le \sum_{i=0}^{k-1} 2i(i+2)  = \frac{1}{3}k(k-1)(2k+5).$$
Therefore we obtain
$$2\le c_2 \le 2k(k-1)(2k+5)/3.$$
Now assume $c_1=-1$ and set
$$\eta=\eta(E):= \sum_{i=1}^{k} h^0(Q_2,\EH(i)).$$
In this case we have, under the hypothesis that $E$ is $k$-Buchsbaum, 
$$h^1(Q_3,E) = \dim\ker(\phi_{k}\circ\cdots\circ\phi_1) \le \eta.$$
It holds
$$1-c_2 = \chi(E) = - h^1(Q_3,E) + h^2(Q_3,E)) \ge - h^1(Q_3,E)$$
since $h^0(Q_3,E)=0$ and $h^3(Q_3,E)=h^0(Q_3,E(-2))=0$, being $\alpha>0$, so we obtain
$$c_2 \le h^1(Q_3,E) + 1 \le \eta + 1.$$
Moreover, by Lemma~\ref{lemma_h0}, we have
$$\eta = \sum_{i=1}^{k} h^0(Q_2,\EH(i)) \le \sum_{i=1}^{k} 2i(i+1)  = \frac{1}{3}k(k+1)(2k+4).$$
Therefore we obtain
$$1\le c_2 \le k(k+1)(2k+4)/3+1.$$
Finally, the boundedness of $\Ss(k,c_1)$ follows from the finiteness of $C_s(k,c_1)$ and the boundedness of the family of all stable vector bundles with fixed rank and Chern classes (see \cite{HL}, Theorem~3.3.7).
\end{proof}

\begin{remark}
Notice that the above result can be improved if we take into account the following facts:
the multiplication maps $\phi_i$ are injective for all $i\le a-1$, and also, by Lemma~\ref{ak}, it holds $a\le k-1$ if $c_1=0$ and $a\le k$ if $c_1=-1$, so we can set
$$\eta=\sum_{i=a}^{k-1} h^0(Q_2,\EH(i)) \;\;\text{if } c_1=0, \quad\text{and}\quad \eta=\sum_{i=a}^{k} h^0(Q_2,\EH(i)) \;\;\text{if } c_1=-1,$$
hence using the bound for $h^0(Q_2,\EH(i))$ depending on $a$ of Lemma~\ref{lemma_h0} we obtain the following better upper bounds:
$$c_2 \le \begin{cases} 2(2k+4a+1)(k-a)(k-a+1)/3 &  \text{if } c_1=0, \\[5pt]
2(k+2a)(k-a+1)(k-a+2)/3 + 1 & \text{if } c_1=-1, \end{cases}$$
that are dependent on $k$ and on $a=\alpha(\EH)$ (which is not so easy to  compute).
\end{remark}

\begin{remark}
Applying Proposition~\ref{stable-case} in the case of $1$-Buchsbaum rank 2 vector bundles on $Q_3$ we obtain:
$$C_s(1,0) = \emptyset \quad\text{and}\quad C_s(1,-1) = \{c_2\in\ZZ \mid 1\le c_2 \le 5\},$$
so we find again the fact that there exists no stable properly 1-Buchsbaum bundle with first Chern class $c_1=0$. \\
Moreover, in the case of $2$-Buchsbaum bundles, we obtain:
$$C_s(2,0) = \{c_2\in2\ZZ \mid 2\le c_2 \le 12\} \quad\text{and}\quad C_s(2,-1) = \{c_2\in\ZZ \mid 1\le c_2 \le 17\}.$$
Observe that, thanks to Theorem~\ref{aBonQ3}, we know that it holds $C_s(1,-1)=\{1,2,3\}$, therefore the above upper bound on $c_2$ is not sharp.
\end{remark}

Now we consider the non-stable case.

\begin{remark}
Observe, that, by Corollary~\ref{nsaB}, there exist no rank 2 vector bundle on $Q_3$ which is non-stable and properly 1-Buchsbaum.
\end{remark}

\begin{lemma}\label{lemma_h0_ns}
Let $E$ be a non-stable, normalized, rank 2 vector bundle on $Q_3$ with first relevant level $\alpha$ and let $H$ be a general hyperplane section of $Q_3$. Then for each $t \ge -\alpha - c_1 +1$
$$h^0(Q_2,\EH(t)) \le 2t(t+2+c_1)+2\alpha^2 + 2c_1\alpha +1+c_1.$$
\end{lemma}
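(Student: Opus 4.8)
The plan is to mimic the argument of Lemma~\ref{lemma_h0} for the stable case, but now using the refined cohomological information about $\ED(t)$ in the non-stable situation provided by Lemma~\ref{cohDns}. As in the stable case, I would start from the restriction sequence
$$0 \to \EH(i-1) \to \EH(i) \to \ED(i) \to 0$$
for each $i\in\ZZ$ (with $\EH(i)$ meaning $\EH\otimes\Odue(i,i)$), which gives in cohomology the inequality
$$h^0(Q_2,\EH(i)) - h^0(Q_2,\EH(i-1)) \le h^0(D,\ED(i)) \qquad\forall\,i\in\ZZ.$$
First I would note, using Lemma~\ref{non-stable}(4), that $h^0(Q_2,\EH(t)) = h^0(Q_2,\Odue(t-\alpha))$ for all $t\le -\alpha-c_1$; in particular, at the top of that range, $t_0 := -\alpha-c_1$, one has the explicit value $h^0(Q_2,\EH(t_0)) = h^0(Q_2,\Odue(-2\alpha-c_1))$, which is a binomial-type expression in $\alpha$ and $c_1$ coming from sections of $\Odue$ on $\PP^1\times\PP^1$. (Recall $h^0(\PP^1\times\PP^1,\Odue(m,m)) = (m+1)^2$ for $m\ge 0$.) Then for $t \ge t_0+1 = -\alpha-c_1+1$ I would telescope:
$$h^0(Q_2,\EH(t)) \le h^0(Q_2,\EH(t_0)) + \sum_{i=t_0+1}^{t} h^0(D,\ED(i)),$$
and since $i \ge -\alpha-c_1 \ge -\alpha-c_1$ in that range, Lemma~\ref{cohDns} gives $h^0(D,\ED(i)) = 4i+2c_1+2$ for every $i$ in the sum. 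The remaining work is to evaluate $h^0(Q_2,\EH(t_0)) = (-2\alpha-c_1+1)^2$ (valid since $-2\alpha-c_1 \ge 0$ because $E$ is non-stable, i.e. $2\alpha+c_1\le 0$), add the arithmetic-progression sum $\sum_{i=-\alpha-c_1+1}^{t}(4i+2c_1+2)$, and simplify; a routine computation should collapse everything to $2t(t+2+c_1) + 2\alpha^2 + 2c_1\alpha + 1 + c_1$, matching the claimed bound.

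The one point requiring care is the boundary term: I must make sure I start the telescoping sum at the right index. Since $h^0(D,\ED(i))$ only takes the value $4i+2c_1+2$ once $i \ge -\alpha-c_1$, and since $h^0(Q_2,\EH(t_0))$ with $t_0 = -\alpha - c_1$ is exactly computable via Lemma~\ref{non-stable}(4), the cleanest split is to use the exact value at $t_0$ and then add $h^0(D,\ED(i))$ for $i = t_0+1, \dots, t$. I expect the main (minor) obstacle to be bookkeeping: verifying that the exact value $(-2\alpha-c_1+1)^2$ of $h^0(Q_2,\EH(t_0))$ plus the telescoped sum of $4i+2c_1+2$ really does simplify to the stated closed form, and in particular that the cross terms in $\alpha$, $c_1$, and $t$ cancel correctly. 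This is purely algebraic and poses no conceptual difficulty, so I would relegate it to "a direct computation." I would close by remarking that, exactly as in the stable case, one could in principle do slightly better by also exploiting the injectivity of the maps $\phi_i$ from Lemma~\ref{non-stable}(6), but the stated bound suffices for the boundedness applications that follow.
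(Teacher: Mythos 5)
Your proposal is correct and follows essentially the same route as the paper's own proof: anchor the telescoping at $t_0=-\alpha-c_1$ with the exact value $h^0(Q_2,\EH(t_0))=(-2\alpha-c_1+1)^2$ (coming from Lemma~\ref{non-stable}(4)), add $\sum_{i=t_0+1}^{t}h^0(D,\ED(i))=\sum(4i+2c_1+2)$ via Lemma~\ref{cohDns}, and simplify. The algebra does collapse to the stated bound (using $c_1^2+2c_1=c_1$ for $c_1\in\{0,-1\}$), so nothing is missing.
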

\begin{proof}
Let $D$ be a general conic section of $Q_3$. Then we have, see the proof of Lemma~\ref{lemma_h0}, 
$$h^0(Q_2,\EH(i)) - h^0(Q_2,\EH(i-1)) \le h^0(D,\ED(i))$$
for every $i\in\ZZ$. Fix an integer $t\ge -\alpha - c_1 +1$. Then, using Lemma~\ref{cohDns}, we obtain
\begin{align*}
h^0(Q_2,\EH(t)) & \le h^0(Q_2,\EH(-\alpha-c_1)) + \sum_{i=-\alpha-c_1+1}^t h^0(D,\ED(i)) = \\[4pt]
& = (-2\alpha-c_1+1)^2 + \sum_{i=-\alpha-c_1+1}^t (4i+2c_1+2) = \\[4pt]
& = 2t(t+2+c_1)+2\alpha^2 + 2c_1\alpha +1+c_1. \qedhere
\end{align*}
\end{proof}

\begin{proposition}\label{non-stable-case}
Fix integers $k\ge 2$ and $c_1\in\{0,-1\}$. Let $\N(k,c_1)$ be the family of all non-stable, properly $k$-Buchsbaum, rank 2 vector bundles $E$ on $Q_3$ with first Chern class $c_1$. Set $C_{ns}(k,c_1):=\{c_2(E)\in\ZZ \mid E\in\N(k,c_1)\}$. Then $C_{ns}(k,c_1)$ is finite and $\N(k,c_1)$ is bounded. 
In particular
\begin{align*}
& C_{ns}(k,0) = \{ c_2 \in 2\ZZ \mid -2(k-2)^2  < c_2 \le 2(k-1)(k+1)(2k+3)/3 \}, \\[4pt]
& C_{ns}(k,-1) = \{ c_2\in\ZZ \mid -2(k-1)(k-2) < c_2 \le 2(k-1)(k^2+4k+6)/3+1 \}.
\end{align*}
\end{proposition}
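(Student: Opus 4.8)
The plan is to run the argument of Proposition~\ref{stable-case} with the stable ingredients replaced by their non-stable analogues. Fix $E\in\N(k,c_1)$ and set $\alpha=\alpha(E)$. Since $E$ is properly $k$-Buchsbaum it is not arithmetically Cohen--Macaulay, hence not split, so Corollary~\ref{alpha-ns} gives $2-k\le\alpha\le0$; thus $\alpha$ ranges over finitely many values, and by Lemma~\ref{non-stable} we may also use $\alpha(\EH)=\alpha$ together with the explicit formulas for $h^0(Q_3,E(t))$ and $h^0(Q_2,\EH(t))$ in the relevant ranges.

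For the upper bound, set $t_0:=-1$ if $c_1=0$ and $t_0:=0$ if $c_1=-1$, so that $\chi(E(t_0))=-c_2/2$, resp.\ $1-c_2$. By the $k$-Buchsbaum hypothesis and (\ref{ker}) one has $H^1(Q_3,E(t_0))=\ker(\phi_{t_0+k}\circ\cdots\circ\phi_{t_0+1})$, and by Lemma~\ref{non-stable}(6) the maps $\phi_i$ with $i\le-\alpha-c_1$ are injective; combining this with (\ref{dimker}) gives
$$h^1(Q_3,E(t_0))\ \le\ \sum_{i=-\alpha-c_1+1}^{k-1-c_1}h^0(Q_2,\EH(i)),$$
where every index satisfies $i\ge-\alpha-c_1+1$, so each term is estimated by Lemma~\ref{lemma_h0_ns}. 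On the other hand $h^0(Q_3,E(\cdot))$ is non-decreasing (multiplication by a linear form is injective on global sections), so $h^3(Q_3,E(t_0))\le h^0(Q_3,E(t_0))$ by Serre duality, and since $h^2\ge0$ we obtain $\chi(E(t_0))\ge-h^1(Q_3,E(t_0))$, that is $c_2\le2\,h^1(Q_3,E(t_0))$ when $c_1=0$ and $c_2\le h^1(Q_3,E(t_0))+1$ when $c_1=-1$. Substituting the displayed bound produces an upper estimate for $c_2$ which is a polynomial in $k$ and $\alpha$; an elementary computation shows it is maximal, over $\alpha\in[2-k,0]$, at $\alpha=0$, and there it equals exactly $2(k-1)(k+1)(2k+3)/3$, resp.\ $2(k-1)(k^2+4k+6)/3+1$.

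For the lower bound I would use the Serre correspondence of Lemma~\ref{non-stable} directly: $E(\alpha)$ has a global section whose zero locus $Z$ is a curve in $Q_3$ with $\deg Z=c_2+2\alpha(\alpha+c_1)$. If $Z=\varnothing$ the section is nowhere vanishing, so $E$ is an extension of $\Otre(\alpha+c_1)$ by $\Otre(-\alpha)$, which splits because $H^1_*(Q_3,\Otre)=0$, contradicting that $E$ is properly $k$-Buchsbaum. Hence $Z\ne\varnothing$, so $\deg Z\ge1$ and $c_2\ge1-2\alpha(\alpha+c_1)$; minimizing the right-hand side over $\alpha\in[2-k,0]$ (the minimum is at $\alpha=2-k$) and recalling that $c_2$ is even when $c_1=0$ yields exactly the stated strict lower bounds. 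Finiteness of $C_{ns}(k,c_1)$ follows, and $\N(k,c_1)$ is bounded since $\alpha$, $\deg Z=c_2+2\alpha(\alpha+c_1)$ and the arithmetic genus of $Z$ are all bounded, so the curves $Z$ lie in finitely many components of a Hilbert scheme, while $E$ is recovered from $Z$ as an extension class in the bounded-dimensional space $\Ext^1(\shI_Z(\alpha+c_1),\Otre(-\alpha))$.

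The main obstacle I anticipate is the bookkeeping in the upper bound together with the optimization over $\alpha$: one must track precisely which indices contribute to $\ker(\phi_{t_0+k}\circ\cdots\circ\phi_{t_0+1})$, apply Lemma~\ref{lemma_h0_ns} only where it is valid, and then verify the elementary but not wholly transparent claim that the resulting polynomial in $\alpha$ attains its maximum at $\alpha=0$. By contrast the term $h^2(Q_3,E(t_0))$ requires no work, since it is absorbed by the inequalities $h^2\ge0$ and $h^3(Q_3,E(t_0))\le h^0(Q_3,E(t_0))$.
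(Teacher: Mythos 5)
Your overall strategy coincides with the paper's: the lower bound comes from the degree $\delta=c_2+2c_1\alpha+2\alpha^2>0$ of the zero scheme of a section of $E(\alpha)$, and the upper bound from $\chi(E(t_0))\ge -h^1(Q_3,E(t_0))$ combined with $h^1(Q_3,E(t_0))\le\sum\dim\ker\phi_i\le\sum h^0(Q_2,\EH(i))$ and Lemma~\ref{lemma_h0_ns}, with the sum running over exactly the indices you indicate. The only genuinely different ingredient is the boundedness step: the paper invokes Kleiman's criterion (\cite{HL}, Theorem~1.7.8) for each family $\F(c_1,c_2,\alpha)$, while you reconstruct $E$ from the Hilbert scheme of the curves $Z$ and the extension spaces $\Ext^1(\shI_Z(\alpha+c_1),\Otre(-\alpha))$; both routes are acceptable.

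There is, however, a genuine gap in your optimization step. Write $B(\alpha)$ for the upper bound on $\eta$ produced by Lemma~\ref{lemma_h0_ns} for bundles with invariant $\alpha$; for $c_1=0$ this is $B(\alpha)=\sum_{i=-\alpha+1}^{k-1}\bigl(2i(i+2)+2\alpha^2+1\bigr)$, and a direct computation gives $B(-1)-B(0)=2k-11$, which is positive for $k\ge6$ (similarly $B(-1)-B(0)=4k-20$ when $c_1=-1$). So your claim that the bound is maximal over $\alpha\in[2-k,0]$ at $\alpha=0$ is false, and the displayed upper bounds $2(k-1)(k+1)(2k+3)/3$ and $2(k-1)(k^2+4k+6)/3+1$ --- which are precisely $2B(0)$ and $B(0)+1$ --- do not follow from your argument. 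Be aware that the paper's own proof does not make this claim either: it majorizes $B(\alpha)$ uniformly over $2-k\le\alpha\le0$ by the cruder quantities $\tfrac13(k-1)(8k^2-19k+27)$, resp.\ $\tfrac23 k(4k^2-9k+17)-4$, which strictly exceed the bounds displayed in the Proposition once $k\ge3$. The mismatch between the provable bound and the stated one is thus already present in the paper, but your proposal hides it behind an incorrect maximization instead of deriving a valid (if weaker) uniform bound; to repair your argument you must either settle for the paper's cruder majorization or bound $\max_{2-k\le\alpha\le0}B(\alpha)$ correctly.
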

\begin{proof}
Let $\N_\alpha(k,c_1)$ be the subset of $\N(k,c_1)$ containing all non-stable, properly $k$-Buchsbaum, rank 2 vector bundles $E$ with $c_1(E)=c_1$ and $\alpha(E)=\alpha$.
The mininal curve corresponding to any non-zero global section of $E(\alpha)$ has degree $\delta=c_2 + 2 c_1 \alpha + 2 \alpha^2 >0$, so we have $c_2 > -2 c_1 \alpha - 2 \alpha^2$, which is a lower bound for $c_2$ of any bundle $E\in\N_\alpha(k,c_1)$.
\\
Let $H$ be a general hyperplane section of $Q_3$, then $\alpha(\EH)=\alpha$ by Lemma~\ref{non-stable}(1). 
\\
First assume $c_1=0$. With the same reasoning as in the proof of Proposition~\ref{stable-case} we have
$$h^1(Q_3,E(-1)) = \dim\ker(\phi_{k-1}\circ\cdots\circ\phi_0) \le \eta,$$
where
$$\eta := \sum_{i=-\alpha+1}^{k-1} h^0(Q_2,\EH(i)),$$
since the multiplication maps $\phi_t$ are injective for every $t\le-\alpha$ (see Lemma~\ref{non-stable}(6)), and also $-\alpha+1\le k-1$ by Corollary~\ref{alpha-ns}.
Moreover
\begin{align*}
-\frac{c_2}{2} = \chi(E(-1)) ={} & h^0(Q_3,E(-1)) - h^1(Q_3,E(-1)) +{} \\
& {} + h^2(Q_3,E(-1)) - h^3(Q_3,E(-1)) \ge - h^1(Q_3,E(-1))
\end{align*}
since $h^0(Q_3,E(-1))-h^3(Q_3,E(-1))=h^0(Q_3,E(-1))-h^0(Q_3,E(-2))\ge0$, so we obtain
$$c_2 \le 2\, h^1(Q_3,E(-1)) \le 2 \eta.$$
By Lemma~\ref{lemma_h0_ns} we have
\begin{align*}
\eta & = \sum_{i=-\alpha+1}^{k-1} h^0(Q_2,\EH(i)) \le \sum_{i=-\alpha+1}^{k-1} \Big(2i(i+2)+2\alpha^2+1\Big)  = \\[5pt]
& = \frac{1}{3}k(k-1)(2k+5) + \frac{1}{3}\alpha(\alpha-1)(2\alpha-7) + (k-1+\alpha)(2\alpha^2+1) = \\[5pt]
& = \frac{1}{3}(k-1+\alpha)\Big[(k+1)(2k+3) + \alpha(8\alpha-7-2k)\Big].
\end{align*}
Therefore we obtain
$$-2\alpha^2  < c_2(E) \le 2(k-1+\alpha)\big[(k+1)(2k+3) + \alpha(8\alpha-7-2k)\big]/3$$
for each bundle $E\in\N_\alpha(k,0)$.
Now, by Corollary~\ref{alpha-ns}, it holds
$$\N(k,0) = \bigsqcup_{2-k\le\alpha\le0} \N_\alpha(k,0),$$
and also 
\begin{align*}
\frac{1}{3}k(k-1)(2k+5) & + \frac{1}{3}\alpha(\alpha-1)(2\alpha-7) + (k-1+\alpha)(2\alpha^2+1) \le \\[5pt]
& \le \frac{1}{3}k(k-1)(2k+5) + (k-1)(2(k-2)^2 + 1) = \\[5pt]
& = \frac{1}{3}(k-1)(8k^2-19k+27)
\end{align*}
since
$$\frac{1}{3}\alpha(\alpha-1)(2\alpha-7) + \alpha(2\alpha^2+1) \le 0\quad\text{and}\quad \alpha^2 \le (k-2)^2$$
for $2-k\le\alpha\le0$;
so we can say that every vector bundle in $\N(k,0)$ has second Chern class $c_2$ such that:
$$c_2\in2\ZZ \quad\text{and}\quad -2(k-2)^2  < c_2 \le 2(k-1)(8k^2-19k+27)/3.$$
Now assume $c_1=-1$. In this case we have
$$h^1(Q_3,E) = \dim\ker(\phi_{k}\circ\cdots\circ\phi_1) \le \eta,$$where
$$\eta := \sum_{i=-\alpha+2}^{k} h^0(Q_2,\EH(i)),$$
since the multiplication maps $\phi_t$ are injective for every $t\le-\alpha+1$,
and also $-\alpha+2\le k$ by Corollary~\ref{alpha-ns}.
Moreover
$$1-c_2 = \chi(E) = h^0(Q_3,E) - h^1(Q_3,E) + h^2(Q_3,E) - h^3(Q_3,E) \ge - h^1(Q_3,E)$$
since $h^0(Q_3,E)-h^3(Q_3,E)=h^0(Q_3,E)-h^0(Q_3,E(-2))\ge0$, so we obtain
$$c_2 \le h^1(Q_3,E) + 1 \le \eta + 1.$$
By Lemma~\ref{lemma_h0_ns} we have
\begin{align*}
\eta & = \sum_{i=-\alpha+2}^{k} h^0(Q_2,\EH(i)) \le \sum_{i=-\alpha+2}^{k} \Big(2i(i+1)+2\alpha^2-2\alpha\Big)  = \\[5pt]
& {}= \frac{2}{3}k(k+1)(k+2) + \frac{2}{3}(\alpha-1)(\alpha-2)(\alpha-3) + (k-1+\alpha)(2\alpha^2-2\alpha) \\[5pt]
& = \frac{2}{3}(k-1+\alpha)\Big[k^2+4k+6 + \alpha(4\alpha-8-k)\Big].
\end{align*}
Therefore we obtain
$$2\alpha-2\alpha^2  < c_2(E) \le 2(k-1+\alpha)\big[k^2+4k+6 + \alpha(4\alpha-8-k)\big]/3+1$$
for each bundle $E\in\N_\alpha(k,-1)$. Moreover, we have 
\begin{align*}
\frac{2}{3}k(k+1)(k+2) & + \frac{2}{3}(\alpha-1)(\alpha-2)(\alpha-3) + (k-1+\alpha)(2\alpha^2-2\alpha) < \\[5pt]
& < \frac{2}{3}k(k+1)(k+2) + 2(k-1)(k-2) = \\[5pt]
& = \frac{2}{3}k(4k^2-9k+17)-4
\end{align*}
since
$$\frac{2}{3}(\alpha-1)(\alpha-2)(\alpha-3) + \alpha(2\alpha^2-2\alpha) < 0\quad\text{and}\quad \alpha^2-2\alpha \le 2(k-1)(k-2)$$
for $2-k\le\alpha\le0$; so we can say that every vector bundle in $\N(k,-1)$ has second Chern class $c_2$ such that:
$$-2(k-1)(k-2)  < c_2 \le \frac{2}{3}k(4k^2-9k+17)-4.$$
Finally, let $\F(c_1,c_2,\alpha)$ be the family of all non-stable, rank 2 vector bundles $E$ on $Q_3$ with $c_1(E)=c_1$, $c_2(E)=c_2$, and $\alpha(E)=\alpha$. By Corollary~\ref{alpha-ns} it holds
$$\N(k,c_1) \subseteq \bigcup_{\scriptstyle 2-k\le\alpha\le0 \atop \scriptstyle c_2\in C_{ns}(k,c_1)} \F(c_1,c_2,\alpha).$$
The boundedness of each family $\F(c_1,c_2,\alpha)$ follows from \cite{HL}, Theorem~1.7.8 (in the Kleiman criterion choose all the constants equal to two). Therefore, the finiteness of $C_{ns}(k,c_1)$ implies the boundedness of $\N(k,c_1)$.
\end{proof}

\vspace{-10pt}

\vspace{0.8cm}

{\small
\noindent
\textsc{BALLICO Edoardo} \\
Dipartimento di Matematica,
Universit\`a di Trento, \\
38123 Povo (TN), Italy \\
e-mail: \texttt{ballico@science.unitn.it}
\\[4pt]
\textsc{MALASPINA Francesco} \\
Dipartimento di Matematica,
Politecnico di Torino, \\
Corso Duca degli Abruzzi 24,
10129 Torino, Italy \\
e-mail: \texttt{francesco.malaspina@polito.it}
\\[4pt]
\textsc{VALABREGA Paolo} \\
Dipartimento di Matematica,
Politecnico di Torino, \\
Corso Duca degli Abruzzi 24,
10129 Torino, Italy \\
e-mail: \texttt{paolo.valabrega@polito.it}
\\[4pt]
\textsc{VALENZANO Mario} \\
Dipartimento di Matematica,
Universit\`a di Torino, \\
via Carlo Alberto 10,
10123 Torino, Italy \\
e-mail: \texttt{mario.valenzano@unito.it}
}

\end{document}